\newtheorem{theorem}{Theorem}[section]
\newtheorem{corollary}{Corollary}[theorem]
\newtheorem{lemma}[theorem]{Lemma}
\newtheorem{proposition}[theorem]{Proposition}
\title{$\gamma$-matrices: a new class of simultaneously diagonalizable matrices}
\author{Antonio Boccuto, Ivan Gerace, Valentina Giorgetti
and Federico Greco\\ \\
Laboratorio di Matematica Computazionale  
``Sauro Tulipani''\\
Dipartimento di Matematica e Informatica\\
Universit\`a degli Studi di Perugia\\
Via Vanvitelli, 1  \\
I-06123 Perugia (Italy)\\ \\
\includegraphics[height=0.15\textwidth]{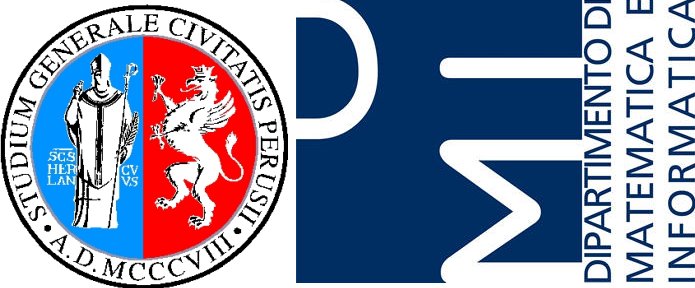}
\includegraphics[width=0.15\textwidth]{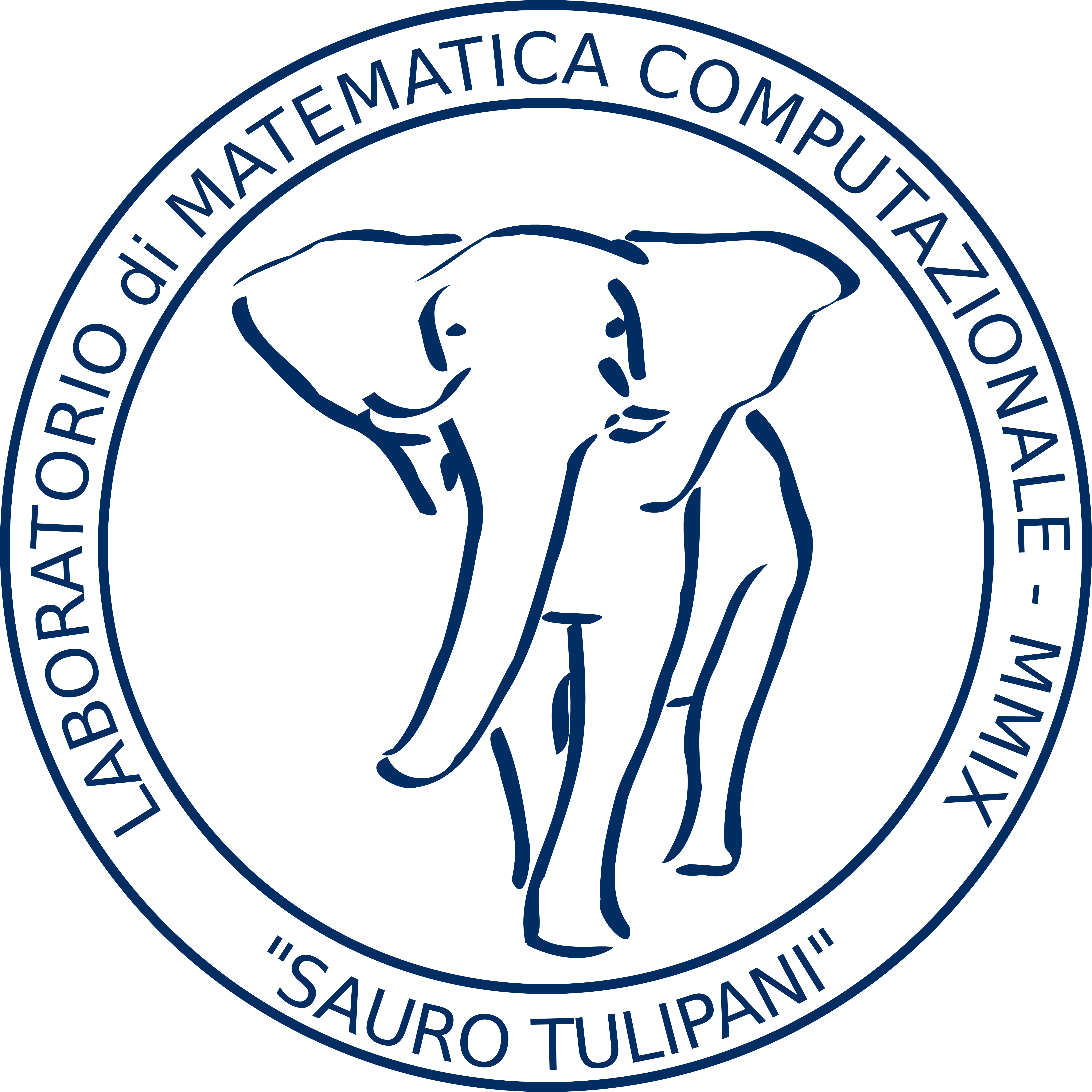}}
\begin{document}
\maketitle

%\title{$\gamma$-matrices: a new class of simultaneously
%diagonalizable matrices
% mcgplrcg
%A special class of simultaneously diagonalizable matrices:
%structural, computational and preconditioning properties

\begin{abstract} 
In order to precondition Toeplitz systems, 
we present a new class of 
simultaneously diagonalizable real matrices,
the $\gamma$-ma\-tri\-ces, which
include both symmetric circulant matrices and a subclass of the set 
of all reverse circulant matrices.
We define some algorithms 
for fast computation
of the product between a $\gamma$-matrix
and a real vector 
and between two $\gamma$-matrices. 
%and we proved
%that the computational cost of a multiplication
%between a $\gamma$-matrix and a real vector is of at most
%$\frac74 \, n \, \log_2 n+o( n \, \log_2 n)$ additions and
%$\frac12 \, n \, \log_2 n+o( n \, \log_2 n)$ multiplications, 
%and the computational cost of a multiplication
%between two $\gamma$-matrices is at most
%$\frac92 \, n \, \log_2 n+o( n \, \log_2 n)$ additions and
%$\frac32 \, n \, \log_2 n+o( n \, \log_2 n)$ multiplications.
Moreover, we illustrate a technique of approximating a real symmetric
Toeplitz matrix by a $\gamma$-matrix, and we show that
the eigenvalues of the preconditioned matrix are clustered 
around zero with the exception of at most a finite number 
of terms.
%Insert your abstract here. Include keywords, PACS and mathematical
%subject classification numbers as needed.
%\keywords{Circulant matrix \and Toeplitz matrix \and
%Discrete Fast Fourier Transform \and Preconditioning}
% \PACS{PACS code1 \and PACS code2 \and more}
% \subclass{65F08 \and  65T50 \and 15A12 \and 15B05}
\end{abstract}

\section{Introduction}
\label{intro}
Toeplitz-type linear systems arise from numerical approximation of 
differential equations (see, e.g., \cite{bertacciniomega}, 
\cite{DISCEPOLI}). Moreover,
in restoration of blurred images, it is often dealt with 
Toeplitz matrices (see, e.g., \cite{bgm}, \cite{bgp},
\cite{dellacqua}, \cite{donatelliserra}).
The related linear system is often ill-conditioned. Thus, 
preconditioning the system turns out be very useful to 
have stability of the result. In particular, if the linear 
system is solved by means of the 
conjugate gradient method, the preconditioning technique
allows to grow the speed of convergence.

Recently, there have been several studies about simultaneously diagonalizable real matrices.
In particular, in investigating the preconditioning of 
Toeplitz matrices (see, e.g., \cite{DFZ}), 
several matrix algebras 
are considered, among which
the class of all circulant matrices
(see, e.g., \cite{bertaccinicirc}, \cite{CHAN}, \cite{CHANetal}, 
\cite{CHANSTRANG}, \cite{DYKES}, \cite{gpcdm},
\cite{xiao}, \cite{osele}), the family of 
all $\omega$-circulant matrices (see, e.g., 
\cite{bertacciniomega}, \cite{FISCHER}),
the set of all 
$\tau$-matrices 
(see, e.g., \cite{BINIDIBENEDETTO}) and
the family of all matrices
diagonalized by the Hartley transform
(see, e.g., \cite{arico}, \cite{BINIFAVATI1993}, \cite{bortol}).
%the Hessenberg algebras
%(see, e.g., \cite{}).

In this paper we investigate a particular class of 
simultaneously diagonalizable real matrices,
whose elements we call \emph{$\gamma$-matrices}. Such a class,
similarly as that of the matrices diagonalizable by 
the Hartley transform (see, e.g., \cite{BINIFAVATI1993})
%\cite{bortol}), 
includes symmetric circulant matrices and a subclass of
the family of all reverse circulant matrices.
Symmetric circulant matrices have several applications 
to ordinary and partial differential equations 
(see, e.g., \cite{evans}, \cite{gilmour},
\cite{gyori}, \cite{gyori2}), images and signal restoration 
(see, e.g., \cite{carras}, \cite{henriques}), 
graph theory (see, e.g., \cite{CGV},
\cite{DISCEPOLI}, \cite{geracegreco}, \cite{gg},
\cite{gutekunst},
\cite{gutekunst2021}).
Reverse circulant matrices have different applications,
for instance in exponential data fitting and signal processing 
(see, e.g., \cite{andrecut},
\cite{1}, \cite{4}, \cite{16}, \cite{18}).

We first analyze both spectral and structural
properties of this family of matrices. 
Successively, we deal with the problem of real
fast transform algorithms 
(see, e.g., \cite{ahmed}, \cite{arico}, \cite{bergland},
\cite{duhamel}, \cite{ersoy},
\cite{gupta}, \cite{heinig}, 
\cite{martens}, \cite{murakami},
\cite{SCARABOTTI}, \cite{strang}). In particular
we define some algorithms for fast computation
of the product between a $\gamma$-matrix
and a real vector 
and between two $\gamma$-matrices. 
We show that the computational cost of a multiplication
between a $\gamma$-matrix and a real vector is of at most
$\frac74 \, n \, \log_2 n+o( n \, \log_2 n)$ additions and
$\frac12 \, n \, \log_2 n+o( n \, \log_2 n)$ multiplications, 
while the computational cost of a multiplication
between two $\gamma$-matrices is at most
$\frac92 \, n \, \log_2 n+o( n \, \log_2 n)$ additions and
$\frac32 \, n \, \log_2 n+o( n \, \log_2 n)$ multiplications.
Finally, we present a technique of approximating a real symmetric
Toeplitz matrix by a $\gamma$-matrix, and we show that
the eigenvalues of the preconditioned matrix are clustered 
around zero with the exception of at most a finite number 
of terms.

In Section 2 we investigate spectral properties of 
$\gamma$-matrices, in Section 3 we deal with 
structural properties, in Section 4 we define the real
fast transforms by means of which it is possible to
implement the product between 
a $\gamma$-matrix and a real vector.
In Section 5 we use the previously defined transforms
to implement the multiplication between two
$\gamma$-matrices. In Section 6 we deal with
the problem of preconditioning a real symmetric
Toeplitz matrix by a $\gamma$-matrix.

% For one-column wide figures use
%%\begin{figure}
% Use the relevant command to insert your figure file.
% For example, with the graphicx package use
%%  \includegraphics{example.eps}
% figure caption is below the figure
%%\caption{Please write your figure caption here}
%%\label{fig:1}       % Give a unique label
%%\end{figure}
%
% For two-column wide figures use
%%\begin{figure*}
% Use the relevant command to insert your figure file.
% For example, with the graphicx package use
%%  \includegraphics[width=0.75\textwidth]{example.eps}
% figure caption is below the figure
%%\caption{Please write your figure caption here}
%%\label{fig:2}       % Give a unique label
%%\end{figure*}
%
% For tables use
%%\begin{table}
% table caption is above the table
%%\caption{Please write your table caption here}
%%\label{tab:1}       % Give a unique label
% For LaTeX tables use
%%\begin{tabular}{lll}
%%\hline\noalign{\smallskip}
%%first & second & third  \\
%%\noalign{\smallskip}\hline\noalign{\smallskip}
%%number & number & number \\
%%number & number & number \\
%%\noalign{\smallskip}\hline
%%\end{tabular}
%%\end{table}

%%%\section{Some classes of simultaneously diagonalizable matrices}
\section{Spectral characterization of $\gamma$-matrices} 
%%Given $A \in {\mathbb{C}}^{n \times n}$, 
%%we denote by $A^*$ the transpose conjugate of $A$.
%For every $A_1$, $A_2\in {\mathbb{C}}^{n \times n}$,  
%the \emph{Frobenius inner product}
%$\langle A_1, A_2\rangle$ is defined by
%\[
%\langle A_1,A_2\rangle \, = \, tr(A_1^* A_2),
%\]
%where the symbol $tr(A)$ denotes the trace of the 
%matrix $A$. The space
%$({\mathbb{C}}^{n \times n}$, 
%$\langle \cdot, \cdot \rangle)$ is a Hilbert 
%space (see also \cite{MARTINEZ}). 
%The \emph{orthogonal sum} (or \emph{direct sum}) 
%of linear subspaces of ${\mathbb{C}}^{n \times n}$
%(with respect to $\langle \cdot, \cdot \rangle$) 
%is indicated by the symbol $\oplus$.

To present a new class of simultaneously
diagonalizable matrices, we first define the following 
matrix. Let $n$ be a fixed positive integer, and
$Q_n=(q^{(n)}_{k,j})_{k,j}$, $k$, $j=0$, $1, \ldots, n-1$, where
%be an $n \times n$-matrix, whose 
%entries are given by

\begin{eqnarray}\label{prestartingmatrix}
q^{(n)}_{k,j}=\left\{ \begin{array}{ll}
\alpha_j \, \cos \Bigl( \dfrac{2 \pi \, k \,  j}{n} \Bigr) & \text{if  } 
0 \leq j \leq \lfloor n/2 \rfloor, \\  \\ \alpha_j \,
\sin \Bigl( \dfrac{2 \pi \, k \, (n-j)}{n} \Bigr) & \text{if  }
\lfloor n/2 \rfloor \leq j \leq n-1 ,
\end{array} \right.
\end{eqnarray}
\begin{eqnarray}\label{alphak}
\alpha_j= \left\{
\begin{array}{ll}
\dfrac{1}{\sqrt{n}}=\overline{\alpha}
 & \text{if  } j=0, \text{  or   }j=n/2 
\text{   if   }n \text{   is  even,  }
\\  \\\sqrt{\dfrac{2}{n}}= \widetilde{\alpha} & \text{otherwise,}
\end{array} \right. \, \, 
\end{eqnarray}
and put  
\begin{eqnarray}\label{qn}
Q_n=
%\Bigl(  {\mathbf{u}}^{(0)} \, \Bigr| \, 
%{\mathbf{u}}^{(1)} \Bigl|  \, \cdots \, \Bigr| \, 
%{\mathbf{u}}^{(\lfloor \frac{n}{2} \rfloor)} \, \Bigl| \,
%{\mathbf{v}}^{(\lfloor \frac{n-1}{2} \rfloor)} \, \Bigr| \, 
%\, \cdots \, \Bigl| \, {\mathbf{v}}^{(2)} \, \Bigr|  {\mathbf{v}}^{(1)}
%\Bigr)= \nonumber \\ &=&
\Bigl(  {\mathbf{q}}^{(0)} \, \Bigr| \, 
{\mathbf{q}}^{(1)} \Bigl|  \, \cdots \, \Bigr| \, 
{\mathbf{q}}^{(\lfloor \frac{n}{2} \rfloor)} \, \Bigl| \,
{\mathbf{q}}^{(\lfloor \frac{n+1}{2} \rfloor)} \, \Bigr| \, 
\, \cdots \, \Bigl| \, {\mathbf{q}}^{(n-2)} \, \Bigr|  {\mathbf{q}}^{(n-1)}
\Bigr),
\end{eqnarray}
where \begin{eqnarray}\label{35}   {\mathbf{q}}^{(0)}= \dfrac{1}{\sqrt{n}} \, \Bigl( 1 \, \, 
1 \, \, \cdots \, \, 1 \Bigr)^T =\dfrac{1}{\sqrt{n}} \,
{\mathbf{u}}^{(0)},
\end{eqnarray}
\begin{eqnarray}\label{yjzj} {\mathbf{q}}^{(j)}&=& \sqrt{\dfrac{2}{n}} \, \left(   1 \, \, \, \, 
\cos \left( \dfrac{2 \pi j}{n}\right) \, 
\cdots \,  \cos \left( \dfrac{2\pi j(n-1)}{n}\right) \, 
\right)^T =\sqrt{\dfrac{2}{n}} \,
{\mathbf{u}}^{(j)},  \nonumber \\  {\mathbf{q}}^{(n-j)}
 &=&
\sqrt{\dfrac{2}{n}} \, \left(   0  \, \,
\sin \left( \dfrac{2 \pi j}{n}\right) \, 
\cdots \, \sin \left( \dfrac{2 \pi j(n-1)}{n}\right) \, 
\right)^T =\sqrt{\dfrac{2}{n}} \,
{\mathbf{v}}^{(j)},  
\end{eqnarray} $j=1$, $2, \ldots, \lfloor \frac{n-1}{2} \rfloor$.
Moreover, when $n$ is even, 
%the eigenvector of $C$ related to 
%$\lambda_{n/2}$ is 
set \begin{eqnarray}\label{45}
 {\mathbf{q}}^{(n/2)}=\dfrac{1}{\sqrt{n}}
\Bigl( 1  \, -1 \, \, \, \, 1 \, -1 \, \cdots \, -1\Bigr)^T=
\dfrac{1}{\sqrt{n}} \,
{\mathbf{u}}^{(n/2)}.
\end{eqnarray}
%Now, let $Q=(q_{k,j})_{k,j}$ be the matrix whose 
%entries $q_{k,j}$, $k$, $j \in \{0$, $1, \ldots, n\}$ 
%are given by  
%\begin{eqnarray}\label{prestartingmatrix22}
%q_{k,j}=\left\{ \begin{array}{ll} \dfrac{1}{\sqrt{n}} \,
%\cos \Bigl( \dfrac{2 \pi k j }{n} \Bigr) & \text{if  }
%j=0, \text{   or   } j=\dfrac{n}{2} 
%\text{   when   } n \text{   is even,} \\  \\
%\sqrt{\dfrac{n}{2}}
%\, \cos \Bigl( \dfrac{2 \pi k j }{n} \Bigr) & \text{if  }
%1 \leq j < \dfrac{n}{2},
%\\ \\  \sqrt{\dfrac{n}{2}} \,
%\sin \Bigl( \dfrac{2 \pi (n-k) j }{n} \Bigr) & 
%\text{otherwise.  } 
%\end{array} \right.
%\end{eqnarray}
% and $Q$ is orthogonal 
%(see, e.g., \cite[Lemma 3]{CINESI}).
In \cite{CINESI} it is proved that
all columns of $Q_n$ are orthonormal,
%(see Theorem \ref{Theorem3.2}), 
and thus $Q_n$ is an orthonormal matrix.

Now we define the following function.
Given 
%a vector 
$\bm{\lambda} \in \mathbb{C}^n$,
 $\bm{\lambda}= (\lambda_0 \, \lambda_1 \cdots 
\lambda_{n-1})^T$, set 
\begin{eqnarray*}
\text{diag}(\bm{\lambda})=\Lambda=\begin{pmatrix} 
\lambda_0 & 0 & 0 &\ldots & 0 & 0 \\
0 & \lambda_1 & 0 &\ldots & 0 & 0 \\
0 & 0 & \lambda_2 &\ldots & 0 & 0\\
\vdots & \vdots & \vdots &\ddots & \vdots & \vdots \\
0 & 0 & 0 &\ldots & \lambda_{n-2} & 0 \\
0 & 0 & 0 &\ldots & 0 & \lambda_{n-1}
\end{pmatrix},
\end{eqnarray*}
where $\Lambda \in {\mathbb{C}}^{n \times n}$ is a diagonal matrix.

A vector $\bm{\lambda} \in \mathbb{R}^n$, $\bm{\lambda}= (\lambda_0 \, \lambda_1 \cdots 
\lambda_{n-1})^T$ is said to be \emph{symmetric}
(resp., \emph{asymmetric}) iff 
$\lambda_j=\lambda_{n-j}$ (resp., $\lambda_j=-\lambda_{n-j}) \in 
\mathbb{R}$ for every $j=0$, $1,\ldots, \lfloor n/2 \rfloor$.

Let $Q_n$ be as in (\ref{qn}), and
${\mathcal G}_n$ be the space of the matrices
\emph{simultaneously diagonalizable} by $Q_n$, that is 
%all elements of ${\mathbb{C}}^{n \times n}$, diagonalizable by $Q$,
\begin{eqnarray*}
{\mathcal G}_n= {\rm{sd  } } (Q_n) = \{ Q_n \Lambda Q_n^T: \Lambda 
=\text{diag}(\bm{\lambda}),  \, \bm{\lambda} \in \mathbb{R}^n\}.
\end{eqnarray*}

A matrix belonging to ${\mathcal G}_n$, $n \in \mathbb{N}$,
is called
\emph{$\gamma$-matrix}.
Moreover, we define the following classes by 
\begin{eqnarray*}\label{Sn}
{\mathcal C}_n = \{ Q_n \Lambda Q_n^T: \Lambda 
=\text{diag}(\bm{\lambda}), 
\, \bm{\lambda} \in \mathbb{R}^n, \, \bm{\lambda} \text{  is  symmetric}\},
\end{eqnarray*}
\begin{eqnarray*}\label{Bn}
{\mathcal B}_n = \{ Q_n \Lambda Q_n^T: \Lambda 
=\text{diag}(\bm{\lambda}), \, \bm{\lambda} \in \mathbb{R}^n,\, \bm{\lambda} \text{  is  asymmetric}\},
%ripristinare il punto mcgplrcg
\end{eqnarray*}
%that ${\mathcal G}_n$ is an algebra, containing 
%${\mathcal C}_n$.1
%RIPRISTINARE IL COMMENT PER L'ARTICOLO mcgplrcg
%e nell'articolo, fare una presentazione molto piu' discorsiva 
%mcgplrcg
%%%%%\begin{comment}
\begin{eqnarray*}\label{Dn}
{\mathcal D}_n &=& \{ Q_n \Lambda Q_n^T: \Lambda 
=\text{diag}(\bm{\lambda}), \, \bm{\lambda} \in \mathbb{R}^n,
\, \bm{\lambda} \text{  is  symmetric}, \\ && 
\lambda_0=0, \lambda_{n/2}=0 \text{    if  } n \text{  is  even}\},
\end{eqnarray*}
\begin{eqnarray*}\label{En}
{\mathcal E}_n &=& \{ Q_n \Lambda Q_n^T: \Lambda 
=\text{diag}(\bm{\lambda}), \, \, \bm{\lambda} \in \mathbb{R}^n, \lambda_j=0,
\, \, j=1,\ldots, n-1, \\ && 
j \neq n/2 \text{    when  } n \text{  is  even}\}.
\end{eqnarray*}
%%%%%\end{comment} mcgplrcg
%The following results hold (see \cite{viXra}). mcgplrcg
%It is possible to see that ${\mathcal G}_n$ is a matrix algebra of dimension $n$,
%${\mathcal C}_n$ is a subalgebra of ${\mathcal G}_n$
%of dimension $\lfloor \frac{n}{2} \rfloor+1$, and
%${\mathcal B}_n$ is a linear subspace 
%of ${\mathcal G}_n$ with
%dimension $\lfloor \frac{n-1}{2}\rfloor$ (see \cite{viXra}).
\begin{proposition}\label{Proposition3.4}
%Let ${\mathcal G}_n$ be the set of all   
%matrices $C \in {\mathbb{C}}^{n \times n}$ such that there is a 
%diagonal matrix $\Lambda \in {\mathbb{C}}^{n \times n}$ with 
%$A=Q\Lambda Q^T$. 
%Then 
The class ${\mathcal G}_n$ is a matrix algebra of dimension $n$.
\end{proposition}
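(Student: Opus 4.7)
The plan is to exploit in the most direct way the already-noted fact that $Q_n$ is orthonormal, i.e. $Q_n^T Q_n = Q_n Q_n^T = I_n$. With this identity in hand, every algebraic closure property reduces to the corresponding statement about diagonal matrices, which is immediate.

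First I would verify that $\mathcal{G}_n$ is a linear subspace of $\mathbb{R}^{n \times n}$. For any $\boldsymbol{\lambda}, \boldsymbol{\mu} \in \mathbb{R}^n$ and $c \in \mathbb{R}$, one has $Q_n \,\text{diag}(\boldsymbol{\lambda})\, Q_n^T + c\, Q_n\, \text{diag}(\boldsymbol{\mu})\, Q_n^T = Q_n\, \text{diag}(\boldsymbol{\lambda} + c\boldsymbol{\mu})\, Q_n^T \in \mathcal{G}_n$, so closure under sum and scalar multiplication is trivial. Next, closure under multiplication is where orthonormality is used: given two elements $A = Q_n \Lambda_1 Q_n^T$ and $B = Q_n \Lambda_2 Q_n^T$ of $\mathcal{G}_n$, the product satisfies $AB = Q_n \Lambda_1 (Q_n^T Q_n) \Lambda_2 Q_n^T = Q_n (\Lambda_1 \Lambda_2) Q_n^T$, and since the product of two diagonal matrices is again diagonal with real entries, $AB \in \mathcal{G}_n$. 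The identity belongs to $\mathcal{G}_n$ because $I_n = Q_n\, I_n\, Q_n^T$. These observations together show that $\mathcal{G}_n$ is a (commutative, in fact) matrix algebra.

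For the dimension, I would consider the linear map $\Phi : \mathbb{R}^n \to \mathcal{G}_n$ defined by $\Phi(\boldsymbol{\lambda}) = Q_n\, \text{diag}(\boldsymbol{\lambda})\, Q_n^T$. By definition $\Phi$ is surjective, and injectivity follows again from orthonormality: if $\Phi(\boldsymbol{\lambda}) = 0$, then $\text{diag}(\boldsymbol{\lambda}) = Q_n^T \cdot 0 \cdot Q_n = 0$, hence $\boldsymbol{\lambda} = 0$. Thus $\Phi$ is a linear isomorphism and $\dim \mathcal{G}_n = n$.

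There is no real obstacle here; the proof is essentially a bookkeeping exercise that hinges entirely on $Q_n^T Q_n = I_n$, a fact already cited from \cite{CINESI}. The only small care to take is to confirm that diagonal matrices are closed under multiplication (so that $\Lambda_1 \Lambda_2$ is of the required form) and that the zero matrix and identity matrix are of the form $Q_n \Lambda Q_n^T$ with real diagonal $\Lambda$, both of which are immediate.
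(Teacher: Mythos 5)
Your proof is correct, and the algebra part (closure under linear combinations and products, membership of $I_n$, all reducing to the corresponding facts for diagonal matrices via $Q_n^T Q_n = I_n$) is essentially identical to the paper's. Where you genuinely diverge is the dimension count. The paper argues by contradiction: it supposes two distinct vectors $\bm{\lambda_1}\neq\bm{\lambda_2}$ give the same matrix $C$, observes that the entries of $\bm{\lambda_2}$ must then be a permutation of those of $\bm{\lambda_1}$, and reasons about which columns ${\mathbf{q}}^{(j)}$ lie in which eigenspace to force $\bm{\lambda_1}=\bm{\lambda_2}$. You instead introduce the linear map $\Phi(\bm{\lambda}) = Q_n\,\mathrm{diag}(\bm{\lambda})\,Q_n^T$, note it is surjective onto ${\mathcal G}_n$ by definition, and get injectivity in one line from $\mathrm{diag}(\bm{\lambda}) = Q_n^T\,\Phi(\bm{\lambda})\,Q_n$. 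Your route is cleaner and arguably more rigorous: the paper's eigenspace argument is somewhat informal (the claim that the order of the eigenvectors "has been established" is doing real work there), whereas your kernel computation uses nothing beyond the orthogonality of $Q_n$ that is already in play. The only content of the paper's proof that you omit is the closure of ${\mathcal G}_n$ under inverses of non-singular elements, but that is not required for ${\mathcal G}_n$ to be an algebra, so its absence is not a gap.
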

\begin{proof} 
%\end{proof}
%From \cite[Lemma 3]{CINESI} 
%it follows that 
%${\mathcal C}_n \subset {\mathcal G}_n$.
%%%%\begin{comment} mcgplrcg
We prove that ${\mathcal G}_n$ is an algebra.
Let $I_n$ be the identity $n \times n$-matrix. 
Since $Q_n$
is orthogonal, then
$Q_n I_n Q_n^T=I_n$. Hence, $I_n \in {\mathcal G}_n$.

If $C \in {\mathcal G}_n$, $C$ is non-singular, $C=Q_n\Lambda Q_n^T$ and
$\Lambda$ is diagonal,
then $C^{-1}=
Q_n \Lambda^{-1} Q_n^T$, and hence
$C^{-1} \in {\mathcal G}_n$, since $\Lambda^{-1}$ 
is diagonal. 

Moreover, if 
$C_r \in {\mathcal G}_n$, $\alpha_r \in \mathbb{R}$,
$C_r=Q_n  
\Lambda_r Q_n^T$ and
$\Lambda_r$
is diagonal,  $r=1,2$, then 
$\alpha_1 C_1+ \alpha_2
C_2= Q_n (\alpha_1 \Lambda_1 + 
\alpha_2 \Lambda_2) Q_n^T
\in {\mathcal G}_n$, since 
$\alpha_1 \Lambda_1 + \alpha_2 \Lambda_2$ 
is diagonal. Furthermore,
$C_1 C_2 = Q_n  \Lambda_1 Q_n^T 
Q_n  \Lambda_2 Q_n^T=Q_n  \Lambda_1  
\Lambda_2 Q_n^T$, since $\Lambda_1 \Lambda_2$ 
is diagonal. Therefore, ${\mathcal G}_n$ is an algebra.

Now we claim that $\dim({\mathcal G}_n)=\dim(\bm{\lambda})=n$. 
By contradiction, let $\bm{\lambda_1}\neq\bm{\lambda_2} \in 
{\mathbb{R}}^n$ be such that $Q_n$diag$(\bm{\lambda_1})Q_n^T=
Q_n$diag$(\bm{\lambda_2})Q_n^T=C$. Then, the elements of 
$\bm{\lambda_2}$ are obtained by a suitable permutation of those of
$\bm{\lambda_1}$.
Since the order of the
eigenvectors of $C$ have been established, if a component 
$\lambda_j^{(1)}$ of $\bm{\lambda_1}$ is equal to a component
$\lambda_k^{(2)}$ of $\bm{\lambda_2}$, then $\mathbf{q}^{(j)}$ 
and $\mathbf{q}^{(k)}$ belong to the same eigenspace,
and hence $\lambda_j^{(1)}=\lambda_j^{(2)}=
\lambda_k^{(1)}=\lambda_k^{(2)}$. This implies that 
$\bm{\lambda_1}=\bm{\lambda_2}$, a contradiction.
This ends the proof.
  
\end{proof}
%%%\end{comment} mcgplrcg
\begin{proposition}\label{insert} 
The class ${\mathcal C}_n$ is a subalgebra of ${\mathcal G}_n$
of dimension $\lfloor \frac{n}{2} \rfloor+1$.
\end{proposition}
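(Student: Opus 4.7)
The plan is to leverage Proposition \ref{Proposition3.4} and transfer the entire verification from the matrix algebra $\mathcal{G}_n$ to the level of diagonal vectors $\bm{\lambda} \in \mathbb{R}^n$, using the fact that the map $\Phi : \bm{\lambda} \mapsto Q_n \mathrm{diag}(\bm{\lambda}) Q_n^T$ is a linear isomorphism from $\mathbb{R}^n$ onto $\mathcal{G}_n$. Under $\Phi$, matrix sums and scalar multiples correspond to sums and scalar multiples of vectors, matrix products correspond to the Hadamard (entrywise) product of vectors (since $\Lambda_1 \Lambda_2$ is diagonal with entries $\lambda_j^{(1)} \lambda_j^{(2)}$), and the inverse of a non-singular element corresponds to the entrywise reciprocal of its diagonal vector. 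Thus $\mathcal{C}_n = \Phi(S_n)$, where $S_n \subset \mathbb{R}^n$ is the subspace of symmetric vectors, and to prove that $\mathcal{C}_n$ is a subalgebra it suffices to show that $S_n$ is closed under these four operations and contains $(1,1,\ldots,1)^T$.

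Each closure property for $S_n$ is immediate from the defining relation $\lambda_j = \lambda_{n-j}$. Scalar combinations preserve this identity componentwise; the Hadamard product satisfies $\lambda_j^{(1)} \lambda_j^{(2)} = \lambda_{n-j}^{(1)} \lambda_{n-j}^{(2)}$; reciprocals give $1/\lambda_j = 1/\lambda_{n-j}$; and the constant vector is symmetric trivially, with $\Phi$ sending it to $I_n$. So the subalgebra part reduces to these one-line verifications, and I would write them out just as in the proof of Proposition \ref{Proposition3.4}.

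For the dimension count, since $\Phi$ is a linear isomorphism, $\dim(\mathcal{C}_n) = \dim(S_n)$. A symmetric vector is uniquely determined by the entries $\lambda_0, \lambda_1, \ldots, \lambda_{\lfloor n/2 \rfloor}$, the remaining entries being forced by $\lambda_{n-j} = \lambda_j$. Exhibiting the explicit basis $\{\mathbf{e}_0\} \cup \{\mathbf{e}_j + \mathbf{e}_{n-j} : 1 \leq j \leq \lfloor (n-1)/2 \rfloor\}$, augmented by $\{\mathbf{e}_{n/2}\}$ when $n$ is even, gives $\lfloor n/2 \rfloor + 1$ linearly independent elements in both parities.

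I do not anticipate any genuine obstacle: the argument is mechanical once the vector-level reformulation is in place. The only mild care needed is to handle the parity of $n$ uniformly in the dimension count, which is absorbed neatly by the floor notation and by separating out the self-paired index $n/2$ in the even case.
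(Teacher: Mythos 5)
Your proposal is correct and takes essentially the same route as the paper: the paper likewise verifies each algebra axiom by reducing it to the corresponding componentwise check on the symmetric eigenvalue vector (sums, Hadamard products, and reciprocals of symmetric vectors are symmetric, and $\mathbf{1}$ is symmetric), and obtains the dimension by counting the $\lfloor n/2\rfloor+1$ free entries of a symmetric vector together with the injectivity of $\bm{\lambda}\mapsto Q_n\,\mathrm{diag}(\bm{\lambda})\,Q_n^T$ established in Proposition \ref{Proposition3.4}. Your explicit basis $\{\mathbf{e}_0\}\cup\{\mathbf{e}_j+\mathbf{e}_{n-j}\}$ (plus $\mathbf{e}_{n/2}$ for $n$ even) is a slightly more concrete rendering of the paper's dimension count, but not a different argument.
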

\begin{proof} 
%\end{proof}
%%%%\begin{comment}   mcgplrcg
Obviously, ${\mathcal C}_n \subset {\mathcal G}_n$.
Now we claim that ${\mathcal C}_n$ is an algebra. First, note that
$I_n \in {\mathcal C}_n$, since 
$I_n=Q_n I_n Q_n^T$ and $I_n=$diag$(\mathbf{1})$,
where $\mathbf{1}=( 1 \, \, 1 \cdots 1)^T$.
So, $I_n \in {\mathcal C}_n$, since $\mathbf{1}$ is symmetric.
%=Q_n Q_n^T=I_n$, because $Q_n$
%is orthogonal (that is, $Q_n^T=Q_n^{-1}$).

If $C \in {\mathcal C}_n$, $C$ is non-singular, $C=Q_n\Lambda Q_n^T$,
$\Lambda=$diag$(\bm{\lambda})$, $\bm{\lambda}=$ 
$(\lambda_0 \, \, \lambda_1 \cdots \lambda_{n-1})^T$ is symmetric,
then $C^{-1}=
Q_n \Lambda^{-1} Q_n^T$, and so
$C^{-1} \in {\mathcal C}_n$, as $\Lambda^{-1}
=$diag$(\bm{\lambda}^{\prime})$, and
$\bm{\lambda}^{\prime}=$ 
$( 1/\lambda_0 \, \, 1/\lambda_1 \cdots 1/\lambda_{n-1})^T$ is symmetric too. 

If 
$C_r \in {\mathcal C}_n$, $\alpha_r \in \mathbb{R}$,
$C_r=Q_n  
\Lambda_r Q_n^T$,
$\Lambda_r=$diag$(\bm{\lambda^{(r)}})$, $\bm{\lambda}^{(r)}=$ 
$(\lambda^{(r)}_0 \, \, \lambda^{(r)}_1 \cdots
\lambda^{(r)}_{n-1})^T$ is symmetric, $r=1,2$, then 
$\alpha_1 C_1+ \alpha_2
C_2= Q_n (\alpha_1 \Lambda_1 + 
\alpha_2 \Lambda_2) Q_n^T
\in {\mathcal C}_n$, since 
$\alpha_1 \Lambda_1 + \alpha_2 \Lambda_2=$diag$({\bm{\lambda}^*})$, and
${\bm{\lambda}^*}$=
$(\alpha_1 \lambda^{(1)}_0+\alpha_2 \lambda^{(2)}_0$ $\cdots$ 
$\alpha_1 \lambda^{(1)}_{n-1}+\alpha_2 \lambda^{(2)}_{n-1} )^T
$ is symmetric. Furthermore,
$C_1 C_2 = 
%Q_n  \Lambda_1 Q_n^T Q_n  \Lambda_2 Q_n^T=
Q_n  \Lambda_1  
\Lambda_2 Q_n^T$, since 
$\Lambda_1 \Lambda_2=$diag$({\bm{\lambda}_*})$, and 
${\bm{\lambda}_*}=$ $(\lambda^{(1)}_0 \, \lambda^{(2)}_0
$ $\cdots$ $\lambda^{(1)}_{n-1} \, \lambda^{(2)}_{n-1})^T$
is symmetric. Therefore, ${\mathcal C}_n$ is an algebra.

Now we prove that $\dim({\mathcal C}_n)=\lfloor \frac{n}{2} \rfloor+1$.
By the definition of ${\mathcal C}_n$,
%(\ref{Sn}), 
%since the ${\bm{\lambda}}$'s are symmetric,
it is possible to choose at most $\lfloor \frac{n}{2} \rfloor+1$ 
elements of ${\bm{\lambda}}$. The proof is analogous to that of 
the last part of Proposition \ref{Proposition3.4}.
  \end{proof}
%%%%\end{comment}  mcgplrcg
\begin{proposition}\label{bn}
The class ${\mathcal B}_n$ is a linear subspace 
of ${\mathcal G}_n$, and has
dimension $\lfloor \frac{n-1}{2}\rfloor$.
\end{proposition}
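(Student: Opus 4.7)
The plan is to mirror the arguments used in Propositions \ref{Proposition3.4} and \ref{insert}, but adapted to the fact that ${\mathcal B}_n$ is only a subspace and not a subalgebra: the componentwise product of two asymmetric vectors is symmetric, so closure under matrix multiplication genuinely fails here, which is why only a linear subspace structure is claimed. First I would verify the three subspace axioms for ${\mathcal B}_n$, and then I would compute the dimension by counting the free parameters in an asymmetric vector and appealing to the injectivity of the map $\bm{\lambda} \mapsto Q_n \, \text{diag}(\bm{\lambda}) \, Q_n^T$ established at the end of the proof of Proposition \ref{Proposition3.4}.

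For the subspace check, I would note that the zero matrix lies in ${\mathcal B}_n$ because $\bm{\lambda} = \mathbf{0}$ is trivially asymmetric. If $C_r = Q_n \, \text{diag}(\bm{\lambda}^{(r)}) \, Q_n^T \in {\mathcal B}_n$ with $\bm{\lambda}^{(r)}$ asymmetric for $r = 1, 2$, and $\alpha_1, \alpha_2 \in \mathbb{R}$, then $\alpha_1 \, C_1 + \alpha_2 \, C_2 = Q_n \, \text{diag}(\alpha_1 \, \bm{\lambda}^{(1)} + \alpha_2 \, \bm{\lambda}^{(2)}) \, Q_n^T$, and by linearity the defining condition $\lambda_j = -\lambda_{n-j}$ is preserved by this combination, so $\alpha_1 \, C_1 + \alpha_2 \, C_2 \in {\mathcal B}_n$.

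For the dimension, I would observe that the asymmetry condition evaluated at $j = 0$ (with the cyclic convention $\lambda_n \equiv \lambda_0$ that is implicit also in the definition of a symmetric vector) forces $\lambda_0 = 0$, and when $n$ is even the condition at $j = n/2$ additionally forces $\lambda_{n/2} = 0$. The components $\lambda_1, \ldots, \lambda_{\lfloor (n-1)/2 \rfloor}$ can then be chosen freely, and they determine the remaining entries via $\lambda_{n-j} = -\lambda_j$. A short parity split yields exactly $\lfloor (n-1)/2 \rfloor$ free parameters in both cases: when $n = 2m+1$ this count is $m$, and when $n = 2m$ it is $m-1$. The injectivity of $\bm{\lambda} \mapsto Q_n \, \text{diag}(\bm{\lambda}) \, Q_n^T$ then lifts this parameter count to $\dim({\mathcal B}_n) = \lfloor (n-1)/2 \rfloor$.

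I do not anticipate a serious obstacle: the subspace check is a one-line linearity argument and the dimension count is mostly bookkeeping. The only delicate point is to handle the $j=0$ boundary case in the asymmetry condition correctly (interpreting the indices cyclically, consistently with how the symmetric case is treated), together with keeping the even and odd cases of $n$ separate when extracting the final count.
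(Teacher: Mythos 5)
Your proposal is correct and follows essentially the same route as the paper: linearity of the diagonal conjugation for the subspace axioms, and a free-parameter count (with $\lambda_0=0$, and $\lambda_{n/2}=0$ for even $n$, forced by asymmetry) combined with the injectivity argument from Proposition \ref{Proposition3.4} for the dimension. Your explicit treatment of the $j=0$ boundary case and the parity split is just a more detailed write-up of the same bookkeeping the paper leaves implicit.
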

\begin{proof}
%\end{proof}
%%see ViXra
%%%%\begin{comment} mcgplrcg
First, let us prove that ${\mathcal B}_n$ is a
linear subspace of ${\mathcal G}_n$. For $r=1,2$, let 
$A_r \in {\mathcal B}_n$, $\alpha_r \in \mathbb{R}$,
$\Lambda_r=\text{diag}({\bm{\lambda^{(r)}}})$, with 
$\bm{\lambda^{(r)}}$ asymmetric,
%$\lambda^{(r)}_1 \,
%\cdots \, \lambda^{(r)}_{n-1})$ $\in {\mathbb{R}}^{n \times n}$,
%$\lambda^{(r)}_j=
%-\lambda^{(r)}_{(n-j) \bmod n}$ for all
%$j \in \{0$, $1, \ldots, n-1\}$,
and $C_r=Q_n  
\Lambda_r Q_n^T$. Then 
$\alpha_1 C_1+ \alpha_2
C_2= Q_n (\alpha_1 \Lambda_1 + 
\alpha_2 \Lambda_2) Q_n^T$
with $\alpha_1 \Lambda_1 + \alpha_2 \Lambda_2
=$diag$({\bm{\lambda}^*})$, and
${\bm{\lambda}^*}$=
$(\alpha_1 \lambda^{(1)}_0+\alpha_2 \lambda^{(2)}_0$ $\cdots$ 
$\alpha_1 \lambda^{(1)}_{n-1}+\alpha_2 \lambda^{(2)}_{n-1} )^T
$ is asymmetric.
%\text{diag}(\lambda^{(0)}_0$  
%$\lambda^{(0)}_1 \,
%\cdots \, \lambda^{(0)}_{n-1})$, where
%$\lambda^{(0)}_j= \alpha_1 \lambda^{(1)}_j+
%\alpha_2 \lambda^{(2)}_j $, $j=0, 
%1, \ldots, n-1$. 
%For such $j$'s we get
%$$\lambda^{(0)}_j=-
%\alpha_1 \lambda^{(1)}_{(n-j)\bmod n}-
%\alpha_2 \lambda^{(2)}_{(n-j)\bmod n}=
%-\lambda^{(0)}_{(n-j)\bmod n}.$$
Therefore, $\alpha_1 C_1+ \alpha_2
C_2 \in {\mathcal B}_n$. So, ${\mathcal B}_n$ is a
linear subspace of ${\mathcal G}_n$.
%Observe that, by Proposition \ref{Proposition3.4},
%%Theorem \ref{Theorem3.2}, 
%${\mathcal G}_n$ is a matrix algebra of dimension $n$
%over $\mathbb{R}$. 

Now we prove that $\dim({\mathcal B}_n)=\lfloor \frac{n-1}{2} \rfloor$.
By the definition of ${\mathcal B}_n$, 
%(\ref{Bn}), 
%since the ${\bm{\lambda}}$'s are symmetric,
it is possible to choose at most $\lfloor \frac{n-1}{2} \rfloor$ 
elements of ${\bm{\lambda}}$, because $\lambda_0=0$
and $\lambda_{n/2}=0$ when $n$ is even. The proof is analogous to that of 
the last part of Proposition \ref{Proposition3.4}.  
\end{proof}
Similarly as in Propositions \ref{Proposition3.4} and 
\ref{insert}, it is possible to prove that 
${\mathcal D}_n$ is a subalgebra
of ${\mathcal G}_n$ of
dimension $\lfloor \frac{n-1}{2}\rfloor$ and 
${\mathcal E}_n$ is a subalgebra of ${\mathcal G}_n$
of dimension $1$ when $n$ is odd and $2$ when $n$ is even.
%%%%\end{comment}  mcgplrcg
Now we prove the following
\begin{theorem}\label{directsum}
One has
\begin{eqnarray}\label{fundamental1}
{\mathcal G}_n={\mathcal C}_n \oplus {\mathcal B}_n,
\end{eqnarray} 
where $\oplus$ is the orthogonal sum, and
$\langle \cdot , \cdot \rangle $ denotes the Frobenius product, defined by
%For every $A_1$, $A_2\in {\mathbb{C}}^{n \times n}$,  
%the \emph{Frobenius inner product}
%$\langle A_1, A_2\rangle$ is defined by
\[
\langle G_1,G_2\rangle \, = \, \textrm{tr}(G_1^T G_2),
\qquad G_1, G_2\in {\mathcal G}_n,
\]
where $tr(G)$ is the trace of the 
matrix $G$.
\end{theorem}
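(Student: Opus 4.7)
The plan is to establish the two independent facts: $\mathcal{G}_n = \mathcal{C}_n + \mathcal{B}_n$ as a sum of subspaces, and $\mathcal{C}_n \perp \mathcal{B}_n$ in the Frobenius inner product. These together give the orthogonal direct sum.

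For the first fact, I would work at the level of the diagonal-entry vectors. Given any $\bm{\lambda}\in\mathbb{R}^n$ with $C=Q_n\,\text{diag}(\bm{\lambda})\,Q_n^T\in\mathcal{G}_n$, I would set
\[
\lambda_0^{s}=\lambda_0,\qquad \lambda_j^{s}=\tfrac{1}{2}(\lambda_j+\lambda_{n-j}),\qquad \lambda_j^{a}=\tfrac{1}{2}(\lambda_j-\lambda_{n-j}),\quad j=1,\ldots,n-1,
\]
and, when $n$ is even, note that $\lambda_{n/2}^{s}=\lambda_{n/2}$ and $\lambda_{n/2}^{a}=0$ automatically. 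A direct check from the definitions in the excerpt shows $\bm{\lambda}^{s}$ is symmetric, $\bm{\lambda}^{a}$ is asymmetric (in particular $\lambda_0^{a}=0$), and $\bm{\lambda}=\bm{\lambda}^{s}+\bm{\lambda}^{a}$. Hence $C=Q_n\,\text{diag}(\bm{\lambda}^{s})\,Q_n^T+Q_n\,\text{diag}(\bm{\lambda}^{a})\,Q_n^T\in\mathcal{C}_n+\mathcal{B}_n$, so $\mathcal{G}_n\subseteq\mathcal{C}_n+\mathcal{B}_n$; the reverse inclusion is immediate since both summands are contained in $\mathcal{G}_n$.

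For the orthogonality, I would take arbitrary $C_1=Q_n\Lambda_1 Q_n^T\in\mathcal{C}_n$ and $C_2=Q_n\Lambda_2 Q_n^T\in\mathcal{B}_n$ with $\Lambda_r=\text{diag}(\bm{\lambda}^{(r)})$ and use that $\Lambda_1^T=\Lambda_1$ together with the orthogonality $Q_n^TQ_n=I_n$ from the cited result in \cite{CINESI} to get
\[
\langle C_1,C_2\rangle=\mathrm{tr}(C_1^T C_2)=\mathrm{tr}(Q_n\Lambda_1\Lambda_2 Q_n^T)=\mathrm{tr}(\Lambda_1\Lambda_2)=\sum_{j=0}^{n-1}\lambda_j^{(1)}\lambda_j^{(2)}
\]
by the cyclic property of the trace. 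The term $j=0$ vanishes because $\lambda_0^{(2)}=0$, and analogously $j=n/2$ when $n$ is even. The remaining indices can be grouped in pairs $\{j,n-j\}$ with $1\le j<n/2$, each contributing $\lambda_j^{(1)}\lambda_j^{(2)}+\lambda_{n-j}^{(1)}\lambda_{n-j}^{(2)}=\lambda_j^{(1)}\lambda_j^{(2)}+\lambda_j^{(1)}(-\lambda_j^{(2)})=0$, so $\langle C_1,C_2\rangle=0$.

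Combining the two parts, orthogonality forces $\mathcal{C}_n\cap\mathcal{B}_n=\{0\}$, and the decomposition $\bm{\lambda}=\bm{\lambda}^{s}+\bm{\lambda}^{a}$ then shows the sum is direct and orthogonal, establishing (\ref{fundamental1}). As a consistency check, Propositions \ref{insert} and \ref{bn} give $\dim\mathcal{C}_n+\dim\mathcal{B}_n=\lfloor n/2\rfloor+1+\lfloor(n-1)/2\rfloor=n=\dim\mathcal{G}_n$ by Proposition \ref{Proposition3.4}, confirming the dimensions match. There is no real obstacle here: the computation is routine once one recognises that the symmetric/asymmetric decomposition of $\bm{\lambda}$ with respect to the involution $j\mapsto n-j$ is exactly what the trace pairing diagonalises.
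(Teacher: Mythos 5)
Your proposal is correct and follows essentially the same route as the paper: the same symmetric/asymmetric splitting of the eigenvalue vector $\bm{\lambda}$ with respect to $j\mapsto (n-j)\bmod n$ for the decomposition, and the same trace computation $\langle C,B\rangle=\sum_j\lambda_j^{(C)}\lambda_j^{(B)}$ with the pairing of indices $j$ and $n-j$ for orthogonality. The only cosmetic difference is that you deduce $\mathcal{C}_n\cap\mathcal{B}_n=\{O_n\}$ from orthogonality and positive definiteness of the Frobenius product, whereas the paper argues directly that a vector which is both symmetric and asymmetric must vanish; both are fine.
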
  
\begin{proof}
Observe that, to prove (\ref{fundamental1}),
%It follows that ${\mathcal B}_n$ has dimension
%$n-\lfloor n+2 \rfloor=\lfloor (n-1)/2 \rfloor$ as linear space over the complex field.
it is enough to demonstrate the following properties:
\begin{description}
    \item[{\rm \ref{directsum}.1)}] ${\mathcal C}_n \cap
{\mathcal B}_n =\{O_n\}$,
where $O_n \in {\mathbb{R}}^{n \times n}$ is the matrix 
whose entries are equal to $0$;
    \item[{\rm \ref{directsum}.2)}]
for any $G\in{\mathcal G}_n$, there exist
$C\in {\mathcal C}_n$ and 
$B \in {\mathcal B}_n$ with 
$G=C + B$;
   \item[{\rm \ref{directsum}.3)}] 
for any $C\in {\mathcal C}_n$ and 
$B \in {\mathcal B}_n$, it is $C +B \in {\mathcal G}_n$;
   \item[{\rm \ref{directsum}.4)}] 
$\langle C,B\rangle=0$ for each 
$C\in{\mathcal C}_n$ and 
$B\in{\mathcal B}_n$.
\end{description}

\ref{directsum}.1) \, \, 
Let $G\in{\mathcal C}_n \cap {\mathcal B}_n$. 
%From (\ref{tee}) and (\ref{antitee}) 
%we find $\Lambda \in {\mathbb{R}}^{n \times n}$
Then, $G=Q_n \Lambda^{(G)} Q_n^T$, where
$\Lambda^{(G)}=\text{diag}({\bm{\lambda}}^{(G)})$ and 
${\bm{\lambda}}^{(G)}$ is both symmetric and asymmetric.
But this is possible if and only if
${\bm{\lambda}}^{(G)}={\bm{0}}$, where 
${\bm{0}}$ is the vector whose components are equal to $0$.
%(\lambda_0$
%$\lambda_1 \,
%\cdots \, \lambda_{n-1})$,  
%$\lambda_j=\lambda_{(n-j)\bmod n}=
%-\lambda_{(n-j)\bmod n}$, and hence 
%$\lambda_j=-\lambda_j=0$, for all
%$j \in \{0$, $1, \ldots, n-1\}$.
Thus, $\Lambda^{(G)}=O_n$ and hence $G=O_n$. This proves
\ref{directsum}.1).

\ref{directsum}.2) \, \, 
Let $G \in {\mathcal G}_n$, 
$\Lambda^{(G)} \in {\mathbb{R}}^{n \times n}$
be such that $G=Q_n \Lambda^{(G)} Q_n^T$, 
$\Lambda^{(G)}=
$diag$({\bm{\lambda}}^{(G)})=\text{diag}(\lambda_0^{(G)}$
$\lambda_1^{(G)} \,
\cdots \, \lambda_{n-1}^{(G)})$. For $j \in \{0$, $1, \ldots, n-1\}$,
set \begin{eqnarray*}
\lambda^{(C)}_j=\dfrac{\lambda_j^{(G)}+
\lambda^{(G)}_{(n-j) \bmod
n}}{2}, \qquad \lambda^{(B)}_j= 
\dfrac{\lambda^{(G)}_j-\lambda^{(G)}_{(n-j) \bmod n}}{2}.
\end{eqnarray*}
For $r\in \{C, B\}$, set $\Lambda^{(r)}=
\text{diag}(\lambda^{(r)}_0$
$\lambda^{(r)}_1 \,
\cdots \, \lambda^{(r)}_{n-1})$, 
$C=Q_n \Lambda^{(C)} Q_n^T$ and
$B=Q_n \Lambda^{(B)} Q_n^T$.
Observe that ${\bm{\lambda}^{(G)}}=
%\end{proof}
%\end{document}
{\bm{\lambda^{(C)}}} + {\bm{\lambda^{(B)}}}$,
where ${\bm{\lambda^{(C)}}}$ is symmetric and 
${\bm{\lambda^{(B)}}}$  is asymmetric.
%$\lambda_j =\lambda^{(1)}_j + \lambda^{(2)}_j$,
%$\lambda^{(1)}_j=\lambda^{(1)}_{(n-j)\bmod n}$,
%$\lambda^{(2)}=-\lambda^{(2)}_{(n-j)\bmod n}$,
%for every $j \in \{0$, $1, \ldots, n-1\}$.
Hence, $C \in {\mathcal C}_n$ and 
$B \in {\mathcal B}_n$. This proves \ref{directsum}.2).

\ref{directsum}.3) \, \, Let $C \in {\mathcal C}_n$ and 
$B \in {\mathcal B}_n$.  For $r\in \{C, B\}$, set $\Lambda^{(r)}=
\text{diag}(\lambda^{(r)}_0$
$\lambda^{(r)}_1 \,
\cdots \, \lambda^{(r)}_{n-1})$, 
$C=Q_n \Lambda^{(C)} Q_n^T$ and
$B=Q_n \Lambda^{(B)} Q_n^T$. Note that
${\bm{\lambda^{(C)}}}$ is symmetric and 
${\bm{\lambda^{(B)}}}$  is asymmetric. We have
$C + B= Q_n ( \Lambda^{(C)}+ 
\Lambda^{(B)}) Q_n^T
\in {\mathcal G}_n$.

\ref{directsum}.4) \, \, Choose arbitrarily 
$C\in{\mathcal C}_n$ and $B\in{\mathcal B}_n$. 
For $r\in \{C, B\}$, put $\Lambda^{(r)}=
\text{diag}(\lambda^{(r)}_0$
$\lambda^{(r)}_1 \,
\cdots \, \lambda^{(r)}_{n-1})$, 
$C=Q_n \Lambda^{(C)} Q_n^T$ and
$B=Q_n \Lambda^{(B)} Q_n^T$. Observe that 
${\bm{\lambda^{(C)}}}$ is symmetric and 
${\bm{\lambda^{(B)}}}$  is asymmetric.
%
%and
%$\lambda^{(1)}_j=\lambda^{(1)}_{(n-j)\bmod n}$,
%$\lambda^{(2)}_j=-\lambda^{(2)}_{(n-j)\bmod n}$,
%for all $j \in \{0$, $1, \ldots, n-1\}$. 
In particular,
$\lambda^{(B)}_0=0$ and $\lambda^{(B)}_{n/2}=0$
when $n$ is even. Note that
%\begin{eqnarray}\label{risol}
$C^T B=
%Q_n 
%\overline
%{\Lambda_1} Q_n^T Q_n 
%\Lambda_2 Q_n^T =
Q_n 
%\overline
{\Lambda}^{(C)} \Lambda^{(B)} Q_n^T$,
%\end{eqnarray}
where 
%%\begin{eqnarray}\label{diagon}
%\overline
%{\Lambda_1}&=&
%\text{diag}(
%\overline
%{\lambda^{(1)}_0} \, \, 
%\overline
%{\lambda^{(1)}_1} \, \cdots  \, 
%\overline
%{\lambda^{(1)}_{n-1}}), \\
%\overline
${\Lambda}^{(C)} {\Lambda}^{(B)}=$diag
$({\lambda^{(C)}_0} 
\lambda^{(B)}_0 \quad 
%\overline
{\lambda^{(C)}_1} 
\lambda^{(B)}_1\, \,    
\cdots \, \,
%\overline
{\lambda^{(C)}_{n-1}} 
\lambda^{(B)}_{n-1})$.
%\nonumber
%%\end{eqnarray}
Thus, we obtain
\begin{eqnarray*}
\langle C ,B\rangle&=& \textrm{tr}(C^T B)=
\sum_{j=0}^{n-1}
%\overline
{\lambda^{(C)}_j} \lambda^{(B)}_j= 
%\end{eqnarray}
%\begin{eqnarray}\label{prehilbert} \nonumber
%%&=&\sum_{j=0}^{n-1}
%\overline
%%{\lambda^{(1)}_j} \lambda^{(2)}_j
\sum_{j=1}^{\lfloor (n-1)/2 \rfloor}
(
%\overline
{\lambda^{(C)}_j} \lambda^{(B)}_j+
%\overline
{\lambda^{(C)}_{n-j}} \lambda^{(B)}_{n-j})
= \\ &=&\sum_{j=1}^{\lfloor (n-1)/2 \rfloor}
(
%\overline
{\lambda^{(C)}_j} \lambda^{(B)}_j-
%\overline
{\lambda^{(C)}_j} \lambda^{(B)}_j)=0, \nonumber
\end{eqnarray*}
%Finally, from Propositions \ref{insert} and \ref{bn}
%%(\ref{fundamental}) 
%it follows that 
%\begin{eqnarray}\label{dimensions}
%\dim ({\mathcal B}_n) =\dim ({\mathcal G}_n)-
%\dim ({\mathcal C}_n) = 
%n -\Bigl(\lfloor \frac{n}{2} \rfloor +1\Bigr)
%=\lfloor \frac{n-1}{2}\rfloor . 
%\end{eqnarray}
that is \ref{directsum}.4). This ends the proof.  
\end{proof}
%%%%\begin{comment} mcgplrcg
\begin{theorem}\label{fundamental12}
It is 
\begin{eqnarray}\label{fundamental10}
{\mathcal C}_n={\mathcal D}_n \oplus {\mathcal E}_n,
\end{eqnarray} 
where $\oplus$ is the orthogonal sum
%and
%$\langle \cdot , \cdot \rangle $ denotes 
with respect to the Frobenius product.
%% defined by
%For every $A_1$, $A_2\in {\mathbb{C}}^{n \times n}$,  
%the \emph{Frobenius inner product}
%$\langle A_1, A_2\rangle$ is defined by
%%\[
%%\langle A_1,A_2\rangle \, = \, tr(A_1^T A_2),
%%\quad A_1, A_2\in {\mathbb{R}}^{n \times n},
%%\]
%%where $tr(A)$ is the trace of the 
%%matrix $A$.
\end{theorem} 
%The class ${\mathcal C}_n$ 

\begin{proof}
Analogously as in Theorem \ref{directsum},
to get (\ref{fundamental10}) it is sufficient to prove 
the following properties:
%It follows that ${\mathcal B}_n$ has dimension
%$n-\lfloor n+2 \rfloor=\lfloor (n-1)/2 \rfloor$ as linear space over the complex field.
\begin{description}
    \item[{\rm \ref{fundamental12}.1)}] ${\mathcal D}_n \cap
{\mathcal E}_n =\{O_n\}$,
where $O_n \in {\mathbb{R}}^{n \times n}$ is the matrix 
whose entries are equal to $0$;
    \item[{\rm \ref{fundamental12}.2)}]
for any $C\in{\mathcal C}_n$, there exist
$C_1 \in {\mathcal D}_n$ and 
$C_2 \in {\mathcal E}_n$ with 
$C=C_1 + C_2$;
   \item[{\rm \ref{fundamental12}.3)}] 
for every $C_1 \in {\mathcal D}_n$ and 
$C_2 \in {\mathcal E}_n$, we get that $C_1 + C_2 \in {\mathcal C}_n$.
   \item[{\rm \ref{fundamental12}.4)}] 
$\langle C_1,C_2 \rangle=0$ for each 
$C_1\in{\mathcal D}_n$ and 
$C_2\in{\mathcal E}_n$.
\end{description}

\ref{fundamental12}.1) \, \, 
Let $C\in{\mathcal D}_n \cap {\mathcal E}_n$. 
%From (\ref{tee}) and (\ref{antitee}) 
%we find $\Lambda \in {\mathbb{R}}^{n \times n}$
Then, $C=Q_n \Lambda Q_n^T$, where
$\Lambda=\text{diag}({\bm{\lambda}})$,
${\bm{\lambda}}$ is symmetric and such that $\lambda_0=0$
and $\lambda_{n/2}=0$ when $n$ is even, because 
$C\in{\mathcal D}_n$. Moreover, since $C\in{\mathcal E}_n$, 
we get that $\lambda_j =0$ for $j=1,\ldots, n-1$,
$j \neq n/2$ when  $n$ is  even, that is
${\bm{\lambda}}={\bm{0}}$, 
%where 
%${\bm{0}}$ is the vector having all components equal to $0$.
%(\lambda_0$
%$\lambda_1 \,
%\cdots \, \lambda_{n-1})$,  
%$\lambda_j=\lambda_{(n-j)\bmod n}=
%-\lambda_{(n-j)\bmod n}$, and hence 
%$\lambda_j=-\lambda_j=0$, for all
%$j \in \{0$, $1, \ldots, n-1\}$.
Thus, $\Lambda=O_n$ and hence $C=O_n$. This proves
\ref{fundamental12}.1).

\ref{fundamental12}.2) \, \, 
Let $C \in {\mathcal C}_n$, 
$\Lambda \in {\mathbb{R}}^{n \times n}$
be such that $C=Q_n \Lambda Q_n^T$, 
$\Lambda=$diag$({\bm{\lambda}})=\text{diag}(\lambda_0$
$\lambda_1 \,
\cdots \, \lambda_{n-1})$. For $j \in \{0$, $1, \ldots, n-1\}$,
set \begin{eqnarray*}
\lambda^{(1)}_j=\left\{
\begin{array}{ll}
\lambda_j & \text{if  }j \neq 0 \text{   and   }j \neq n/2,
\\ \\ 0 & \text{otherwise,  }
\end{array}\right. \qquad
\lambda^{(2)}_j=\left\{
\begin{array}{ll}
\lambda_j & \text{if  }j = 0 \text{   or   }j =n/2,
\\ \\ 0 & \text{otherwise. }
\end{array}\right. 
\end{eqnarray*}
For $r=1,2$, set $\Lambda_r=
\text{diag}(\lambda^{(r)}_0$
$\lambda^{(r)}_1 \,
\cdots \, \lambda^{(r)}_{n-1})$, and 
$C_r=Q_n \Lambda_r Q_n^T$.
Note that ${\bm{\lambda}}=
{\bm{\lambda^{(1)}}}+{\bm{\lambda^{(2)}}}$,
where ${\bm{\lambda^{(1)}}}$ is symmetric.
%$\lambda_j =\lambda^{(1)}_j + \lambda^{(2)}_j$,
%$\lambda^{(1)}_j=\lambda^{(1)}_{(n-j)\bmod n}$,
%$\lambda^{(2)}=-\lambda^{(2)}_{(n-j)\bmod n}$,
%for every $j \in \{0$, $1, \ldots, n-1\}$.
Hence, $C_1 \in {\mathcal D}_n$ and 
$C_2 \in {\mathcal E}_n$. This proves \ref{fundamental12}.2).

\ref{fundamental12}.3) \, \, Let $C_1 \in {\mathcal D}_n$ and 
$C_2 \in {\mathcal E}_n$.  
For $r=1,2$ there is
$\Lambda_r=$diag$({\bm{\lambda^{(r)}}})$=
$\text{diag}(\lambda^{(r)}_0$
 $\lambda^{(r)}_1 \,
\cdots \, \lambda^{(r)}_{n-1})$,
such that
$C_r=Q_n \Lambda_r Q_n^T$, 
${\bm{\lambda^{(1)}}}$ is symmetric, $\lambda^{(1)}_0=0$ and 
$\lambda^{(1)}_{n/2}=0$ when $n$ is even,
$\lambda^{(2)}_j=0$,
$j=1,\ldots, n-1$, $j \neq n/2$ when $n$ is even.
Thus, it is not difficult to check that
${\bm{\lambda^{(1)}}}+{\bm{\lambda^{(2)}}}$
is symmetric. So, 
$C_1 + C_2= Q_n$ diag$({\bm{\lambda^{(1)}}}+{\bm{\lambda^{(2)}}}) Q_n^T
\in {\mathcal C}_n$.

\ref{fundamental12}.4) \, \, Pick
$C_1\in{\mathcal D}_n$ and $C_2\in{\mathcal E}_n$.
Then for $r=1,2$ there exists 
$\Lambda_r=$diag$({\bm{\lambda^{(r)}}})$=
$\text{diag}(\lambda^{(r)}_0$
 $\lambda^{(r)}_1 \,
\cdots \, \lambda^{(r)}_{n-1})$,
such that
$C_r=Q_n \Lambda_r Q_n^T$, 
${\bm{\lambda^{(1)}}}$ is symmetric,
$\lambda^{(1)}_0=0$ and 
$\lambda^{(1)}_{n/2}=0$ when $n$ is even.
%and
%$\lambda^{(1)}_j=\lambda^{(1)}_{(n-j)\bmod n}$,
%$\lambda^{(2)}_j=-\lambda^{(2)}_{(n-j)\bmod n}$,
%for all $j \in \{0$, $1, \ldots, n-1\}$. 
Note that
%\begin{eqnarray}\label{risol}
$C_1^T C_2=
%Q_n 
%\overline
%{\Lambda_1} Q_n^T Q_n 
%\Lambda_2 Q_n^T =
Q_n 
%\overline
{\Lambda_1} \Lambda_2 Q_n^T$,
%\end{eqnarray}
where 
%%\begin{eqnarray}\label{diagon}
%\overline
%{\Lambda_1}&=&
%\text{diag}(
%\overline
%{\lambda^{(1)}_0} \, \, 
%\overline
%{\lambda^{(1)}_1} \, \cdots  \, 
%\overline
%{\lambda^{(1)}_{n-1}}), \\
%\overline
$$\Lambda_1 \Lambda_2= \textrm{diag}
({\lambda^{(1)}_0} \cdot
\lambda^{(2)}_0\, \, \, 
%\overline
{\lambda^{(1)}_1} \cdot
\lambda^{(2)}_1\, \,    
\cdots \, \,
%\overline
{\lambda^{(1)}_{n-1}} \cdot
\lambda^{(2)}_{n-1})=\textrm{diag}({\mathbf{0}})=O_n.$$
%\nonumber
%%\end{eqnarray}
Therefore, $C_1^T C_2=O_n$, and thus we get
$\langle C_1,C_2\rangle
=$tr$(C_1^T C_2)=0$,
%\begin{eqnarray*}
%\langle C_1,C_2\rangle&=& tr(C_1^T C_2)=0
%\sum_{j=0}^{n-1}
%%\overline
%{\lambda^{(1)}_j} \lambda^{(2)}_j= 
%\end{eqnarray}
%\begin{eqnarray}\label{prehilbert} \nonumber
%\\ 
%%&=&\sum_{j=0}^{n-1}
%\overline
%%{\lambda^{(1)}_j} \lambda^{(2)}_j
%&=&
%\sum_{j=1}^{\lfloor (n-1)/2 \rfloor}(
%%\overline
%{\lambda^{(1)}_j} \lambda^{(2)}_j+
%%\overline
%{\lambda^{(1)}_{n-j}} \lambda^{(2)}_{n-j})
%= \\ &=&\sum_{j=1}^{\lfloor (n-1)/2 \rfloor}
%(
%%\overline
%{\lambda^{(1)}_j} \lambda^{(2)}_j-
%%\overline
%{\lambda^{(1)}_j} \lambda^{(2)}_j)=0, \nonumber
%\end{eqnarray*}
%%Finally, from Propositions \ref{insert} and \ref{bn}
%%%(\ref{fundamental}) 
%%it follows that 
%\begin{eqnarray}\label{dimensions}
%\dim ({\mathcal B}_n) =\dim ({\mathcal G}_n)-
%\dim ({\mathcal C}_n) = 
%n -\Bigl(\lfloor \frac{n}{2} \rfloor +1\Bigr)
%=\lfloor \frac{n-1}{2}\rfloor . 
%\end{eqnarray}
that is \ref{fundamental12}.4). This ends the proof.  
\end{proof}
Now we give a consequence of 
\ref{directsum} and \ref{fundamental12}.
\begin{corollary}
The following result holds:
\begin{eqnarray*}
{\mathcal G}_n={\mathcal B}_n \oplus {\mathcal D}_n 
\oplus {\mathcal E}_n.
\end{eqnarray*}
\end{corollary}
%%%%\end{comment}  mcgplrcg
\begin{proposition}\label{productm}
Given two $\gamma$-matrices $G_1$, $G_2$, we get:

\begin{description}
\item[\rm {\ref{productm}.1)}] If $G_1$, $G_2\in 
{\mathcal C}_n$, then $G_1 \, G_2 \in {\mathcal C}_n$;
\item[\rm {\ref{productm}.2)}] If $G_1$, $G_2\in 
{\mathcal B}_n$, then $G_1 \, G_2 \in {\mathcal C}_n$;
\item[\rm {\ref{productm}.3)}] If $G_1 \in {\mathcal C}_n$ and
$G_2\in 
{\mathcal B}_n$, then $G_1 \, G_2
=G_2 \, G_1\in {\mathcal B}_n$.
\end{description}
\end{proposition}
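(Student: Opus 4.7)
The plan is to exploit the fact that every $\gamma$-matrix is of the form $Q_n \Lambda Q_n^T$ with $\Lambda$ diagonal, and that $Q_n$ is orthogonal so $Q_n^T Q_n = I_n$. Hence, given $G_r = Q_n \Lambda_r Q_n^T$ for $r=1,2$, the product collapses to
\begin{equation*}
G_1 G_2 = Q_n \Lambda_1 Q_n^T Q_n \Lambda_2 Q_n^T = Q_n (\Lambda_1 \Lambda_2) Q_n^T,
\end{equation*}
so that $G_1 G_2$ is again a $\gamma$-matrix whose associated eigenvalue vector has $j$-th entry $\lambda^{(1)}_j \lambda^{(2)}_j$. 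The whole proposition therefore reduces to checking the symmetry/asymmetry properties of the componentwise product of two vectors in $\mathbb{R}^n$ with prescribed symmetry, which is a direct verification using the definitions of symmetric and asymmetric vectors.

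For part \ref{productm}.1, I would observe that if both $\bm{\lambda}^{(1)}$ and $\bm{\lambda}^{(2)}$ are symmetric, then
\begin{equation*}
\lambda^{(1)}_j \lambda^{(2)}_j = \lambda^{(1)}_{n-j} \lambda^{(2)}_{n-j},
\end{equation*}
so the product vector is symmetric and $G_1 G_2 \in \mathcal{C}_n$. For part \ref{productm}.2, if both vectors are asymmetric, the two sign changes cancel:
\begin{equation*}
\lambda^{(1)}_j \lambda^{(2)}_j = (-\lambda^{(1)}_{n-j})(-\lambda^{(2)}_{n-j}) = \lambda^{(1)}_{n-j} \lambda^{(2)}_{n-j},
\end{equation*}
again producing a symmetric vector, so $G_1 G_2 \in \mathcal{C}_n$. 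For part \ref{productm}.3, with $\bm{\lambda}^{(1)}$ symmetric and $\bm{\lambda}^{(2)}$ asymmetric, exactly one sign appears,
\begin{equation*}
\lambda^{(1)}_j \lambda^{(2)}_j = -\lambda^{(1)}_{n-j} \lambda^{(2)}_{n-j},
\end{equation*}
so the product vector is asymmetric and $G_1 G_2 \in \mathcal{B}_n$. The commutativity $G_1 G_2 = G_2 G_1$ follows from the fact that diagonal matrices always commute, hence $\Lambda_1 \Lambda_2 = \Lambda_2 \Lambda_1$.

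There is essentially no obstacle here: the main point is realizing that the proof is not a matrix computation but a purely algebraic check on the eigenvalue sequences. A minor thing worth mentioning explicitly, to avoid leaving a gap, is that in part \ref{productm}.2 the asymmetry of $\bm{\lambda}^{(r)}$ forces $\lambda^{(r)}_0 = 0$ (and $\lambda^{(r)}_{n/2} = 0$ when $n$ is even), so the componentwise product also vanishes at these indices, which is consistent with (and indeed required by) the symmetric vector having no constraint at $0$ and $n/2$. Interpreting indices modulo $n$ when writing $n-j$ (as done in the statement of Theorem \ref{directsum}) handles the index $j=0$ cleanly.
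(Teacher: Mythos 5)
Your proposal is correct and follows essentially the same route as the paper: write $G_r=Q_n\Lambda_r Q_n^T$, collapse the product to $Q_n(\Lambda_1\Lambda_2)Q_n^T$, and check the symmetry/asymmetry of the componentwise product of the eigenvalue vectors (the paper disposes of part 1 by citing that ${\mathcal C}_n$ is a subalgebra, which amounts to the same verification). Your explicit remarks on the vanishing at $j=0$ and $j=n/2$ and on commutativity via $\Lambda_1\Lambda_2=\Lambda_2\Lambda_1$ are fine and, if anything, slightly more careful than the paper's wording.
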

\begin{proof} 
\ref{productm}.1) It follows immediately from the fact that 
${\mathcal G}_n$ is an algebra.

\ref{productm}.2) 
If $G_1=Q_n \Lambda^{({G_1})} 
Q_n^T $ and $G_2=Q_n \Lambda^{({G_2})} 
Q_n^T $, then
%\begin{eqnarray*}\label{meaning}
$G_1 \, G_2=Q_n 
\Lambda^{(G_1)}\, \Lambda^{(G_2)} Q_n^T$, 
%\end{eqnarray*}
where ${\Lambda}^{(G_1)} {\Lambda}^{(G_2)}=$diag
$({\lambda^{(G_1)}_0} 
\lambda^{(G_2)}_0 \quad 
%\overline
{\lambda^{(G_1)}_1} 
\lambda^{(G_2)}_1\, \,    
\cdots \, \,
%\overline
{\lambda^{(G_1)}_{n-1}} 
\lambda^{(G_1)}_{n-1})$.
%\begin{eqnarray*}\label{bigmatrix}
%\Lambda^{(A)}\, \Lambda^{(B)}=
%\begin{pmatrix} 
%\lambda_0^{({A})} \, \lambda_0^{(B)}
%& 0 & 0 &\ldots & 0 & 0 \\
%0 & \lambda_1^{(A)} \, \lambda_1^{(B)} & 
%0 &\ldots & 0 & 0 \\
%0 & 0 & \lambda_2^{(A)} \, 
%\lambda_2^{(B)} &\ldots & 0 & 0\\
%\vdots & \vdots & \vdots &\ddots & \vdots & \vdots \\
%0 & 0 & 0 &\ldots & \lambda_{n-2}^{(A)} \, 
%\lambda_{n-2}^{(B)} & 0 \\
%0 & 0 & 0 &\ldots & 0 & \lambda_{n-1}^{(A)} \, 
%\lambda_{n-1}^{(B)}
%\end{pmatrix}.
%\end{eqnarray*}
Since the eigenvalues of $G_1$ and 
$G_2$ are asymmetric, we get that the eigenvalues of 
${G_1} \, G_2$ are symmetric. Hence,
${G_1} \, {G_2} \in {\mathcal C}_n$. 

\ref{productm}.3)  We first note that, since 
${\mathcal G}_n$ is an algebra, we get that $G_1\, G_2=
G_2 \, G_1$.
Since the eigenvalues of $G_1$ are symmetric and those of $G_2$ 
are asymmetric, arguing analogously
as in the proof of \ref{productm}.2)
it is possible to check that the eigenvalues of 
$G_1 \, G_2$ are asymmetric. Therefore,
${G_1} \, {G_2} \in {\mathcal B}_n$.   
\end{proof}
\section{Structural characterizations of $\gamma$-matrices}
In this section we show how ${\mathcal C}_n$ coincides
with the set of all real symmetric circulant matrices, and 
${\mathcal S}_n$ is a particular subclass of reverse circulant
matrices. 

We consider the set of families
\begin{eqnarray*}
{\mathcal L}_{n,k}&=& \{A \in {\mathbb{R}}^{n \times n}:
\text{   there  is   } \mathbf{a}=
(a_0 \, a_1 \, \ldots \, a_{n-1})^T \in {\mathbb{R}}^n \text{   with   }
a_{l,j}=a_{(j + kl) \bmod n} \}, \\
{\mathcal H}_{n,k}&=&
\{A \in {\mathbb{R}}^{n \times n}: \text{ there  is  a  symmetric }
 \mathbf{a}=
(a_0 \, a_1 \, \ldots \, a_{n-1})^T \in {\mathbb{R}}^n 
\\ & & \text{   with   }
a_{l,j}=a_{(j + kl) \bmod n} \},
\\ {\mathcal J}_{n,k}&=& 
\Bigl\{A \in {\mathbb{R}}^{n \times n}: \text{  there  is  a  symmetric }
 \mathbf{a}=
(a_0 \, a_1 \, \ldots \, a_{n-1})^T \in {\mathbb{R}}^n \text{   with   }\\ & &
\displaystyle{\sum_{t=0}^{n-1} a_t = 0},   
\displaystyle{\sum_{t=0}^{n-1} (-1)^t a_t
\, = \, 0} \text{  when  } n \text{  is  even,}
 \text{   and   }
a_{l,j}=a_{(j + kl) \bmod n} \Bigr\},
\end{eqnarray*}
where $k \in \{1,2,\ldots, n-1 \}$.

When $k=n-1$, ${\mathcal L}_{n,n-1}$ is the 
class of all \emph{real circulant matrices},
that is the family
of those matrices $C\in {\mathbb{R}}^{n \times n}$ such that 
every row, after the first, has 
the elements of the previous one shifted cyclically one place 
right (see, e.g., \cite{DAVIS}).
%that is an $n\times n$ complex matrix 
%with the property that there exists a vector
%$\mathbf{c}=(c_0 \, c_1 \, \ldots \, c_{n-1})^T \in 
%\mathbb{R}^n$ such that
%$c_{k,l}= c_{(k-l)\bmod n}$ for any $k$, $l \in
%\{ 0$ $1, 2, \ldots, n-1\}$, namely

Given a vector $\mathbf{c} \in \mathbb{R}^n$,
 $\mathbf{c} = (c_0 \, c_1 \cdots 
c_{n-1})^T$, let us define
\begin{eqnarray*}\label{circ}
%bmatrix=parentesi quadre; pmatrix=parentesi tonde
\text{circ}(\mathbf{c})=C=
\begin{pmatrix} 
c_0 & c_1 & c_2 &\ldots & c_{n-2} & c_{n-1} \\
c_{n-1} & c_0 & c_1 &\ldots & c_{n-3} & c_{n-2} \\
c_{n-2} & c_{n-1} & c_0 &\ddots & c_{n-4} & c_{n-3} \\
\vdots & \vdots & \ddots &\ddots & \ddots & \vdots \\
c_2 & c_3 & c_4 &\ddots & c_0 & c_1 \\
c_1 & c_2 & c_3 &\ldots & c_{n-1} & c_0 
\end{pmatrix},
\end{eqnarray*}
where $C \in {\mathcal L}_{n,n-1}$.

If $\mathrm{i}$ is the imaginary unit and 
$\omega_n=e^{\frac{2 \pi \mathrm{i}}{n}}$, then
the $n$-th roots of $1$ are 
\begin{eqnarray*}\label{rootsofunity}
\omega_n^j=
e^{\frac{2 \pi j \mathrm{i}}{n}}=
\cos \Bigl( \dfrac{2\pi j}{n}\Bigl) + 
\mathrm{i} \sin \Bigl( \dfrac{2\pi j}{n}\Bigl) ,
\qquad j=0, 1, \ldots, n-1.
\end{eqnarray*}
The \emph{Fourier matrix} of dimension $n \times n$ is defined by 
$F_n=(f^{(n)}_{k,l})_{k,l}$, where 
\begin{eqnarray*}\label{fourier}
f_{k,l}^{(n)}=\dfrac{1}{\sqrt{n}} \, \omega_n^{kl},
\qquad k,l=0,1,\ldots, n-1.
\end{eqnarray*}
Note that $F_n$ is symmetric, and $F_n^{-1}=F_n^*$
(see, e.g., \cite{DAVIS}).

Let ${\mathcal F}_n$ be the space of all real matrices
\emph{simultaneously diagonalizable} by $F_n$, that is 
\begin{eqnarray*}
{\mathcal F}_n= {\rm{sd  } } (F_n) = \{ F_n \Lambda F_n^*
\in {\mathbb{R}}^{n \times n}: 
\Lambda 
=\text{diag}(\bm{\lambda}), \, \bm{\lambda} \in \mathbb{C}^n
\}.
\end{eqnarray*}
It is not difficult to see that ${\mathcal F}_n$ 
is a commutative matrix algebra.
%\begin{remark} \rm Observe that the set ${\mathcal C}_n$
%of all circulant $n \times n$-matrices is 
%an algebra, and $AB=BA$ for any two matrices 
%$A$, $B \in{\mathcal C}_n$
%(see \cite[Theorem 3.2.4]{DAVIS}).
%Moreover, a matrix $A \in{\mathbb{C}}^{n \times n}$ belongs 
%to ${\mathcal C}_n$ if and only if there is a 
%diagonal matrix $\Lambda \in {\mathbb{C}}^{n \times n}$
%such that $A=F\Lambda F^*$, where $F$ is as in 
%(\ref{fourier}) (see  
%\cite[Theorems 3.2.2 and 3.2.3]{DAVIS}).
%\end{remark} 
\begin{theorem} \label{characterizationcirculant}
\rm (\cite[Theorems 3.2.2 and 3.2.3]{DAVIS}) \em 
The following result holds:
\begin{eqnarray*}
{\mathcal F}_n={\mathcal L}_{n,n-1}.
\end{eqnarray*}
%A matrix $C\in {\mathbb{C}}^{n \times n}$ is circulant 
%if and only if $ C \in {\mathcal C}_n$.
\end{theorem}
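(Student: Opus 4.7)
The plan is to establish the two inclusions $\mathcal{L}_{n,n-1}\subseteq\mathcal{F}_n$ and $\mathcal{F}_n\subseteq\mathcal{L}_{n,n-1}$ by direct entry-wise computations that exploit the explicit form of $F_n$ together with the orthogonality identity $\sum_{k=0}^{n-1}\omega_n^{mk}=n\,\delta_{m\bmod n,\,0}$.

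For the inclusion $\mathcal{F}_n\subseteq\mathcal{L}_{n,n-1}$, I would take a generic $A=F_n\Lambda F_n^*\in\mathbb{R}^{n\times n}$, with $\Lambda=\text{diag}(\lambda_0,\ldots,\lambda_{n-1})$, and expand the product entry-by-entry using $f^{(n)}_{k,l}=\frac{1}{\sqrt{n}}\omega_n^{kl}$. This gives
\[
a_{l,j}=\sum_{k=0}^{n-1}f^{(n)}_{l,k}\,\lambda_k\,\overline{f^{(n)}_{j,k}}=\frac{1}{n}\sum_{k=0}^{n-1}\lambda_k\,\omega_n^{(l-j)k},
\]
so $a_{l,j}$ depends only on the residue $(l-j)\bmod n$. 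Setting $a_m=\frac{1}{n}\sum_k\lambda_k\omega_n^{-mk}$ yields $a_{l,j}=a_{(j-l)\bmod n}=a_{(j+(n-1)l)\bmod n}$, which is exactly the structural condition defining $\mathcal{L}_{n,n-1}$; the reality of the vector $\mathbf{a}=(a_0,\ldots,a_{n-1})^T$ is inherited from the assumption $A\in\mathbb{R}^{n\times n}$.

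For the reverse inclusion, I would start from a real circulant $C=\text{circ}(\mathbf{c})\in\mathcal{L}_{n,n-1}$ and define $\bm{\lambda}=\sqrt{n}\,F_n^{*}\mathbf{c}$, that is, $\lambda_k=\sum_{s=0}^{n-1}c_s\,\omega_n^{-ks}$. Substituting this into the formula above,
\[
\bigl(F_n\,\text{diag}(\bm{\lambda})\,F_n^{*}\bigr)_{l,j}=\frac{1}{n}\sum_{k=0}^{n-1}\Bigl(\sum_{s=0}^{n-1}c_s\,\omega_n^{-ks}\Bigr)\omega_n^{(l-j)k}=\sum_{s=0}^{n-1}c_s\,\delta_{(l-j-s)\bmod n,\,0}=c_{(j-l)\bmod n},
\]
which is precisely the $(l,j)$-entry of $\text{circ}(\mathbf{c})$, so $C=F_n\Lambda F_n^*\in\mathcal{F}_n$.

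The main obstacle is purely bookkeeping: reconciling the index formula $a_{l,j}=a_{(j+(n-1)l)\bmod n}$ used in the definition of $\mathcal{L}_{n,n-1}$ with the residue $(l-j)\bmod n$ that emerges from the Fourier computation, and verifying that the complex vector $\bm{\lambda}$ allowed in the definition of $\mathcal{F}_n$ produces a \emph{real} matrix. The latter point is automatic in both directions: the DFT of a real vector satisfies $\lambda_{n-k}=\overline{\lambda_k}$, and conversely this Hermitian symmetry ensures that $F_n\,\text{diag}(\bm{\lambda})\,F_n^{*}$ has real entries. No hard estimate is involved; the whole argument is a careful unwinding of definitions and a single application of the roots-of-unity orthogonality relation.
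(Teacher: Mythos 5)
The paper offers no proof of this statement: it is quoted from \cite[Theorems 3.2.2 and 3.2.3]{DAVIS}, so there is nothing internal to compare against. Your direct verification via the roots-of-unity orthogonality relation is the standard textbook argument (essentially Davis's own), and both inclusions are set up correctly: the forward direction correctly reduces to observing that $(F_n\Lambda F_n^*)_{l,j}=\frac1n\sum_k\lambda_k\omega_n^{(l-j)k}$ depends only on $(l-j)\bmod n$, and reality of $\mathbf a$ is indeed free because the definition of ${\mathcal F}_n$ already restricts to real matrices (so the Hermitian-symmetry remark $\lambda_{n-k}=\overline{\lambda_k}$ is only needed if you want to characterize \emph{which} $\bm\lambda$ occur, which the theorem does not ask for).

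There is one indexing slip in the reverse inclusion. With your choice $\lambda_k=\sum_s c_s\omega_n^{-ks}$, the Kronecker delta $\delta_{(l-j-s)\bmod n,\,0}$ selects $s\equiv l-j$, so the chain of equalities ends at $c_{(l-j)\bmod n}$, not $c_{(j-l)\bmod n}$ as written; that is the $(l,j)$-entry of $\mathrm{circ}(\mathbf c)^T$ rather than of $\mathrm{circ}(\mathbf c)$ (whose $(l,j)$-entry is $c_{(j-l)\bmod n}$, consistent with the condition $a_{l,j}=a_{(j+(n-1)l)\bmod n}$ defining ${\mathcal L}_{n,n-1}$). The fix is to take $\lambda_k=\sum_s c_s\omega_n^{+ks}=\mathbf c^T\mathbf w^{(k)}$ --- exactly the eigenvalue formula the paper records immediately after the theorem --- after which the same computation yields $c_{(j-l)\bmod n}$ and the inclusion follows. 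This is a cosmetic transposition, not a gap in the method; the proof is otherwise complete.
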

As a consequence of this theorem,
%Theorem
%\ref{characterizationcirculant}, 
we get that
the $n$ eigenvectors of every circulant matrix
$C\in {\mathbb{R}}^{n \times n}$ are 
given by
\begin{eqnarray*}\label{tj}
{\mathbf{w}}^{(j)}=
%\dfrac{1}{\sqrt{n}}
(1 \, \, \omega_n^j \, \, 
\omega_n^{2j} \,  \, \cdots \, \, 
\omega_n^{(n-1)j})^T, 
\end{eqnarray*} and
the eigenvalues of a matrix $C =$circ$(\mathbf{c})\in  
{\mathcal F}_n$ 
are expressed by
$$\displaystyle{\lambda_j= 
%\sqrt{n}\, \, 
{\mathbf{c}}^T {\mathbf{w}}^{(j)} =
\sum_{k=0}^{n-1} c_k \omega_n^{jk}},
\qquad j=0, 1, \ldots, n-1 .$$ 
%(see also \cite[Theorem 3.2.2]{DAVIS}).  
%Moreover, observe that, if $A$ is a real matrix, then 
%$\displaystyle{\lambda_0=\sum_{k=0}^n c_k 
%\in \mathbb{R}}$, and when $n$ is even we get 
%\begin{eqnarray*}
%\lambda_{n/2}&=&\sum_{k=0}^n c_k
%\omega^{nk/2}
%=\sum_{k=0}^n c_k
%e^{2 \pi k \mathrm{i}}= \\ &=& \sum_{k=0}^n c_k \Bigl(
%\cos (\pi  k ) + \mathrm{i} \sin (\pi  k ) \Bigr)=
%\sum_{k=0}^n (-1)^k c_k \in \mathbb{R}. 
%\end{eqnarray*}

Now we present some results about
symmetric circulant real matrices. 
%Note that ${\mathcal C}_n$ is an algebra, 
%since ${\mathcal C}_n$ is.
Observe that, if $C=$circ$({\mathbf{c}})$, with $\mathbf{c} \in \mathbb{R}^n$,
then $C$ is symmetric if and only if ${\mathbf{c}}$ is symmetric.
%$a_{l,j}= c_{\pm(l-j)\bmod n}$.
%$c_j=c_{n-j}$ for every $j=1$, $2, \ldots, 
%\lfloor \frac{n}{2} \rfloor$. Hence, $C$ has at most 
%$\lfloor \frac{n}{2} \rfloor +1$
%different elements. 
Thus, the class of all real symmetric circulant matrices 
coincides with ${\mathcal H}_{n,n-1}$ and 
has dimension
$\lfloor \frac{n}{2} \rfloor +1$ over $\mathbb{R}$. 
%
%Now, we recall the following results.
\begin{theorem}\label{Theorem3.2} 
\rm (see, e.g., \cite[\S4]{CGV},
\cite[Lemma 3]{CINESI}) \em
Let $C \in {\mathcal H}_{n,n-1}$. Then, the set of all eigenvectors of $C$ can be
expressed as
$\{{\mathbf{q}}^{(0)}$, ${\mathbf{q}}^{(1)}$,
$\ldots$, ${\mathbf{q}}^{(n-1)}\}$, where 
${\mathbf{q}}^{(j)}$, $j=0$, $1, \ldots, n-1$, is
as in \rm (\ref{35}), (\ref{yjzj}) \em  and  \rm(\ref{45}). \em
%
%$\lambda_j$, $j=0$, $1, \ldots, n-1$, be its eigenvalues.
\end{theorem}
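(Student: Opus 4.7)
The plan is to leverage Theorem \ref{characterizationcirculant}, which already provides the complex eigenvectors $\mathbf{w}^{(j)}$ of any circulant matrix together with the eigenvalue formula $\lambda_j = \sum_{k=0}^{n-1} c_k \, \omega_n^{jk}$, and then to pass from this complex basis to a real orthonormal one using the symmetry hypothesis on $\mathbf{c}$.

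First I would exploit the fact that $C \in {\mathcal H}_{n,n-1}$ means $C = \text{circ}(\mathbf{c})$ with $c_k = c_{n-k}$. Expanding $\omega_n^{jk} = \cos(2\pi jk/n) + \mathrm{i}\sin(2\pi jk/n)$ and pairing the indices $k$ and $n-k$, the sine contributions cancel, so each $\lambda_j$ is real; a similar manipulation gives the crucial identity $\lambda_j = \lambda_{n-j}$ for every $j$. This pairing of eigenvalues is what will force the existence of real eigenvectors of the required shape.

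Next, for each $j \in \{1, 2, \ldots, \lfloor (n-1)/2 \rfloor\}$, the eigenspace associated with $\lambda_j = \lambda_{n-j}$ contains both $\mathbf{w}^{(j)}$ and $\mathbf{w}^{(n-j)} = \overline{\mathbf{w}^{(j)}}$, so it also contains the real vectors $\mathbf{u}^{(j)} = \tfrac{1}{2}(\mathbf{w}^{(j)} + \mathbf{w}^{(n-j)})$ and $\mathbf{v}^{(j)} = \tfrac{1}{2\mathrm{i}}(\mathbf{w}^{(j)} - \mathbf{w}^{(n-j)})$, whose $k$-th entries are precisely $\cos(2\pi jk/n)$ and $\sin(2\pi jk/n)$. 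After normalizing by $\sqrt{2/n}$, these are exactly the vectors $\mathbf{q}^{(j)}$ and $\mathbf{q}^{(n-j)}$ defined in (\ref{yjzj}). For the boundary indices I would treat $j=0$ separately, where $\mathbf{w}^{(0)}$ is already real and normalizes to $\mathbf{q}^{(0)}$ as in (\ref{35}); and, when $n$ is even, $j=n/2$, where $\mathbf{w}^{(n/2)} = (1, -1, 1, -1, \ldots)^T$ is real and yields $\mathbf{q}^{(n/2)}$ as in (\ref{45}).

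To conclude, I would invoke the already recorded fact (from \cite{CINESI}) that the columns of $Q_n$ form an orthonormal system, so the $n$ vectors $\{\mathbf{q}^{(0)}, \mathbf{q}^{(1)}, \ldots, \mathbf{q}^{(n-1)}\}$ are linearly independent. Since they are all eigenvectors of $C$, they form a complete eigenbasis. The only real obstacle is bookkeeping: keeping the parity cases of $n$ straight and verifying that, when multiple $\lambda_j$'s happen to coincide across different pairs, any valid eigenvector still decomposes into the $\mathbf{q}^{(j)}$ basis — this follows automatically because these $n$ vectors already diagonalize every matrix in ${\mathcal H}_{n,n-1}$.
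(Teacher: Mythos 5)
The paper does not actually prove this theorem---it is quoted from \cite[\S4]{CGV} and \cite[Lemma 3]{CINESI}---and your argument is the standard derivation those sources rely on, carried out correctly: the symmetry $c_k=c_{n-k}$ gives $\lambda_j=\lambda_{n-j}\in\mathbb{R}$, so for $1\le j\le\lfloor (n-1)/2\rfloor$ the real and imaginary parts of $\mathbf{w}^{(j)}$ are nonzero real eigenvectors normalizing to $\mathbf{q}^{(j)}$ and $\mathbf{q}^{(n-j)}$, the cases $j=0$ and $j=n/2$ are already real, and the orthonormality of the columns of $Q_n$ makes these $n$ eigenvectors a complete orthonormal eigenbasis. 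No gaps; your closing remark about coinciding eigenvalues across different pairs is handled exactly as you say, since once $Q_n^TCQ_n$ is shown to be diagonal every eigenvector of $C$ lies in the span of the $\mathbf{q}^{(j)}$ sharing its eigenvalue.
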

Note that from Theorem \ref{Theorem3.2} it follows that
the set of all real symmetric circulant matrices is contained in ${\mathcal G}_n$. The nest result holds.
\begin{theorem}\label{Theorem3.1} \rm
(see, e.g., \cite[\S1.2]{BOSE}, \cite[\S4]{CGV}, 
\cite[Theorem 1]{TEE}) \em 
Let  $C=\text{circ}(\mathbf{c}) \in {\mathcal H}_{n,n-1}$.
Then, the eigenvalues $\lambda_j$ of $C$, 
$j=0$, $1, \ldots, \lfloor \frac{n}{2} \rfloor$, are given by  
\begin{eqnarray}\label{tee}
\lambda_j=
%\dfrac{1}{\alpha_j} \,
\mathbf{c}^T \mathbf{u}^{(j)}.
\end{eqnarray} 
%where the $\alpha_j$'s are as in \rm (\ref{alphak}), \em 
Moreover, for $j=1$, $2, \ldots, \lfloor \frac{n-1}{2} \rfloor$ it is
\begin{eqnarray*}\label{symmetriceigenvaluese}
\lambda_j=\lambda_{n-j}.
\begin{comment}=
\left\{ \begin{array}{ll} 
c_0+ 
c_{n/2} (-1)^j  + 2 \displaystyle{\sum_{k=1}^{n/2-1}} c_k 
\cos \Bigl( \dfrac{2\pi  k j}{n}\Bigl)
%\, \, 1 \leq j \leq n/2, 
& \text{if  } n \text{  is even}, \\ \\
%\end{eqnarray}
%\begin{eqnarray}\label{symmetriceigenvalueso}
c_0+ 2 \displaystyle{\sum_{k=1}^{(n-1)/2}} c_k 
\cos \Bigl( \dfrac{2 \pi k j}{n}\Bigl) 
%\, \,  1 \leq j \leq (n-1)/2, 
& \text{if  } n \text{  is odd}. \end{array} \right.
\end{comment}
\end{eqnarray*}
\end{theorem}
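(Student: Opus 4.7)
The plan is to derive both formulas from the standard Fourier diagonalization of real circulant matrices provided by Theorem \ref{characterizationcirculant}, and then to exploit the symmetry of $\mathbf{c}$ to eliminate the imaginary part. From that theorem, the eigenvalue of $C = \text{circ}(\mathbf{c})$ associated with the Fourier eigenvector $\mathbf{w}^{(j)}$ is
$$\mu_j \, = \, \mathbf{c}^T \mathbf{w}^{(j)} \, = \, \sum_{k=0}^{n-1} c_k \cos\!\Bigl(\frac{2\pi jk}{n}\Bigr) + \mathrm{i}\sum_{k=0}^{n-1} c_k \sin\!\Bigl(\frac{2\pi jk}{n}\Bigr).$$

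Next, I would use the hypothesis $c_k = c_{n-k}$ to pair the summand at index $k$ with the summand at index $n-k$ in the imaginary part. Since $\sin(2\pi j(n-k)/n) = -\sin(2\pi jk/n)$, the two terms cancel; the index $k=0$ contributes $0$, and when $n$ is even the self-paired term $k = n/2$ contributes $c_{n/2}\sin(\pi j) = 0$. Hence $\mu_j$ is real and equals $\sum_{k=0}^{n-1} c_k \cos(2\pi jk/n) = \mathbf{c}^T \mathbf{u}^{(j)}$. To translate this into the real basis used in Theorem \ref{Theorem3.2}, I would observe that for $0 \le j \le \lfloor n/2\rfloor$ the column $\mathbf{q}^{(j)}$ is a normalization of $\mathbf{u}^{(j)}$, which is the real part of $\mathbf{w}^{(j)}$; since $C$ is real, $C\mathbf{u}^{(j)}$ is the real part of $C\mathbf{w}^{(j)} = \mu_j \mathbf{w}^{(j)}$, namely $\mu_j \mathbf{u}^{(j)}$. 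This yields $\lambda_j = \mu_j = \mathbf{c}^T \mathbf{u}^{(j)}$, i.e.\ formula (\ref{tee}).

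For the equality $\lambda_j = \lambda_{n-j}$ with $1 \le j \le \lfloor(n-1)/2\rfloor$, I would note that the column $\mathbf{q}^{(n-j)}$ is a normalization of the imaginary part $\mathbf{v}^{(j)}$ of $\mathbf{w}^{(j)}$; the same real-part/imaginary-part argument applied to $C\mathbf{w}^{(j)} = \mu_j \mathbf{w}^{(j)}$ shows that $\mathbf{v}^{(j)}$ is an eigenvector of $C$ with the same eigenvalue $\mu_j$, hence $\lambda_{n-j} = \mu_j = \lambda_j$. Alternatively, one applies (\ref{tee}) to both indices and uses $\cos(2\pi(n-j)k/n) = \cos(2\pi jk/n)$ termwise. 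The only real obstacle here is bookkeeping: correctly matching the Fourier index $j$ to the eigenvalue labelled by the sine column $\mathbf{q}^{(n-j)}$, and carefully handling the boundary indices $k=0$ and (when $n$ is even) $k = n/2$ in the pairing argument. Beyond that, everything reduces to elementary trigonometric identities.
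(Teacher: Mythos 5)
Your proof is correct. Note that the paper itself offers no proof of Theorem \ref{Theorem3.1}: it is stated as a known result with citations to \cite{BOSE}, \cite{CGV} and \cite{TEE}, so there is nothing internal to compare against. The route you take — diagonalize by the Fourier matrix via Theorem \ref{characterizationcirculant}, use the symmetry $c_k=c_{n-k}$ to kill the imaginary part of $\mu_j=\mathbf{c}^T\mathbf{w}^{(j)}$ (with the boundary indices $k=0$ and $k=n/2$ handled as you do), and then pass to the real basis by taking real and imaginary parts of $C\mathbf{w}^{(j)}=\mu_j\mathbf{w}^{(j)}$ — is the standard argument and is carried out soundly; in particular it correctly explains why the sine column $\mathbf{q}^{(n-j)}$ carries the same eigenvalue as the cosine column $\mathbf{q}^{(j)}$, which is exactly the content of $\lambda_j=\lambda_{n-j}$. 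One small caveat: the parenthetical ``alternatively, apply (\ref{tee}) to both indices'' does not quite stand on its own, since in the paper's labelling $\lambda_{n-j}$ is the eigenvalue attached to $\mathbf{v}^{(j)}$, not to $\mathbf{u}^{(n-j)}$, so the termwise cosine identity by itself does not identify $\lambda_{n-j}$; but your primary argument via the imaginary part already settles this, so nothing is missing.
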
  
%\begin{theorem}\label{real}
%\rm (see also \cite[Lemma 2]{ROJO}) \em
%Let $C \in {\mathcal C}_n$ 
%have real eigenvalues. Then, $C$ is symmetric. 
%\end{theorem}
From Theorem \ref{Theorem3.1} it follows that, if $C$ is
a real symmetric circulant matrix and 
$\bm{\lambda}^{(C)}$ is the set
of its eigenvalues, then $\bm{\lambda}^{(C)}$ is symmetric, thanks to 
(\ref{tee}). Hence,
\begin{eqnarray}\label{Hnn-1Sn}
{\mathcal H}_{n,n-1} \subset {\mathcal C}_n.
\end{eqnarray} 

Now we prove that ${\mathcal C}_n$ is contained in the class 
of all real symmetric circulant matrices ${\mathcal H}_{n,n-1}$.
First, we give the following
\begin{theorem}\label{cccirc}
Every matrix $C \in {\mathcal C}_n$ is circulant, that is 
\begin{eqnarray}\label{L}
{\mathcal C}_n \subset {\mathcal L}_{n,n-1}.
\end{eqnarray}
\end{theorem}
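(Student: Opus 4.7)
The plan is to compute the generic entry $c_{l,j}$ of $C=Q_n\Lambda Q_n^T$ and to show it depends only on $(l-j)\bmod n$. That dependence is exactly membership in $\mathcal{L}_{n,n-1}$: taking $k=n-1$ in the definition of $\mathcal{L}_{n,k}$ yields $(j+(n-1)l)\bmod n=(j-l)\bmod n$, so once $c_{l,j}$ is reduced to a function of $(l-j)\bmod n$ alone, the generating vector is recovered by $a_t=c_{0,t}$.

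Starting from $c_{l,j}=\sum_{k=0}^{n-1} q^{(n)}_{l,k}\,\lambda_k\,q^{(n)}_{j,k}$, I would use (\ref{prestartingmatrix}) to split the sum into four pieces: the boundary term $k=0$; the cosine block of middle indices $1\le k\le\lfloor(n-1)/2\rfloor$; the sine block of middle indices; and, only when $n$ is even, the boundary term $k=n/2$. In the sine block I would substitute $k'=n-k$, which reindexes it onto $1\le k'\le\lfloor(n-1)/2\rfloor$, the same range as the cosine block. Because $\alpha_k=\alpha_{n-k}$ by (\ref{alphak}) and $\lambda_k=\lambda_{n-k}$ by the symmetry of $\bm{\lambda}$, the two middle blocks combine termwise, and each paired term collapses via $\cos\theta_l\cos\theta_j+\sin\theta_l\sin\theta_j=\cos(\theta_l-\theta_j)$, with $\theta_m=2\pi mk/n$, to the single summand $(2/n)\,\lambda_k\,\cos(2\pi(l-j)k/n)$.

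The boundary $k=0$ contributes $\lambda_0/n$, independent of $l$ and $j$; when $n$ is even, the boundary $k=n/2$ contributes $\lambda_{n/2}(-1)^{l+j}/n=\lambda_{n/2}(-1)^{l-j}/n$, which again depends only on $(l-j)\bmod n$. Assembling the pieces gives a closed form for $c_{l,j}$ as a function of $(l-j)\bmod n$ alone, proving (\ref{L}).

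The only real obstacle is clean bookkeeping: handling the parity case distinction and ensuring that the substitution $k\mapsto n-k$ exhausts the sine block without overlapping the cosine block or double-counting the even-$n$ boundary $k=n/2$. No identity beyond the cosine subtraction formula and the symmetry of $\bm{\lambda}$ is required. As an independent cross-check, (\ref{Hnn-1Sn}) gives $\mathcal{H}_{n,n-1}\subset\mathcal{C}_n$, while both spaces have dimension $\lfloor n/2\rfloor+1$ (by the remark preceding Theorem \ref{Theorem3.2} and by Proposition \ref{insert}), forcing $\mathcal{C}_n=\mathcal{H}_{n,n-1}\subset\mathcal{L}_{n,n-1}$ at once.
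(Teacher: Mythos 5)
Your proposal is correct and follows essentially the same route as the paper: expand $c_{l,j}=\sum_k q^{(n)}_{l,k}\lambda_k q^{(n)}_{j,k}$, pair each sine column $n-k$ with the cosine column $k$ using the symmetry of $\bm{\lambda}$, and collapse each pair via the cosine subtraction formula to $\frac{2}{n}\lambda_k\cos\bigl(\frac{2\pi(l-j)k}{n}\bigr)$, with the $k=0$ and (for even $n$) $k=n/2$ boundary terms handled separately. Your closing dimension-count cross-check is a valid shortcut the paper does not use in this theorem, but your main argument coincides with the paper's proof.
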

\begin{proof}
Let $C \in {\mathcal C}_n$, $C=(c_{k,l})_{k.l}$ and 
$\Lambda^{(C)}=\text{diag}(\lambda_0^{(C)}$ 
$\lambda_1^{(C)} \, \cdots \, \lambda_{n-1}^{(C)})$ be such that 
$\lambda_j^{(C)}=\lambda_{(n-j) \bmod n}^{(C)}$ for every
$j \in \{0$, $1, \ldots, n-1\}$, and 
$C=Q_n \Lambda^{(C)} Q_n^T$.
We have 
\begin{eqnarray*}\label{lam1}
c_{k,l} = \sum_{j=0}^{n-1}
q_{k,j}^{(n)}\, \lambda_j^{(C)}\,q^{(n)}_{l,j}.
\end{eqnarray*}
%Observe that $\lambda_0=0$ and $\lambda_{n/2}=0$, if
%$n$ is even. 
From this we get, if $n$ is even,
\begin{eqnarray}\label{susy00} c_{k,l}=
 \lambda_0^{(C)} 
q^{(n)}_{k,0} q^{(n)}_{l,0} + \lambda_{n/2}^{(C)} 
q^{(n)}_{k,n/2} q^{(n)}_{l, n/2} +
\sum_{j=1}^{n/2-1} 
\lambda_j^{(C)}(
q^{(n)}_{k,j}q^{(n)}_{l,j}+q^{(n)}_{k,n-j}q^{(n)}_{l,n-j}),
\end{eqnarray} 
and, if $n$ is odd,
\begin{eqnarray}\label{susy001}  c_{k,l}=
\lambda_0^{(C)} \, q^{(n)}_{k,0} \, q^{(n)}_{l,0}+
\sum_{j=1}^{ (n-1)/2 } 
\lambda_j^{(C)} (
q^{(n)}_{k,j}q^{(n)}_{l,j}+q^{(n)}_{k,n-j}q^{(n)}_{l,n-j}).
\end{eqnarray}
When $n$ is even, 
from 
(\ref{prestartingmatrix}) and (\ref{susy00}) we deduce
\begin{eqnarray*}
c_{k,l}&=& 
\dfrac{\lambda_0^{(C)}}{n} + (-1)^{k-l} \, 
\dfrac{\lambda_{n/2}^{(C)}}{n}+
\\&+&
\dfrac{2}{n} \sum_{j=1}^{n/2-1} \lambda_j^{(C)}
\cdot \Bigg(
    \cos\left(\dfrac{2\pi kj}{n}\right)\cdot
   \cos\left(\dfrac{2\pi lj}{n}\right) +
\sin\left(\dfrac{2\pi kj}{n}\right)\cdot
\sin\left(\dfrac{2\pi lj}{n}\right)\Bigg)= 
\\ &=& \dfrac{\lambda_0^{(C)}}{n} + (-1)^{k-l} \,\dfrac{\lambda_{n/2}^{(C)}}{n}+
\dfrac{2}{n}
\sum_{j=1}^{n/2-1} \lambda_j^{(C)}\cdot
    \cos\left(\dfrac{2\pi(k-l)j}{n}\right).
\end{eqnarray*}
Let $\mathbf{c}=(c_0 \, \, c_1 \, \cdots \, c_{n-1})^T$, where  
\begin{eqnarray*}
\displaystyle{c_t=\dfrac{\lambda_0^{(C)}}{n} + (-1)^t 
\,\dfrac{\lambda_{n/2}^{(C)}}{n}+\dfrac{2}{n}
\sum_{j=1}^{n/2-1} \lambda_j^{(C)} \cdot
\cos\left(\dfrac{2\,\pi\, t \, j}{n}\right)}, \quad 
t \in \{0, 1, \ldots, n-1\}. 
\end{eqnarray*}
Then we get $C=$circ$(\mathbf{c})$, since for any 
$k$, $l\in \{0$, $1, \ldots, n-1\}$ it is 
$c_{k,l}=c_{(k-l)\bmod n}$. 

When $n$ is odd, from 
(\ref{prestartingmatrix}) and (\ref{susy001}) we obtain
\begin{eqnarray*}
c_{k,l}&=& 
\dfrac{\lambda_0^{(C)}}{n} + 
\dfrac{2}{n} \sum_{j=1}^{(n-1)/2} \lambda_j^{(C)}
\cdot \Bigg(
    \cos\left(\dfrac{2\pi kj}{n}\right)\cdot
   \cos\left(\dfrac{2\pi lj}{n}\right) +\\ &+&
\sin\left(\dfrac{2\pi kj}{n}\right)\cdot
\sin\left(\dfrac{2\pi lj}{n}\right)\Bigg)= 
\\ &=& \dfrac{\lambda_0^{(C)}}{n} +
\dfrac{2}{n}
\sum_{j=1}^{(n-1)/2} \lambda_j^{(C)}\cdot
    \cos\left(\dfrac{2\pi(k-l)j}{n}\right).
\end{eqnarray*}

Let $\mathbf{c}=(c_0 \, c_1 \, \cdots \, c_{n-1})^T$, where  
\begin{eqnarray*}
\displaystyle{c_t=\dfrac{\lambda_0^{(C)}}{n} +\dfrac{2}{n}
\sum_{j=1}^{(n-1)/2} \lambda_j^{(C)} \cdot
\cos\left(\dfrac{2\pi \, t \, j}{n}\right)}, \quad 
t \in \{0, 1, \ldots, n-1\}. 
\end{eqnarray*}
Hence, $C=$circ$(\mathbf{c})$, because for each
$k$, $l\in \{0$, $1, \ldots, n-1\}$ it is
$c_{k,l}=c_{(k-l)\bmod n}$. 
Therefore,
${\mathcal C}_n \subset {\mathcal L}_{n,n-1}. $
\end{proof}
A consequence of Theorem \ref{cccirc} 
is the following
\begin{corollary}
The class ${\mathcal C}_n$ is the set of all real symmetric circulant matrices,
that is \begin{eqnarray}\label{HS}
{\mathcal C}_n={\mathcal H}_{n,n-1}.
\end{eqnarray}
%$C \in {\mathbb{R}}^{n \times n}$.
\end{corollary}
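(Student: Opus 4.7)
The plan is to leverage the two inclusions already established and close the loop with a short symmetry check. On one side, (\ref{Hnn-1Sn}) gives $\mathcal{H}_{n,n-1}\subset\mathcal{C}_n$; on the other side, Theorem \ref{cccirc} gives $\mathcal{C}_n\subset\mathcal{L}_{n,n-1}$. What is missing is only the observation that the circulant generator $\mathbf{c}$ produced in the proof of Theorem \ref{cccirc} is in fact a symmetric vector.

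First I would take an arbitrary $C\in\mathcal{C}_n$ and invoke Theorem \ref{cccirc} to write $C=\mathrm{circ}(\mathbf{c})$ with the explicit entries
\[
c_t \, = \, \frac{\lambda_0^{(C)}}{n} + (-1)^t\,\frac{\lambda_{n/2}^{(C)}}{n} + \frac{2}{n}\sum_{j=1}^{n/2-1} \lambda_j^{(C)}\cos\!\left(\frac{2\pi\,t\,j}{n}\right)
\]
when $n$ is even, and the analogous formula without the $(-1)^t$ term when $n$ is odd. Next I would verify directly that $c_t=c_{n-t}$ for every $t\in\{0,1,\ldots,n-1\}$: the cosine terms satisfy $\cos(2\pi(n-t)j/n)=\cos(2\pi tj/n)$ by periodicity and evenness of the cosine, and, when $n$ is even, $(-1)^{n-t}=(-1)^t$. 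Hence $\mathbf{c}$ is symmetric in the sense of the excerpt.

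Once $\mathbf{c}$ is symmetric, the matrix $C=\mathrm{circ}(\mathbf{c})$ lies in $\mathcal{H}_{n,n-1}$ by the very definition of $\mathcal{H}_{n,k}$ with $k=n-1$. This yields $\mathcal{C}_n\subset\mathcal{H}_{n,n-1}$, which combined with (\ref{Hnn-1Sn}) gives the equality (\ref{HS}).

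There is no real obstacle here: the entire argument is a one-line symmetry check on top of Theorem \ref{cccirc}. The only minor care needed is to handle the parity of $n$ separately so that the $\lambda_{n/2}^{(C)}$ term (which is present only in the even case) is correctly seen to be symmetric via $(-1)^{n-t}=(-1)^t$; the odd case is even simpler because the offending term is absent.
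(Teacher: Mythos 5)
Your proof is correct, and its overall skeleton is the same as the paper's: both arguments combine the inclusion ${\mathcal H}_{n,n-1}\subset{\mathcal C}_n$ from (\ref{Hnn-1Sn}) with Theorem \ref{cccirc}, and the only remaining task is to see that the circulant generator $\mathbf{c}$ is a symmetric vector. Where you differ is in how that last point is settled. The paper argues abstractly: every $C\in{\mathcal C}_n$ is a symmetric matrix (being of the form $Q_n\Lambda Q_n^T$ with $\Lambda$ diagonal), and it was observed earlier that $\mathrm{circ}(\mathbf{c})$ is symmetric as a matrix if and only if $\mathbf{c}$ is a symmetric vector; hence $\mathbf{c}$ is symmetric with no computation. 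You instead verify $c_t=c_{n-t}$ directly from the explicit cosine formula for $c_t$ produced in the proof of Theorem \ref{cccirc}, using $\cos\bigl(2\pi(n-t)j/n\bigr)=\cos\bigl(2\pi tj/n\bigr)$ and, for even $n$, $(-1)^{n-t}=(-1)^t$. Your computation is valid and self-contained, at the cost of re-deriving what the paper gets for free from matrix symmetry; the paper's route is shorter but relies on the (unproved, though easy) equivalence between symmetry of $\mathrm{circ}(\mathbf{c})$ and symmetry of $\mathbf{c}$. Either way the conclusion ${\mathcal C}_n\subset{\mathcal H}_{n,n-1}$, and hence (\ref{HS}), follows.
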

\begin{proof} Since every matrix belonging to ${\mathcal C}_n$
is symmetric, we get that (\ref{HS}) is a consequence of 
(\ref{Hnn-1Sn}) and (\ref{L}).   
\end{proof}
%\subsection{Reverse circulant matrices}
If $k=1$, then ${\mathcal L}_{n,1}$ is the set of
all \emph{real reverse circulant} (or \emph{real anti-circulant}) 
\emph{matrices}, that is the 
class of all matrices $B\in {\mathbb{R}}^{n \times n}$ such that 
every row, after the first, has 
the elements of the previous one shifted cyclically one place left
(see, e.g., \cite{DAVIS}).
Given a vector 
 $\mathbf{b} = (b_0 \, b_1 \cdots 
b_{n-1})^T \in \mathbb{R}^n$, set
%an $n\times n$-matrix of type
\begin{eqnarray*}\label{anticirc}
%bmatrix=parentesi quadre; pmatrix=parentesi tonde
%C^{\prime}=
\text{rcirc}(\mathbf{b})=B=
\begin{pmatrix} 
b_0 & b_1 & b_2 &\ldots & b_{n-2} & b_{n-1} \\
b_1 & b_2 & b_3 &\ldots & b_{n-1} & b_0 \\
b_2 & b_3 & b_4 &\ldots & b_0 & b_1 \\
\vdots & \vdots & \vdots &\ldots & \vdots & \vdots \\
b_{n-2} & b_{n-1} & b_0 &\ldots & b_{n-4} & b_{n-3} \\
b_{n-1} & b_0 & b_1 &\ldots & b_{n-3} & b_{n-2}
\end{pmatrix},
\end{eqnarray*}
with $B \in {\mathcal L}_{n,1}$.

%%where $\mathbf{c}=(c_0 \, c_1 \, \ldots \, c_{n-1})^T \in \mathbb{R}^n$.
%Note that both circulant and reverse circulant matrices
%are particular cases of $g$-circulant matrices, obtained
%by taking $g=1$ and $g=n-1$, respectively. We recall
%that a \emph{$g$-circulant $n \times n$-matrix}, 
%with $g=0$, $1, \ldots, n-1$ is an 
%element of ${\mathbb{C}}^{n \times n}$ 
%in which  
%every row, after the first, is obtained from the 
%previous one by shifting its elements
%cyclically $g$ places right ($\bmod \, n$), 
%that is an $n\times n$-matrix of type
%\begin{eqnarray}\label{gcirc}
%%bmatrix=parentesi quadre; pmatrix=parentesi tonde
%C_g=
%\begin{pmatrix} 
%c_0 & c_1 & c_2 &\ddots & c_{n-2} & c_{n-1} \\
%c_{n-g} & c_{n-g+1} & 
%c_{n-g+2} &\ddots & c_{n-g-2} & c_{n-g-1} \\
%c_{n-2g} & c_{n-2g+1} & 
%c_{n-2g+2} &\ddots & c_{n-2g-2} & c_{n-2g-1} \\
%\ddots & \ddots & \ddots &\ddots & \ddots & \ddots \\
%c_g & c_{g+1} & c_{g+2} &\ddots & 
%c_{g-2} & c_{g-1}
%\end{pmatrix}
%\end{eqnarray}
%(see, e.g., \cite{DAVIS}).
Observe that every
%reverse circulant matrix 
matrix $B \in  
 {\mathcal B}_{n,1}$ is symmetric,
%and its entries have the expression $c_{k,l}=c_{k+l
%\bmod  n}$, $k$, $l=0$, $1, \ldots, n-1$. 
%%
%%Let ${\mathcal L}_{n,1}$ be the class of all reverse circulant matrices
%%$C \in{\mathbb{R}}^{n \times n}$.
% TESI VALENTINA
%Moreover, a matrix $C \in{\mathbb{C}}^{n \times n}$ is reverse circulant
%if and only if there is a 
%diagonal matrix $\Lambda \in {\mathbb{C}}^{n \times n}$
%such that $A=F \, \Gamma \Lambda F^*$, where $F$ is as in 
%(\ref{fourier}), and 
%$$\Gamma=\begin{pmatrix} 
%1 & 0 &\ddots & 0 & 0 \\0 & 0 & 
%\ddots & 0 & 1 \\
%0 & 0 & 
%\ddots & 1 & 0 \\
%\ddots & \ddots & \ddots &\ddots & \ddots \\
%0 & 1 &\ddots & 0 & 0
%\end{pmatrix}
%$$ (see \cite[Corollaries to Theorem 5.1.7]{DAVIS}).
%%Note that 
and the set ${\mathcal L}_{n,1}$ 
%of all reverse circulant $n \times n$-matrices 
is a linear 
space over $\mathbb{R}$, but not an algebra. 
%Indeed, 
%if $A_r \in {\mathcal L}_{n,1}$, $\alpha_r \in \mathbb{C}$,
%$\Lambda_r$
%is diagonal, and $A_r=F \, \Gamma 
%\Lambda_r F^T$, $r=1,2$, then 
%$\alpha_1 A_1+ \alpha_2
%A_2= F \, \Gamma (\alpha_1 \Lambda_1 + 
%\alpha_2 \Lambda_2) F^T
%\in {\mathcal L}_{n,1}$, since 
%$\alpha_1 \Lambda_1 + \alpha_2 \Lambda_2$ 
%is diagonal. 
%However observe that, if
%\begin{eqnarray*}
%A= \begin{pmatrix}
%0 & 1 & 2 \\ 1 & 2 & 0 \\ 2 & 0 & 1
%\end{pmatrix}, \quad  
%B= \begin{pmatrix}
%0 & 2 & 1 \\ 2 & 1 & 0 \\ 1 & 0 & 2
%\end{pmatrix}, 
%\end{eqnarray*}  
%then 
%\begin{eqnarray*} AB=
%\begin{pmatrix} 
%4 & 1 & 4 \\ 4 & 4 & 1 \\ 1 & 4 & 4
%\end{pmatrix},  \quad BA=
%\begin{pmatrix} 
%4 & 4 & 1 \\ 1 & 4 & 4 \\ 4 & 1 & 4
%\end{pmatrix} \neq AB.
%\end{eqnarray*}
%Thus, $A$, $B \in \mathcal{A}_3$, but $AB$, $BA
%\not \in \mathcal{A}_3$ (indeed, they are not symmetric).
%Thus 
%%differently from ${\mathbb{C}}^{n \times n}$,
%%${\mathcal C}_n$ and ${\mathcal C}_n$, 
%the class
%${\mathcal L}_{n,1}$ is not an algebra. 
Note that,
if $B_1$, $B_2 \in {\mathcal L}_{n,1}$, then 
$B_1 \, B_2$, $B_2 \, B_1 \in {\mathcal L}_{n, n-1}$ (see 
\cite[Theorem 5.1.2]{DAVIS}).

Now we give the next result.
\begin{theorem}\label{213}
%The class
The following inclusion holds:
$${\mathcal B}_n \subset  
%defined in \rm (\ref{Bn}) \em
%is contained in $
{\mathcal L}_{n,1}.$$
\end{theorem}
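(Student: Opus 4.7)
The plan is to compute the entries $b_{k,l}$ of an arbitrary $B = Q_n \Lambda^{(B)} Q_n^T \in \mathcal{B}_n$ directly from the spectral decomposition and show that $b_{k,l}$ depends only on $(k+l) \bmod n$, which by the very definition of $\mathcal{L}_{n,1}$ (with $k=1$) places $B$ inside that class.

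First I would write $b_{k,l} = \sum_{j=0}^{n-1} q^{(n)}_{k,j}\, \lambda_j^{(B)}\, q^{(n)}_{l,j}$, and then exploit the asymmetry of $\boldsymbol{\lambda}^{(B)}$: since $\lambda_0^{(B)}=0$, and $\lambda_{n/2}^{(B)}=0$ when $n$ is even, the contributions at those indices vanish. The remaining indices come in pairs $\{j, n-j\}$ with $1\le j\le \lfloor (n-1)/2\rfloor$ and $\lambda_{n-j}^{(B)}=-\lambda_j^{(B)}$, so the sum collapses to
\[
b_{k,l} = \sum_{j=1}^{\lfloor (n-1)/2\rfloor} \lambda_j^{(B)}\Bigl(q^{(n)}_{k,j} q^{(n)}_{l,j} - q^{(n)}_{k,n-j} q^{(n)}_{l,n-j}\Bigr).
\]

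Next, I would plug in the explicit formulas for $Q_n$ from (\ref{prestartingmatrix}): on this index range $\alpha_j = \alpha_{n-j} = \widetilde{\alpha} = \sqrt{2/n}$, while $q^{(n)}_{k,j} = \widetilde{\alpha}\cos(2\pi k j/n)$ and $q^{(n)}_{k,n-j} = \widetilde{\alpha}\sin(2\pi k j/n)$. The cosine-of-a-sum identity then yields
\[
q^{(n)}_{k,j} q^{(n)}_{l,j} - q^{(n)}_{k,n-j} q^{(n)}_{l,n-j} = \frac{2}{n}\cos\!\Bigl(\dfrac{2\pi (k+l) j}{n}\Bigr).
\]
Substituting back gives $b_{k,l} = \dfrac{2}{n}\sum_{j=1}^{\lfloor (n-1)/2\rfloor} \lambda_j^{(B)}\cos(2\pi(k+l)j/n)$, which manifestly depends only on $(k+l) \bmod n$.

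Finally I would define $\mathbf{b} = (b_0\,b_1\,\cdots\,b_{n-1})^T$ by $b_t = \dfrac{2}{n}\sum_{j=1}^{\lfloor (n-1)/2\rfloor} \lambda_j^{(B)}\cos(2\pi t j/n)$ and conclude $b_{k,l} = b_{(k+l)\bmod n}$, so $B = \mathrm{rcirc}(\mathbf{b}) \in \mathcal{L}_{n,1}$. There is no real obstacle here: the argument is a mechanical pairing-up of conjugate indices followed by the cosine addition formula. The only minor bookkeeping concern is that the formulas stay uniform in the parities of $n$, which works because the asymmetry condition automatically kills the $j=0$ term and, for $n$ even, the $j=n/2$ term as well, leaving exactly the same sum $\sum_{j=1}^{\lfloor (n-1)/2\rfloor}$ in both cases.
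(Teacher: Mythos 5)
Your proposal is correct and follows essentially the same route as the paper's own proof: expand $b_{k,l}$ via the spectral decomposition, use asymmetry of $\bm{\lambda}^{(B)}$ to drop the $j=0$ (and $j=n/2$) terms and pair $j$ with $n-j$, then apply the cosine addition formula to obtain $b_{k,l}=\frac{2}{n}\sum_{j=1}^{\lfloor (n-1)/2\rfloor}\lambda_j^{(B)}\cos\bigl(2\pi(k+l)j/n\bigr)$, which depends only on $(k+l)\bmod n$. No gaps; if anything your write-up is slightly cleaner, since the paper's version contains a stray intermediate claim about $b_{(k-l)\bmod n}$ before stating the correct reverse-circulant relation.
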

\begin{proof}
Let $B\in {\mathcal B}_n$, $B=(b_{k,l})_{k.l}$ and 
$\Lambda^{(B)}=\text{diag}(\lambda_0^{(B)}$ 
$\lambda_1^{(B)} \, \cdots \, \lambda_{n-1}^{(B)})$ be such that 
$\lambda_j^{(B)}=-\lambda^{(B)}_{(n-j) \bmod n}$ for every
$j \in \{0$, $1, \ldots, n-1\}$, and $B=Q_n \Lambda^{(B)} Q_n^T$.
We have 
\begin{eqnarray}\label{lam00}
b_{k,l} = \sum_{j=0}^{n-1}
q^{(n)}_{k,j}\, \lambda_j^{(B)} \,q^{(n)}_{l,j}.
\end{eqnarray}
Observe that $\lambda_0^{(B)}=0$ and 
$\lambda_{n/2}^{(B)}=0$, if
$n$ is even. From this and (\ref{lam00}) we get
\begin{eqnarray}\label{susy2}
b_{k,l}= \sum_{j=1}^{\lfloor (n-1)/2 \rfloor} 
\lambda_j^{(B)} \cdot \big(
q^{(n)}_{k,j}q^{(n)}_{l,j}-q^{(n)}_{k,n-j}q^{(n)}_{l,n-j}\big),
\end{eqnarray}
both when $n$ is even and when $n$ is odd. From
(\ref{prestartingmatrix}) and (\ref{susy2}) we deduce
\begin{eqnarray*}
b_{k,l}&=& 
%\sum_{j=1}^{\lfloor (n-1)/2 \rfloor} \lambda_c
%\cdot \Bigg(
%    \sqrt{\dfrac{2}{n}}\cos\left(
%\dfrac{2\pi kj}{n}\right)\cdot
%   \sqrt{\dfrac{2}{n}}\cos\left(\dfrac{2\pi lj}{n}\right)-\\
%    &-&\sqrt{\dfrac{2}{n}}\sin\left(\dfrac{2\pi(n-k)j}{n}\right)\cdot
%\sqrt{\dfrac{2}{n}}
%\sin\left(\dfrac{2\pi(n-l)j}{n}\right)\Bigg)= \\ &=& 
\dfrac{2}{n} \sum_{j=1}^{\lfloor (n-1)/2 \rfloor} \lambda_j^{(B)}
 \Bigg(
    \cos\left(\dfrac{2\pi kj}{n}\right)
   \cos\left(\dfrac{2\pi lj}{n}\right)-
\sin\left(\dfrac{2\pi kj}{n}\right)
\sin\left(\dfrac{2\pi lj}{n}\right)\Bigg)= 
\\ &=& \dfrac{2}{n}
\sum_{j=1}^{\lfloor (n-1)/2 \rfloor} \lambda_j^{(B)}
    \cos\left(\dfrac{2\pi(k+l)j}{n}\right).
\end{eqnarray*}
Let $\mathbf{b}=(b_0 \, b_1 \, \cdots \, b_{n-1})^T$, where  
\begin{eqnarray}\label{ct}
\displaystyle{b_t=\dfrac{2}{n}
\sum_{j=1}^{\lfloor (n-1)/2 \rfloor} \lambda_j^{(B)} \cdot
\cos\left(\dfrac{2\pi tj}{n}\right)}, \quad 
t \in \{0, 1, \ldots, n-1\}. 
\end{eqnarray}
Thus, $B=$circ$(\mathbf{b})$, because for each
$k$, $l\in \{0$, $1, \ldots, n-1\}$ we have
$b_{k,l}=b_{(k-l)\bmod n}$. 
For any 
$k$, $l\in \{0$, $1, \ldots, n-1\}$ it is 
$b_{k,l}=b_{(k+l)\bmod n}$. Hence,
${\mathcal B}_n \subset {\mathcal L}_{n,1}$.  
\end{proof}
\begin{theorem}\label{thm:A*n struc}
One has
\begin{eqnarray*}\label{bypass}
{\mathcal B}_n\subset{\mathcal H}_{n,1}.
\end{eqnarray*}
%Let $A=(a_{k,l})_{k,l} \in  {\mathcal L}_{n,1}$, and let
%$\mathbf{a}=(a_0 \, a_1 \, \cdots \, 
%a_{n-1})^T$ denote its first column. Then, $A$
%belongs to ${\mathcal B}_n$ if and only if $\mathbf{a}$
%satisfies the
%following properties:
%\begin{description}
%\item[{\rm \ref{thm:A*n struc}.1)}]
%$a_t = a_{n-t}$ for any $t\in\{1,\ldots,
%\lfloor (n-1)/2 \rfloor\}$;
%\item[{\rm \ref{thm:A*n struc}.2)}]
%$\displaystyle{\sum_{t=0}^{n-1} a_t = 0}$;
%\item[{\rm \ref{thm:A*n struc}.3)}]
%if $n$ is even, then 
%$\displaystyle{\sum_{t=0}^{n-1} (-1)^t a_t
%\, = \, 0}$.
%\end{description}
\end{theorem}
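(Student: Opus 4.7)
The plan is to piggyback on Theorem \ref{213}, which already gives us the reverse circulant structure $\mathcal{B}_n \subset \mathcal{L}_{n,1}$, and then simply verify that the generating vector $\mathbf{b}$ coming out of that proof is symmetric. Once this is done, the inclusion $\mathcal{B}_n \subset \mathcal{H}_{n,1}$ follows directly from the definition of $\mathcal{H}_{n,1}$.

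More concretely, given $B \in \mathcal{B}_n$, Theorem \ref{213} produces a vector $\mathbf{b}=(b_0 \, b_1 \, \cdots \, b_{n-1})^T$ such that $B=\text{rcirc}(\mathbf{b})$, where the entries $b_t$ are given explicitly by formula (\ref{ct}), namely
\[
b_t=\dfrac{2}{n}\sum_{j=1}^{\lfloor (n-1)/2 \rfloor} \lambda_j^{(B)}\,\cos\!\left(\dfrac{2\pi t j}{n}\right), \qquad t \in \{0,1,\ldots,n-1\}.
\]
To show $B \in \mathcal{H}_{n,1}$ it suffices to verify that this $\mathbf{b}$ is symmetric in the sense of the excerpt, i.e.\ $b_t=b_{n-t}$ for every $t \in \{0,1,\ldots,\lfloor n/2 \rfloor\}$.

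I would then invoke the $2\pi$-periodicity and evenness of the cosine: for each $j$ and $t$,
\[
\cos\!\left(\dfrac{2\pi(n-t)j}{n}\right)=\cos\!\left(2\pi j-\dfrac{2\pi tj}{n}\right)=\cos\!\left(\dfrac{2\pi tj}{n}\right).
\]
Substituting this termwise into the expression above yields $b_{n-t}=b_t$, so $\mathbf{b}$ is symmetric. Combined with the reverse-circulant structure already established, this places $B$ in $\mathcal{H}_{n,1}$.

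There is essentially no serious obstacle here, since all the real work was carried out in Theorem \ref{213}; the present statement only refines the conclusion by noting that the coefficient vector produced there is automatically symmetric. The only thing to be careful about is checking that the boundary cases $t=0$ and (when $n$ is even) $t=n/2$ pose no issue, but these are immediate since $b_0=b_0$ trivially and the cosine identity remains valid at $t=n/2$.
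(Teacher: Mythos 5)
Your proposal is correct and follows essentially the same route as the paper: both rely on Theorem \ref{213} for the reverse-circulant structure, extract the explicit formula (\ref{ct}) for the entries $b_t$, and then verify $b_t = b_{n-t}$ via the identity $\cos\bigl(2\pi(n-t)j/n\bigr)=\cos\bigl(2\pi j - 2\pi tj/n\bigr)=\cos\bigl(2\pi tj/n\bigr)$. No substantive difference.
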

\begin{proof}
We recall that \begin{eqnarray*}
{\mathcal H}_{n,1}&=& 
\Bigl\{B \in {\mathbb{R}}^{n \times n}: \text{  there  is  a  symmetric }
 \mathbf{b}=
(b_0 \, b_1 \, \ldots \, b_{n-1})^T \in {\mathbb{R}}^n 
\\ & & \text{   with   }
b_{k,j}=b_{(j + k) \bmod n} \Bigr\}.
\end{eqnarray*}
By Theorem \ref{213}, we get ${\mathcal B}_n \subset {\mathcal L}_{n,1}$.
Now we prove the symmetry of ${\mathbf{b}}$.

Let $B \in {\mathcal B}_n$ be such that there exists
$\Lambda^{(B)} \in {\mathbb{R}}^{n \times n}$,
$\Lambda^{(B)}=\text{diag}(\lambda^{(B)}_0$
$\lambda^{(B)}_1 \,
\cdots \, \lambda^{(B)}_{n-1})$, such that   
$C= Q_n \Lambda^{(B)} Q_n^T$ and 
$\lambda_j^{(B)}=
-\lambda^{(B)}_{(n-j)\bmod n}$
for all $j \in \{0$, $1, \ldots, n-1 \}$. By Theorem \ref{213},
$b_{k,j}=b_{(j + k) \bmod n}$. Moreover,
by arguing as in Theorem \ref{213}, we get (\ref{ct}), and hence
\begin{eqnarray*}\label{at}
b_t&=&\dfrac{2}{n}
\sum_{j=1}^{\lfloor (n-1)/2 \rfloor} \lambda_j^{(B)} \cdot
\cos\left(\dfrac{2\pi tj}{n}\right)= \dfrac{2}{n}
\sum_{j=1}^{\lfloor (n-1)/2 \rfloor} \lambda_j^{(B)} \cdot
\cos\left( 2 \pi j - \dfrac{2\pi tj}{n}\right)=\\
&=& \dfrac{2}{n} \sum_{j=1}^{\lfloor (n-1)/2 \rfloor}
\lambda_j^{(B)} \cdot
\cos\left(\dfrac{2\pi (n-t)j}{n}\right)=b_{n-t}
\end{eqnarray*} 
for any $t \in \{0$, $1, \ldots, n-1 \}$. 
Thus, $\mathbf{b}$ is symmetric.  
\end{proof}
\begin{theorem}\label{Theorem3.126} 
Let  $B=$\rm rcirc\em$(\mathbf{b}) \in {\mathcal B}_n$.
Then, the eigenvalues $\lambda_j^{(B)}$ of $B$, 
$j=0, 1, \ldots, \lfloor \frac{n}{2} \rfloor$, can be expressed as
\begin{eqnarray}\label{teeminus}
\lambda_j^{(B)}=
%\dfrac{1}{\alpha_j} 
\mathbf{b}^T \mathbf{u}^{(j)}.
\end{eqnarray}
%where the $\alpha_j$'s are as in \rm (\ref{alphak}), \em 
%and in particular it is
%\begin{eqnarray}\label{symmetriceigenvalueseminus}
%\lambda_j=\left\{ \begin{array}{ll} c_0+ 
%c_{n/2} (-1)^j  + 2 \displaystyle{\sum_{k=1}^{n/2-1}} c_k 
%\cos \Bigl( \dfrac{2\pi  k j}{n}\Bigl)
%%\, \, 1 \leq j \leq n/2, 
%& \text{if  } n \text{  is even}, \\ \\
%%\end{eqnarray}
%%\begin{eqnarray}\label{symmetriceigenvalueso}
%c_0+ 2 \displaystyle{\sum_{k=1}^{(n-1)/2}} c_k 
%\cos \Bigl( \dfrac{2 \pi k j}{n}\Bigl) 
%%\, \,  1 \leq j \leq (n-1)/2, 
%& \text{if  } n \text{  is odd}. \end{array} \right.
%\end{eqnarray}
Moreover, for $j=1$, $2, \ldots \lfloor \frac{n-1}{2} \rfloor$, we get
$$\lambda^{(B)}_{n-j}=-\lambda^{(B)}_j.
$$Furthermore, it is 
$\lambda_0^{(B)}=0$, and $\lambda_{n/2}^{(B)}=0$ if $n$ is even.
\end{theorem}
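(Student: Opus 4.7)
The plan is to exploit the formula (\ref{ct}) derived in the proof of Theorem \ref{213}, combined with the orthogonality of the discrete cosines, to invert the relation and express each eigenvalue as the scalar product $\mathbf{b}^T \mathbf{u}^{(j)}$. The antisymmetry $\lambda_{n-j}^{(B)} = -\lambda_j^{(B)}$, together with the vanishing at $j=0$ and $j=n/2$ (even $n$), is essentially given for free by the very definition of $\mathcal{B}_n$, since any $\Lambda^{(B)}$ of a matrix in $\mathcal{B}_n$ is built from an asymmetric vector $\bm{\lambda}^{(B)}$. So once I know the eigenvalues are asymmetric, the identities $\lambda_0^{(B)}=-\lambda_0^{(B)}$ and $\lambda_{n/2}^{(B)}=-\lambda_{n/2}^{(B)}$ force both to vanish, and the remaining pairs are negatives of each other.

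For the main identity (\ref{teeminus}), I would start from
\[
b_t = \dfrac{2}{n}\sum_{k=1}^{\lfloor (n-1)/2\rfloor} \lambda_k^{(B)}\cos\Bigl(\dfrac{2\pi tk}{n}\Bigr),
\]
fix $j \in \{1,2,\ldots,\lfloor (n-1)/2\rfloor\}$, and compute
\[
\mathbf{b}^T \mathbf{u}^{(j)} = \sum_{t=0}^{n-1} b_t\cos\Bigl(\dfrac{2\pi tj}{n}\Bigr) = \dfrac{2}{n}\sum_{k=1}^{\lfloor (n-1)/2\rfloor} \lambda_k^{(B)} \sum_{t=0}^{n-1}\cos\Bigl(\dfrac{2\pi tk}{n}\Bigr)\cos\Bigl(\dfrac{2\pi tj}{n}\Bigr).
\]
The inner sum is the well-known discrete orthogonality relation; for indices $j,k$ strictly between $0$ and $n/2$, it evaluates to $\tfrac{n}{2}\,\delta_{jk}$. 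Substituting back leaves exactly $\lambda_j^{(B)}$, which is the claimed formula.

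For the boundary cases, when $j=0$ the inner sum becomes $\sum_{t=0}^{n-1}\cos(2\pi tk/n)=0$ for every $k\ge 1$, so $\mathbf{b}^T\mathbf{u}^{(0)}=0=\lambda_0^{(B)}$. When $n$ is even and $j=n/2$, since $\cos(2\pi t\cdot(n/2)/n)=(-1)^t$, the inner sum $\sum_{t=0}^{n-1}(-1)^t\cos(2\pi tk/n)$ vanishes for every $k$ in the summation range $1\le k\le n/2-1$ by the same orthogonality, again giving $\mathbf{b}^T\mathbf{u}^{(n/2)}=0=\lambda_{n/2}^{(B)}$.

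I do not expect a serious obstacle here: the asymmetry and the two vanishing values are immediate from the definition of $\mathcal{B}_n$, and the bulk of the argument is the standard discrete-cosine orthogonality applied to the already-established expression for the entries $b_t$. The only thing to be careful about is keeping the parity cases straight (the role of the index $n/2$ and the limits $\lfloor (n-1)/2\rfloor$ vs.\ $\lfloor n/2\rfloor$), but the unified expression (\ref{ct}) handles even and odd $n$ at once, so the orthogonality computation can be carried out without splitting into cases.
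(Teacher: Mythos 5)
Your proposal is correct, but it takes a different and noticeably heavier route than the paper. The paper's own proof is essentially one line: since $B\in{\mathcal B}_n\subset{\mathcal G}_n$ is diagonalized by $Q_n$, the vector ${\mathbf{u}}^{(j)}$ is an eigenvector of $B$ for $\lambda_j^{(B)}$, so $B\,{\mathbf{u}}^{(j)}=\lambda_j^{(B)}\,{\mathbf{u}}^{(j)}$; reading off the first component, and noting that the first row of $B=\mathrm{rcirc}(\mathbf{b})$ is $\mathbf{b}^T$ while the first entry of ${\mathbf{u}}^{(j)}$ is $1$, gives $\mathbf{b}^T{\mathbf{u}}^{(j)}=\lambda_j^{(B)}$ immediately, for all $j=0,1,\ldots,\lfloor n/2\rfloor$ at once and with no parity cases. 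You instead take the entry formula (\ref{ct}) from the proof of Theorem \ref{213} and invert it via the discrete orthogonality $\sum_{t=0}^{n-1}\cos(2\pi tk/n)\cos(2\pi tj/n)=\tfrac{n}{2}\delta_{jk}$ for $0<j,k<n/2$, handling $j=0$ and $j=n/2$ separately. Your orthogonality computations check out (in particular $k+j$ never hits a multiple of $n$ in the stated ranges, and $n/2\pm k\not\equiv 0$ for $1\le k\le n/2-1$), and your treatment of the asymmetry and of the vanishing at $j=0$ and $j=n/2$ agrees with the paper, which likewise derives these directly from the asymmetry of $\bm{\lambda}^{(B)}$. What the paper's argument buys is brevity and independence from the explicit formula for $b_t$; what yours buys is an explicit verification that the inversion of (\ref{ct}) is consistent, at the cost of depending on Theorem \ref{213} and on the cosine orthogonality identity, which the paper never needs to invoke here.
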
 
\begin{proof}
Since ${\mathbf{u}}^{(j)}$, $j=0,1, \ldots \lfloor n/2 \rfloor$,
is an eigenvector of $B$, then $$B \, {\mathbf{u}}^{(j)} =
\lambda_j^{(B)} \, {\mathbf{u}}^{(j)},$$ that is every component of the vector
$B \,  {\mathbf{u}}^{(j)}$ is equal to the respective component 
of the vector $\lambda_j^{(B)}\, {\mathbf{u}}^{(j)}$. In particular, if we consider 
the first component, we obtain (\ref{teeminus}).
\begin{comment}
The formula (\ref{symmetriceigenvalueseminus}) can be easily obtained
by using the fact that the vector ${\mathbf{c}}$ is symmetric,
thanks to (\ref{bypass}) and since the function cosinus is even.
\end{comment}
The last part of the assertion is a consequence of the asymmetry 
of the vector ${\bm{\lambda}}^{(B)}$.    
\end{proof}
For the general computation of the eigenvalues 
of reverse circulant matrices, see, e.g., 
\cite[\S1.3 and Theorem 1.4.1]{BOSE},
\cite[Lemma 4.1]{SC1}.
\begin{theorem}\label{thm:A*n struc}
The following result holds:
$${\mathcal B}_n={\mathcal J}_{n,1}.$$
%Let $A=(a_{k,l})_{k,l} \in  {\mathcal L}_{n,1}$, and let
%$\mathbf{a}=(a_0 \, a_1 \, \cdots \, 
%a_{n-1})^T$ denote its first column. Then, $A$
%belongs to ${\mathcal B}_n$ if and only if $\mathbf{a}$
%satisfies the
%following properties:
%\begin{description}
%\item[{\rm \ref{thm:A*n struc}.1)}]
%$a_t = a_{n-t}$ for any $t\in\{1,\ldots,
%\lfloor (n-1)/2 \rfloor\}$;
%\item[{\rm \ref{thm:A*n struc}.2)}]
%$\displaystyle{\sum_{t=0}^{n-1} a_t = 0}$;
%\item[{\rm \ref{thm:A*n struc}.3)}]
%if $n$ is even, then 
%$\displaystyle{\sum_{t=0}^{n-1} (-1)^t a_t
%\, = \, 0}$.
%\end{description}
\end{theorem}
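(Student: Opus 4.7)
My plan is to prove the equality by combining the inclusion $\mathcal{B}_n\subseteq \mathcal{J}_{n,1}$ with a dimension-count argument, rather than constructing the reverse inclusion directly.

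For the inclusion, I would start from what the previous theorem (the first \texttt{thm:A*n struc}) already gives: any $B\in\mathcal{B}_n$ lies in $\mathcal{H}_{n,1}$, so $B=\mathrm{rcirc}(\mathbf{b})$ with $\mathbf{b}$ symmetric and
\[
b_t \, = \, \frac{2}{n}\sum_{j=1}^{\lfloor (n-1)/2\rfloor}\lambda_j^{(B)}\cos\!\Bigl(\frac{2\pi t j}{n}\Bigr)
\]
by formula (\ref{ct}). To check the first constraint, I interchange the order of summation in $\sum_{t=0}^{n-1}b_t$ and invoke $\sum_{t=0}^{n-1}\cos(2\pi tj/n)=0$, valid because each $j\in\{1,\ldots,\lfloor(n-1)/2\rfloor\}$ is strictly between $0$ and $n$. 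For the second constraint (only relevant when $n$ is even), I rewrite $(-1)^t\cos(2\pi tj/n)=\cos(2\pi t(j+n/2)/n)$, and since $j+n/2$ again lies strictly between $0$ and $n$, the same orthogonality identity gives $\sum_{t=0}^{n-1}(-1)^t b_t=0$. So $\mathbf{b}$ satisfies both sum conditions and $B\in\mathcal{J}_{n,1}$.

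For the reverse inclusion, I would avoid diagonalizing an arbitrary element of $\mathcal{J}_{n,1}$ by hand and instead compare dimensions. The set $\mathcal{J}_{n,1}$ is linear (all defining conditions on $\mathbf{a}$ are linear), and a symmetric vector $\mathbf{a}\in\mathbb{R}^n$ is parametrized by $a_0,a_1,\ldots,a_{\lfloor n/2\rfloor}$, giving $\lfloor n/2\rfloor+1$ degrees of freedom. The constraint $\sum_t a_t=0$ removes one dimension, and in the even case the constraint $\sum_t (-1)^t a_t=0$ removes a second independent dimension. A short check — for example taking $\mathbf{a}=(1,0,\ldots,0,0,\ldots,0)+\text{sym. copy}$ versus $(1,-1,1,-1,\ldots)$ — confirms the independence of the two conditions when $n$ is even. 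This yields $\dim \mathcal{J}_{n,1}=\lfloor n/2\rfloor-[n\text{ even}]=\lfloor(n-1)/2\rfloor$, which matches $\dim\mathcal{B}_n$ as given by Proposition \ref{bn}. Since $\mathcal{B}_n\subseteq\mathcal{J}_{n,1}$ and both are finite-dimensional linear subspaces of the same dimension, equality follows.

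The only subtle point, and the step I would verify most carefully, is the independence of the two linear constraints defining $\mathcal{J}_{n,1}$ relative to symmetry: the symmetry requirement $a_j=a_{n-j}$ could a priori force one of the sums to vanish automatically, but inspection shows that it does not, so the two sum conditions really do cut the dimension down by two in the even case and by one in the odd case. Everything else is either a direct appeal to (\ref{ct}) or standard orthogonality of roots of unity.
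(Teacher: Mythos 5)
Your proposal is correct and follows essentially the same route as the paper: the inclusion ${\mathcal B}_n\subset{\mathcal J}_{n,1}$ via the symmetry already established in ${\mathcal H}_{n,1}$ plus the two vanishing-sum conditions, followed by the identical dimension count against Proposition \ref{bn}. The only (harmless) difference is that you verify $\sum_t b_t=0$ and $\sum_t(-1)^tb_t=0$ by orthogonality of cosines applied to (\ref{ct}), whereas the paper reads them off as the zero eigenvalues $\lambda_0^{(B)}=\lambda_{n/2}^{(B)}=0$ via (\ref{teeminus}); your explicit check that the two constraints are independent is a detail the paper leaves implicit.
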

\begin{proof} 
First of all, 
we recall that 
\begin{eqnarray*}
{\mathcal J}_{n,1}&=& 
\Bigl\{B \in {\mathbb{R}}^{n \times n}: \text{  there  is  a  symmetric }
 \mathbf{b}=
(b_0 \, b_1 \, \ldots \, b_{n-1})^T \in {\mathbb{R}}^n \text{   with   }\\ &&
\displaystyle{\sum_{t=0}^{n-1} b_t = 0} , 
\displaystyle{\sum_{t=0}^{n-1} (-1)^t b_t
\, = \, 0} \text{ when  }n \text{  is  even,  } \text{   and   }
b_{k,j}=b_{(j + k)\bmod n}\Bigr\}.
\end{eqnarray*}
We begin with proving that ${\mathcal B}_n \subset {\mathcal J}_{n,1}$.

Let $B \in {\mathcal B}_n$. In Theorem \ref{thm:A*n struc}
we proved that $B \in {\mathcal H}_{n,1}$, that is
$\mathbf{b}$ is symmetric and $b_{k,j}=b_{(j + k)\bmod n}$.

Now we prove that 
\begin{eqnarray}\label{zerosum11}
\displaystyle{\sum_{t=0}^{n-1} b_t = 0}.
\end{eqnarray}
Since $B \in {\mathcal B}_n$, the vector
$${\mathbf{u}}^{(0)} =  \, \, \Bigl( 1  \, \, 
 1 \, \, \cdots \, \,  1 \Bigr)^T$$
%\[
%\Bigl(\dfrac{1}{\sqrt{n}} \, \, \dfrac{1}{\sqrt{n}} \, \,
%\cdots \, \, \dfrac{1}{\sqrt{n}}
%\Bigr)^T
%\]
is an eigenvector for the eigenvalue 
$\lambda_0^{(B)}=0$. Hence, the formula (\ref{zerosum11})
is a consequence of (\ref{teeminus}). \\

Again by (\ref{teeminus}), we get $$\displaystyle{\sum_{t=0}^{n-1} (-1)^t b_t
\, = \, 0},$$
since the vector $$
{\mathbf{u}}^{(n/2)} = 
\Bigl( 1  \, -1 \, \, \, \, \, \, 1 \, -1 \, \cdots \, -1\Bigr)^T$$
%\[
%\mathbf{p}_{n,n/2} \,=\,
%\dfrac{\mathbf{u}_{n,n/2}}{\|\mathbf{u}_{n,n/2}\|} \, = \,
%\dfrac{1}{\sqrt{n}}
%\left(\cos\left(\frac{2\pi}{n}i\frac{n}{2}\right)\right)_{i=0}^{n-1}
%\, =\, \left(\dfrac{(-1)^i}{\sqrt{n}} \right)_{i=0}^{n-1}
%\]
is an eigenvector for the eigenvalue 
$\lambda^{(B)}_{n/2}=0$ if $n$ is even. 
Thus, ${\mathcal B}_n \subset {\mathcal J}_{n,1}$.

Now observe that ${\mathcal J}_{n,1}$ is a linear space 
of dimension $\lfloor (n-1)/2 \rfloor$. Thus, by Proposition \ref{bn},
${\mathcal B}_n $ and ${\mathcal J}_{n,1}$ have the same dimension.
So, ${\mathcal B}_n = {\mathcal J}_{n,1}$. This ends the proof.
  \end{proof}
\begin{theorem}\label{thm:A*n struc}
The following result holds:
$${\mathcal D}_n={\mathcal J}_{n,n-1}.$$
%questo va solo sul ViXRA    mcgplrcg
%%%%%%%%%%%%%%%%%%%
%Let $A=(a_{k,l})_{k,l} \in  {\mathcal L}_{n,1}$, and let
%$\mathbf{a}=(a_0 \, a_1 \, \cdots \, 
%a_{n-1})^T$ denote its first column. Then, $A$
%belongs to ${\mathcal B}_n$ if and only if $\mathbf{a}$
%satisfies the
%following properties:
%\begin{description}
%\item[{\rm \ref{thm:A*n struc}.1)}]
%$a_t = a_{n-t}$ for any $t\in\{1,\ldots,
%\lfloor (n-1)/2 \rfloor\}$;
%\item[{\rm \ref{thm:A*n struc}.2)}]
%$\displaystyle{\sum_{t=0}^{n-1} a_t = 0}$;
%\item[{\rm \ref{thm:A*n struc}.3)}]
%if $n$ is even, then 
%$\displaystyle{\sum_{t=0}^{n-1} (-1)^t a_t
%\, = \, 0}$.
%\end{description}
\end{theorem}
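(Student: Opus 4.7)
The plan is to mimic the structure used to prove ${\mathcal C}_n = {\mathcal H}_{n,n-1}$: first establish one inclusion directly by translating the spectral constraints defining ${\mathcal D}_n$ into the linear constraints defining ${\mathcal J}_{n,n-1}$ on the generating vector, and then conclude equality by a dimension count.

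For the forward inclusion ${\mathcal D}_n \subset {\mathcal J}_{n,n-1}$, pick $D\in{\mathcal D}_n$. Since ${\mathcal D}_n\subset{\mathcal C}_n$ and ${\mathcal C}_n={\mathcal H}_{n,n-1}$ by the corollary following Theorem \ref{cccirc}, we already know $D=\mathrm{circ}(\mathbf{c})$ for some symmetric $\mathbf{c}=(c_0\,c_1\,\cdots\,c_{n-1})^T\in\mathbb{R}^n$, which gives immediately the circulant relation $d_{l,j}=c_{(j+(n-1)l)\bmod n}$ required by ${\mathcal J}_{n,n-1}$. It remains to verify the two linear conditions on $\mathbf{c}$. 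Here I invoke Theorem \ref{Theorem3.1}, which says that the eigenvalues of the symmetric circulant $D$ satisfy $\lambda_j^{(D)}=\mathbf{c}^T\mathbf{u}^{(j)}$. Specializing to $j=0$, where $\mathbf{u}^{(0)}=(1,1,\ldots,1)^T$, gives $\lambda_0^{(D)}=\sum_{t=0}^{n-1} c_t$; and, when $n$ is even, specializing to $j=n/2$ where $\mathbf{u}^{(n/2)}=(1,-1,\ldots,-1)^T$ gives $\lambda_{n/2}^{(D)}=\sum_{t=0}^{n-1}(-1)^t c_t$. Because $D\in{\mathcal D}_n$ forces $\lambda_0^{(D)}=0$ and (in the even case) $\lambda_{n/2}^{(D)}=0$, both required summation conditions on $\mathbf{c}$ follow, proving $D\in{\mathcal J}_{n,n-1}$.

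For the reverse inclusion I would argue by dimension, exactly as in the final step of the proof of $\mathcal B_n = \mathcal J_{n,1}$ just preceding this theorem. The space ${\mathcal D}_n$ has dimension $\lfloor (n-1)/2\rfloor$ (remarked explicitly after the proof of Proposition \ref{bn}, since one starts from the $\lfloor n/2\rfloor+1$ free symmetric entries of $\bm\lambda$ and imposes $\lambda_0=0$, plus $\lambda_{n/2}=0$ when $n$ is even). The space ${\mathcal J}_{n,n-1}$ has the same dimension: a symmetric generating vector $\mathbf{c}$ has $\lfloor n/2\rfloor+1$ free entries, and the conditions $\sum_t c_t=0$ and (for $n$ even) $\sum_t(-1)^t c_t=0$ are two linearly independent constraints in the even case and one in the odd case, giving $\lfloor (n-1)/2\rfloor$ either way. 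Combined with the inclusion ${\mathcal D}_n\subset{\mathcal J}_{n,n-1}$ and the fact that ${\mathcal J}_{n,n-1}$ is a linear subspace of the space of symmetric circulants, equality of dimensions forces ${\mathcal D}_n={\mathcal J}_{n,n-1}$.

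The only genuinely delicate point is the dimension count for ${\mathcal J}_{n,n-1}$, since one must check that the linear functionals $\mathbf{c}\mapsto \sum_t c_t$ and, when $n$ is even, $\mathbf{c}\mapsto \sum_t(-1)^t c_t$ are independent on the subspace of symmetric vectors. In the even case this is clear because evaluating the first on $(1,\ldots,1)$ gives $n\neq 0$ while the second vanishes there, whereas the second is nonzero on $(1,-1,\ldots,1,-1)$ and the first vanishes. Everything else is a straightforward transcription of the argument already carried out for $\mathcal B_n={\mathcal J}_{n,1}$, with Theorem \ref{Theorem3.1} replacing Theorem \ref{Theorem3.126}.
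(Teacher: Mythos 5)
Your proposal is correct and follows essentially the same route as the paper: the forward inclusion via ${\mathcal D}_n\subset{\mathcal C}_n={\mathcal H}_{n,n-1}$ together with the eigenvalue formula $\lambda_j=\mathbf{c}^T\mathbf{u}^{(j)}$ applied to the zero eigenvalues at $j=0$ and $j=n/2$, and the reverse inclusion by matching dimensions $\lfloor (n-1)/2\rfloor$. Your explicit check that the two linear constraints are independent on symmetric vectors is a detail the paper leaves implicit, but otherwise the arguments coincide.
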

\begin{proof}
We recall that \begin{eqnarray*}
{\mathcal J}_{n,n-1}&=& 
\Bigl\{C \in {\mathbb{R}}^{n \times n}: \text{  there  is  a  symmetric }
 \mathbf{c}=
(c_0 \, c_1 \, \ldots \, c_{n-1})^T \in {\mathbb{R}}^n \text{   with   }\\ &&
\displaystyle{\sum_{t=0}^{n-1} c_t = 0}, 
\displaystyle{\sum_{t=0}^{n-1} (-1)^t c_t
\, = \, 0} \text{  when   }n \text{  is  even, and  }
c_{k,j}=c_{(j - k) \bmod n}\Bigr\}.
\end{eqnarray*}
We first prove that ${\mathcal D}_n \subset {\mathcal J}_{n,n-1}$.
From Theorem \ref{fundamental12} and (\ref{Hnn-1Sn}) we deduce that 
${\mathcal D}_n \subset {\mathcal C}_n={\mathcal H}_{n,n-1}$.
Therefore, if $C=$circ$({\mathbf{c}}) \in {\mathcal D}_n$,
then ${\mathbf{c}}$ is symmetric.

Now we prove that 
\begin{eqnarray}\label{zerosum1}
\displaystyle{\sum_{t=0}^{n-1} c_t = 0}.
\end{eqnarray}
Since $C \in {\mathcal C}_n$, the vector
$${\mathbf{u}}^{(0)} = 
%\dfrac{1}{\sqrt{n}} \, \, 
\Bigl( 1  \, \, 
 1 \, \, \cdots \, \,  1 \Bigr)^T$$
%\[
%\Bigl(\dfrac{1}{\sqrt{n}} \, \, \dfrac{1}{\sqrt{n}} \, \,
%\cdots \, \, \dfrac{1}{\sqrt{n}}
%\Bigr)^T
%\]
is an eigenvector for the eigenvalue 
$\lambda_0=0$. Hence, the formula (\ref{zerosum1})
is a consequence of (\ref{tee}). \\

Again by (\ref{tee}), we get $$\displaystyle{\sum_{t=0}^{n-1} (-1)^t c_t
\, = \, 0},$$ 
since the vector $$
{\mathbf{u}}^{(n/2)} = 
%\dfrac{1}{\sqrt{n}}
\Bigl( 1  \, -1 \, \, \, \, \, 1 \, -1 \, \cdots \, -1\Bigr)^T$$
%\[
%\mathbf{p}_{n,n/2} \,=\,
%\dfrac{\mathbf{u}_{n,n/2}}{\|\mathbf{u}_{n,n/2}\|} \, = \,
%\dfrac{1}{\sqrt{n}}
%\left(\cos\left(\frac{2\pi}{n}i\frac{n}{2}\right)\right)_{i=0}^{n-1}
%\, =\, \left(\dfrac{(-1)^i}{\sqrt{n}} \right)_{i=0}^{n-1}
%\]
is an eigenvector for the eigenvalue 
$\lambda_{n/2}=0$ if $n$ is even. 
Thus, ${\mathcal D}_n \subset {\mathcal J}_{n,n-1}$.

Now observe that ${\mathcal J}_{n,n-1}$ is a linear space 
of dimension $\lfloor (n-1)/2 \rfloor$. Thus, by Proposition \ref{bn},
${\mathcal D}_n $ and ${\mathcal J}_{n,n-1}$ have the same dimension.
So, ${\mathcal D}_n = {\mathcal J}_{n,n-1}$. This completes the proof.  
\end{proof}
%%%%\begin{comment} mcgplrcg
%VA SOLO SUL VIXRA
\begin{theorem}\label{Encharacterization}
The next result holds: 
\begin{eqnarray*}\label{chess}
{\mathcal E}_n ={\mathcal P}_n= {\mathcal L}_{n,n-1} \cap {\mathcal L}_{n,1},
\end{eqnarray*}
where
\begin{eqnarray*}
{\mathcal P}_n= \left\{
\begin{array}{ll}
\left\{ C \in{\mathbb{R}}^{n \times n:} \text{ there are } k_1, k_2
 \text{ with } c_{i,j}=\left\{
\begin{array}{ll}
 k_1 & \text{if  } i+j \text{  is  even}
\\ k_2 & \text{if  } i+j \text{  is  odd}
\end{array}\right. \, \, \right\} & \text{if  }n \text{  is  even}, \\
\\ \{ C \in{\mathbb{R}}^{n \times n:} \text{ there is } k
 \text{ with } c_{i,j}= k \text{ for all } i,j=0,1,\ldots, n-1 \}
 & \text{if  }n \text{  is  odd}.
\end{array} \right.
\end{eqnarray*}
\end{theorem}
\begin{proof}
We first claim that ${\mathcal E}_n = {\mathcal P}_n$.

We begin with the inclusion ${\mathcal E}_n \subset {\mathcal P}_n$.
Let $C \in {\mathcal E}_n$, $C=(c_{k,l})_{k.l}$ and 
$\Lambda=\text{diag}(\lambda_0$ 
$\lambda_1 \, \cdots \, \lambda_{n-1})$ be such that 
$\lambda_0=0$ for every
$j \in \{1, 2, \ldots, n-1\}$ except $n/2$ when $n$ is even,
and $C=Q_n \Lambda Q_n^T$.
We have 
\begin{eqnarray*}\label{lam1}
c_{k,l} = \sum_{j=0}^{n-1}
q_{k,j}^{(n)}\, \lambda_j \,q^{(n)}_{l,j}.
\end{eqnarray*} 
%Observe that $\lambda_0=0$ and $\lambda_{n/2}=0$, if
%$n$ is even. 
From this we get
\begin{eqnarray}\label{susy} c_{k,l}=
\left\{ \begin{array}{ll} \lambda_0
\, q^{(n)}_{k,0} \, q^{(n)}_{l,0} + \lambda_{n/2} 
\, q^{(n)}_{k,n/2} \, q^{(n)}_{l, n/2}  
& \text{if  } n \text{   is even,} \\ \\ \lambda_0 \, q^{(n)}_{k,0} \, q^{(n)}_{l,0}
& \text{if  } n \text{   is odd.}
\end{array}\right.
\end{eqnarray}
When $n$ is even, 
from 
(\ref{prestartingmatrix}) and (\ref{susy}) we deduce
\begin{eqnarray*}
c_{k,l}&=& 
\dfrac{\lambda_0}{n} + (-1)^{k-l} \, \dfrac{\lambda_{n/2}}{n}.
\end{eqnarray*}
Note that 
\begin{eqnarray*}
c_{k,l}=\left\{ \begin{array}{ll}
\dfrac{\lambda_0}{n}+\dfrac{\lambda_{n/2}}{n} & 
\text{if   } k-l \text{  is even,}\\ \\
\dfrac{\lambda_0}{n}-\dfrac{\lambda_{n/2}}{n} & 
\text{if   } k-l \text{  is odd,}
\end{array}\right.
\end{eqnarray*}
and thus $C \in {\mathcal P}_n$.

When $n$ is odd, from 
(\ref{prestartingmatrix}) and (\ref{susy}) we obtain
\begin{eqnarray*}
c_{k,l}&=& 
\dfrac{\lambda_0}{n}
\end{eqnarray*}
for $k$, $l \in \{0, 1, \ldots, n-1\}$.
Hence, $C \in {\mathcal P}_n$.

Now we prove that ${\mathcal P}_n \subset {\mathcal E}_n$.
First of all, note that ${\mathcal P}_n \subset {\mathcal C}_n$.
Let $C \in {\mathcal P}_n$. If $n$ is even, then rank$(C) \leq 2$, and
hence $C$ has at least $n-2$ eigenvalues equal to $0$. 
By contradiction, suppose that at least one of the eigenvalues different
from $\lambda_0$ and $\lambda_{n/2}$, say $\lambda_j$, 
is different from $0$. Thus, 
$\lambda_0=0$ or $\lambda_{n/2}=0$. If $\lambda_0=0$, then 
\begin{eqnarray*}\label{firstrow}
\displaystyle{\sum_{t=0}^{n-1} c_t = 0},
\end{eqnarray*}
and hence $k_1=-k_2$. Therefore, rank$(C)\leq 1$, and thus there
is at most one non-zero eigenvalue. This implies that $\lambda_{n/2}=0$,
and hence 
\begin{eqnarray}\label{leibnitz}
\displaystyle{\sum_{t=0}^{n-1} (-1)^t c_t
\, = \, 0}.
\end{eqnarray}
From (\ref{leibnitz}) it follows that $k_1=k_2=0$. Thus we deduce that
$C=O_n$, which obviously implies that $\lambda_j=0$. This is absurd,
and hence $\lambda_0 \neq 0$.

When $\lambda_{n/2}=0$, then (\ref{leibnitz}) holds, and hence
$k_1=k_2$. Thus, rank$(C)\leq 1$, which implies that $\lambda_0=0$,
because we know that $\lambda_j \neq 0$. This yields a contradiction.
Thus, ${\mathcal P}_n \subset {\mathcal E}_n$, at least when 
$n$ is even.

Now we suppose that $n $ is odd. If $C\in {\mathcal P}_n$, then 
rank$(C)\leq 1$. This implies that $C$ has at most a non-zero eigenvalue. We claim 
that $\lambda_j=0$ for all $j \in \{1,2,\ldots, n-1\}$.
By contradiction, suppose that there exists $q \in \{1,2,\ldots, n-1\}$
such that
$\lambda_q \neq 0$. Hence, $\lambda_0=0$, and thus  
\begin{eqnarray*}
\displaystyle{0=\sum_{t=0}^{n-1} c_t = n \,k}.
\end{eqnarray*}
This implies that
$C=O_n$. Hence, $\lambda_q=0$, which is absurd.
Therefore, ${\mathcal P}_n \subset {\mathcal E}_n$
even when $n$ is odd.

Now we claim that 
${\mathcal P}_n= {\mathcal L}_{n,n-1} \cap {\mathcal L}_{n,1}$.

Observe that, if $C \in {\mathcal P}_n$, then $C$ is
both circulant and reverse circulant, and hence 
${\mathcal P}_n\subset {\mathcal L}_{n,n-1} \cap {\mathcal L}_{n,1}$.
Now we claim that 
${\mathcal L}_{n,n-1} \cap {\mathcal L}_{n,1} \subset {\mathcal P}_n$.
Let $C \in {\mathcal L}_{n,n-1} \cap {\mathcal L}_{n,1}$. If
$\mathbf{c}=(c_0 \, c_1 \ldots c_{n-1})^T$ is the first row of $C$, 
and $C=$circ$(\mathbf{c})=$rcirc$(\mathbf{c})$, then we get
\begin{eqnarray*}
c_{1,j}= c_{(j-1) \bmod n}=c_{(j+1) \bmod n}, \quad 
j \in \{0, 1, \ldots, n-1 \}.
\end{eqnarray*}
If $n$ is even, then $c_{2j}=c_0$ 
and $c_{2j+1}=c_1$ for $j \in \{0, 1, \ldots, n/2 - 1 \}$,
while when $n$ is odd we have $c_j=c_0$ for $j \in \{0, 1, \ldots, n-1 \}$,
getting the claim.  
\end{proof}
\begin{theorem}
The following result holds:
%questo va solo sul VIXRA   mcgplrcg
$${\mathcal H}_{n,1} = {\mathcal B}_n \oplus {\mathcal E}_n.$$
\end{theorem}
\begin{proof}
By Theorem \ref{thm:A*n struc}, we know that
\begin{eqnarray*}
{\mathcal B}_n={\mathcal J}_{n,1}&=& 
\Bigl\{C \in {\mathbb{R}}^{n \times n}: \text{  there  is  a  symmetric }
 \mathbf{c}=
(c_0 \, c_1 \, \ldots \, c_{n-1})^T \in {\mathbb{R}}^n \text{   with   }\\ &&
\displaystyle{\sum_{t=0}^{n-1} c_t = 0} ,  
\displaystyle{\sum_{t=0}^{n-1} (-1)^t c_t
\, = \, 0 } \text{ if   }n \text{  is  even,  } 
\text{   and   }
c_{i,j}=c_{(j + i) \bmod n}\Bigr\}.
\end{eqnarray*}
Moreover, by Theorem \ref{Encharacterization} we have
\begin{eqnarray*}
{\mathcal E}_n&=& 
\Bigl\{C \in {\mathbb{R}}^{n \times n}: \text{  there  is  }
 \mathbf{c}=
(c_0 \, c_1 \, \ldots \, c_{n-1})^T \in {\mathbb{R}}^n \text{  such that:   }
\text{there is } k \in\mathbb{R} \text{   with   } \\ &&
c_t=k, \, \, t=0,1,\ldots, n-1 \text{  if  }n \text{  is  even,} \text{ and  }
%\text{there are } 
k_1,k_2 \in\mathbb{R} \text{   with   }
c_t=k_{((-1)^t+3)/2} \\ && \text{if   }  n \text{  is odd,  } \, \, t=0,1,\ldots, n-1,
\, c_{i,j}=c_{(j + i) \bmod n} \Bigr\}.
\end{eqnarray*}
First, we show that  
\begin{eqnarray}
\label{firstinclusio}
{\mathcal H}_{n,1} 
\supset {\mathcal B}_n \oplus {\mathcal E}_n.
\end{eqnarray}
Let $C=$rcirc$(\mathbf{c}) \in {\mathcal B}_n \oplus {\mathcal E}_n$. There are 
$C=$rcirc$(\mathbf{c}^{(r)})$, with 
$C^{(1)} \in {\mathcal B}_n$, $C^{(2)} \in {\mathcal E}_n$.
$r=1,2$, $C=C^{(1)}+C^{(2)}$, $
\mathbf{c}= \mathbf{c}^{(1)}+ \mathbf{c}^{(2)} $.
Note that $\mathbf{c}$ is symmetric, because both
$ \mathbf{c}^{(1)}$ and $\mathbf{c}^{(2)}$ are.
Thus, (\ref{firstinclusio}) is proved.

We now prove the converse inclusion. Let 
$C=$rcirc$(\mathbf{c}) \in {\mathcal H}_{n,1}$. Then,
$\mathbf{c}$ is symmetric. Suppose that $n$ is odd and
$$\sum_{t=0}^{n-1} c_t=\tau.$$
Let $\mathbf{c}^{(2)}=(\tau/n \, \, \tau/n \ldots \tau/n)^T$, 
$\mathbf{c}^{(1)}= \mathbf{c} - \mathbf{c}^{(2)}$, and 
$C=$rcirc$(\mathbf{c}^{(r)})$, $r=1,2$.
Then, $C=C^{(1)}+C^{(2)}$. Note that $C^{(2)}\in {\mathcal E}_n$.
Moreover, $\mathbf{c}^{(1)}$ is symmetric,
and $$\sum_{t=0}^{n-1} c^{(1)}_t=
\sum_{t=0}^{n-1}(c_t- \tau/n)=0.$$
Therefore, $C^{(1)}\in {\mathcal B}_n$. 

Now assume that $n$ is even. We get
\begin{eqnarray}\label{alpha}
\sum_{t=0}^{n-1} c_t=\tau= \dfrac{n}{2} \, (k_1 + k_2)
\end{eqnarray} and
\begin{eqnarray}\label{alphabis}
\sum_{t=0}^{n-1} (-1)^t \, c_t=\gamma_*= \dfrac{n}{2} \, (k_1 - k_2).
\end{eqnarray}
Then, 
\begin{eqnarray}\label{beta}
k_1=(\tau+\gamma_*)/n, \qquad k_2=(\tau-\gamma_*)/n.
\end{eqnarray}
Let $\mathbf{c}^{(2)}=(k_1 \, k_2 \ldots k_1 \, k_2)^T$,
$\mathbf{c}^{(1)}= \mathbf{c} - \mathbf{c}^{(2)}$, and 
$C=$rcirc$(\mathbf{c}^{(r)})$, $r=1,2$.
Then, $C=C^{(1)}+C^{(2)}$. Note that $C^{(2)}\in {\mathcal E}_n$.
Moreover, $\mathbf{c}^{(1)}$ is symmetric, and from 
(\ref{alpha}), (\ref{beta}) we obtain
$$\sum_{t=0}^{n-1} c^{(1)}_t=
\sum_{t=0}^{n-1}c_t - \dfrac{n}{2} \, (k_1 + k_2)=\tau-\tau=0.$$ 
Moreover, from 
(\ref{alphabis}), (\ref{beta}) we deduce
$$\sum_{t=0}^{n-1} (-1)^t \, c^{(1)}_t=
\sum_{t=0}^{n-1}  (-1)^t \,c_t - \dfrac{n}{2} \, (k_1 - k_2)=
\gamma_*-\gamma_*=0.$$ Thus, $C^{(1)}\in {\mathcal B}_n$. 

Moreover observe that, by Theorem \ref{fundamental12}, 
${\mathcal E}_n \subset {\mathcal C}_n$, and thanks to
Theorem \ref{directsum}, ${\mathcal C}_n$ and ${\mathcal B}_n$
are orthogonal. This implies that ${\mathcal E}_n$ and ${\mathcal B}_n$
are orthogonal. 
This ends the proof.  
\end{proof}
\section{Multiplication between a $\gamma$-matrix and a real vector} 
In this section we deal with the problem of computing the product
\begin{eqnarray}\label{primordiale}
\mathbf{y}= G \mathbf{x},
\end{eqnarray}
where $G \in {\mathcal G}_n$. 
%If $A \in {\mathcal C}_n$, such a product 
%can be computed by means of a classical Fourier or Hartley transform.
%(see, e.g., \cite{bortol}). 
Since $G$ is a $\gamma$-matrix, there is a diagonal matrix $\Lambda^{(G)} \in 
{\mathbb{R}}^{n \times n}$ with 
$G=Q_n \Lambda^{(G)} Q_n^T$, where $Q_n$ 
is as in (\ref{prestartingmatrix}). To compute 
\begin{eqnarray*}\label{e}
\mathbf{y}= Q_n \Lambda^{(G)} Q_n^T \mathbf{x}, 
\end{eqnarray*}
we proceed by doing the next operations
in the following order:
\begin{eqnarray}\label{2.2}
\mathbf{z}= Q_n^T \mathbf{x};
\end{eqnarray}
\begin{eqnarray}\label{2.3}
\mathbf{t}= \Lambda^{(G)} \mathbf{z};
\end{eqnarray}
\begin{eqnarray}\label{2.4}
\mathbf{y}= Q_n \mathbf{t}.
\end{eqnarray}
To compute the product in (\ref{2.2}), we define 
a new fast technique, which we will call
\emph{inverse discrete sine-cosine transform}
(IDSCT), while to do the operation in (\ref{2.4}), we define
a new technique, which will be called
\emph{discrete sine-cosine transform}
(DSCT). 

From now on, we assume that the involved vectors 
$\mathbf{x}$ belong to $\mathbb{R}^n$,
where $n=2^r$, $r \geq 2$, and we put $m
%=2^{s-1}
={n}/{2}$, 
$\nu=
%2^{s-2}=
{n}/{4}$.
\subsection{The ${\rm IDSCT}$ technique}
In this subsection
we present the technique to 
compute (\ref{2.2}).
%Let $s \geq 3$ be any positive integer, 
%$\alpha_k$, $k=0$, $1,\ldots, n-1$, be fixed real constants. For 
Given $\mathbf{x} \in {\mathbb{R}}^n$,
we denote by 
\begin{eqnarray}\label{IDSCT}
\textrm{IDSCT}(\mathbf{x})=Q_n^T \, \mathbf{x}=\left(
\begin{array}{c}
\alpha_0 \, {\textrm{C}}_0(\mathbf{x}) \\
\alpha_1 \, {\textrm{C}}_1(\mathbf{x})  \\ \vdots \\
\alpha_m \, {\textrm{C}}_m(\mathbf{x}) \\
\alpha_{m+1} \, {\textrm{S}}_{m-1}(\mathbf{x})\\ \vdots \\
\alpha_{n-1} \, {\textrm{S}}_1(\mathbf{x})
\end{array}\right) ,
\end{eqnarray}
where $\alpha_j$, $j=0$, $1, \ldots, n-1$, are as in 
(\ref{alphak}), 
\begin{eqnarray}\label{COS}
{\textrm{C}}_j(\mathbf{x})= \mathbf{x}^T {\mathbf{u}^{(j)}}, \quad
j=0, 1, \ldots, m,
\end{eqnarray}
\begin{eqnarray}\label{SIN}
{\textrm{S}}_j(\mathbf{x})= \mathbf{x}^T {\mathbf{v}^{(j)}},
\quad
j=1, 2, \ldots, m-1.
\end{eqnarray}

 For 
%$r\in \{ 1$, $2, \ldots, s\}$ and 
$\mathbf{x} = 
(x_0 \, x_1 \, \cdots x_{n-1} )^T \in \mathbb{R}^{n}$, 
let $\eta$ (resp., $\zeta$): $\mathbb{R}^n \to \mathbb{R}^{m}$ be
the function which associates to any vector $\mathbf{x}
\in \mathbb{R}^{n}$ 
the vector consisting of all even (resp., odd)
components of $\mathbf{x}$, that is
\begin{eqnarray*}\label{evenodd000} \nonumber 
\eta ( {\mathbf{x}})
&=& \eta \, \mathbf{x}= (
x^{\prime}_0 \, x^{\prime}_1 \, \ldots \, 
x^{\prime}_{m-1}),
\text{  where  } 
x^{\prime}_p =x_{2p},
%\text{  for all  } 
\, \, p=0, 1, \ldots, m-1; \\
\zeta ( {\mathbf{x}})&=&\zeta \, \mathbf{x}= (
x^{\prime \prime}_0 \, 
x^{\prime \prime}_1 \, \ldots \, 
x^{\prime \prime}_{m-1}), 
\text{  where  }
x^{\prime \prime}_p=x_{2p+1},
%\text{  for any  } 
\, \, p=0, 1, \ldots, m-1.
\end{eqnarray*} 
\begin{proposition}\label{coseno} \rm (see, e.g., \cite{gpcdm}) \em
% mcgplrcg aggiungere la citazione vixra
For $k=0, 1, \ldots, m$, we have
\begin{eqnarray}\label{zkcos}
{\rm{C}}_k(\mathbf{x})={\rm{C}}_k(\eta \, \mathbf{x})
+ \cos \Bigl( \dfrac{\pi \, k}{m} \Bigr) \, 
{\rm{C}}_k  (\zeta \, \mathbf{x}) - \sin \Bigl( \dfrac{\pi \, k}{m} \Bigr)
\, {\rm{S}}_k (\zeta\,\mathbf{x}).
\end{eqnarray}
%\begin{proposition}\label{seno}
%\rm (see, e.g., \cite{gpcdm}) \em
%% mcgplrcg aggiungere la citazione vixra
Moreover, for $k=1,2, \ldots, \nu$, it is
\begin{eqnarray}\label{zksin}
{\rm S}_k(\mathbf{x})={\rm{S}}_k(\eta \, \mathbf{x})
+ \cos \Bigl( \dfrac{\pi \, k}{m} \Bigr) \, 
{\rm{S}}_k(\zeta \, \mathbf{x}) + \sin \Bigl( \dfrac{\pi \, k}{m} \Bigr)
\, {\rm{C}}_k(\zeta \,\mathbf{x}).
\end{eqnarray}
\end{proposition}
%\end{proposition}
\begin{proof}
% mcgplrcg da togliere sull'articolo
We first prove (\ref{zkcos}). By
using (\ref{COS}),
% and (\ref{alphak}), 
for $k =0, 1,\ldots, m$ we have
%When $0 \leq k \leq m$, we have
\begin{eqnarray*}
{\textrm{C}}_k(\mathbf{x})&=&\mathbf{x}^T {\mathbf{u}^{(k)}}=
%&=&\sum_{j=0}^{n-1} \, q_{j,k} x_j= 
\nonumber
\sum_{j=0}^{n-1} x_j \cos \Bigl( \dfrac{2 \pi k j }{n} \Bigr)=
\\ &=&  \sum_{j \, \text{even}} x_j \nonumber
\cos \Bigl( \dfrac{2 \pi k j }{n} \Bigr) +  \sum_{j \, 
\text{odd}} x_j \cos \Bigl( \dfrac{2 \pi  k j}{n} \Bigr)=
\\ &=& \sum_{p=0}^{m-1} x_{2p} 
\cos \Bigl( \dfrac{4 \pi  k p }{n} \Bigr) +
\sum_{p=0}^{m-1} x_{2p+1} 
\cos \Bigl( \dfrac{2 \pi k (2p+1) }{n} \Bigr)= \nonumber
\\ &=& \sum_{p=0}^{m-1} x_{2p} 
\cos \Bigl( \dfrac{2 \pi  k p }{m} \Bigr) +
 \sum_{p=0}^{m-1} x_{2p+1} 
\cos \Bigl( \dfrac{\pi k (2p+1) }{m} \Bigr) \nonumber 
\\ &=& \sum_{p=0}^{m-1} x^{\prime}_{p} 
\cos \Bigl( \dfrac{2 \pi  k p }{m} \Bigr) +
 \sum_{p=0}^{m-1} x^{\prime \prime}_{p} 
\cos \Bigl( \dfrac{2 \pi k p }{m} +
\dfrac{\pi k }{m}\Bigr) = \\ &=&  \nonumber
 \sum_{p=0}^{m-1} x^{\prime}_{p} 
\cos \Bigl( \dfrac{2 \pi  k p }{m} \Bigr) +\\ &+&
 \sum_{p=0}^{m-1} x^{\prime \prime}_{p} 
\Bigl( \cos \Bigl( \dfrac{2 \pi k p }{m} \Bigr) \cos
\Bigl(\dfrac{\pi k }{m}\Bigr) - 
\sin \Bigl( \dfrac{2 \pi k p }{m} \Bigr) \sin
\Bigl(\dfrac{\pi k }{m}\Bigr) \Bigr) \nonumber
= \\ &=&  \nonumber
 \sum_{p=0}^{m-1} x^{\prime}_{p} 
\cos \Bigl( \dfrac{2 \pi  k p }{m} \Bigr) +\\ &+& \cos
\Bigl(\dfrac{\pi k }{m}\Bigr) 
\sum_{p=0}^{m-1} x^{\prime \prime}_{p} 
\cos \Bigl( \dfrac{2 \pi k p }{m}\Bigr)  - \sin
\Bigl(\dfrac{\pi k }{m}\Bigr) 
\sum_{p=0}^{m-1} x^{\prime \prime}_{p}
\sin \Bigl( \dfrac{2 \pi k p }{m} \Bigr) = \\ &=& 
{\textrm{C}}_k(\eta \, {\mathbf{x}})
+ \cos \Bigl( \dfrac{\pi \, k}{m} \Bigr) \, 
{\textrm{C}}_k(\zeta \, {\mathbf{x}}) - \sin \Bigl( \dfrac{\pi \, k}{m} \Bigr)
\, {\textrm{S}}_k(\zeta \, {\mathbf{x}}). \nonumber
\end{eqnarray*} 
We now turn to (\ref{zksin}).
%mcgplrcg togliere la dim dall'articolo
Using (\ref{SIN}), for $k=1,2, \ldots, \nu$ we get
%Moreover observe that, if $m+1 \leq k \leq n-1$, then 
\begin{eqnarray*}
{\textrm{S}}_k(\mathbf{x})&=&
%\sum_{j=0}^{n-1} \, q_{j,k} x_j= \nonumber
% \sum_{j=0}^{n-1} x_j 
%\sin \Bigl( \dfrac{2 \pi (n-k)j }{n} \Bigr)= \\ &=& \nonumber 
\sum_{j=0}^{n-1} x_j 
\sin \Bigl(2 \pi j - \dfrac{2 \pi k j}{n} \Bigr)= \sum_{j=0}^{n-1} x_j 
\sin \Bigl(\dfrac{2 \pi k j}{n} \Bigr)=
\\ &=& \nonumber \sum_{j \, \text{even}} x_j 
\sin \Bigl( \dfrac{2 \pi k j }{n} \Bigr) + \sum_{j \, 
\text{odd}} x_j \sin \Bigl( \dfrac{2 \pi  k j}{n} \Bigr)=
\\ &=& \sum_{p=0}^{m-1} x_{2p} 
\sin \Bigl( \dfrac{4 \pi  k p }{n} \Bigr) +
 \sum_{p=0}^{m-1} x_{2p+1} 
\sin \Bigl( \dfrac{2 \pi k (2p+1) }{n} \Bigr)= 
\\&=&  \sum_{p=0}^{m-1} x^{\prime}_{p} 
\sin \Bigl( \dfrac{2 \pi  k p }{m} \Bigr) +\nonumber
 \sum_{p=0}^{m-1} x^{\prime \prime}_{p} 
\sin \Bigl( \dfrac{2 \pi k p }{m} +
\dfrac{\pi k }{m}\Bigr) = \\ &=&  \nonumber
 \sum_{p=0}^{m-1} x^{\prime}_{p} 
\sin \Bigl( \dfrac{2 \pi  k p }{m} \Bigr) + \\&+&\nonumber
 \sum_{p=0}^{m-1} x^{\prime \prime}_{p} 
\Bigl( \sin \Bigl( \dfrac{2 \pi k p }{m} \Bigr) \cos
\Bigl(\dfrac{\pi k }{m}\Bigr) + 
\cos \Bigl( \dfrac{2 \pi k p }{m} \Bigr) \sin
\Bigl(\dfrac{\pi k }{m}\Bigr) \Bigr)
= \\ &=&  \nonumber
 \sum_{p=0}^{m-1} x^{\prime}_{p} 
\sin \Bigl( \dfrac{2 \pi  k p }{m} \Bigr) +  \cos
\Bigl(\dfrac{\pi k }{m}\Bigr)  
\sum_{p=0}^{m-1} x^{\prime \prime}_{p} 
\sin \Bigl( \dfrac{2 \pi k p }{m}\Bigr)  + \\&+&\sin
\Bigl(\dfrac{\pi k }{m}\Bigr) 
\sum_{p=0}^{m-1} x^{\prime \prime}_{p}
\cos \Bigl( \dfrac{2 \pi k p }{m} \Bigr)= \nonumber
\\ &=& {\textrm{S}}_k(\eta \, {\mathbf{x}})
+ \cos \Bigl( \dfrac{\pi \, k}{m} \Bigr) \, 
{\textrm{S}}_k(\zeta \, {\mathbf{x}}) + \sin \Bigl( \dfrac{\pi \, k}{m} \Bigr)
\, {\textrm{C}}_k(\zeta \, {\mathbf{x}}) .
\end{eqnarray*}
\end{proof}
\begin{lemma}\label{coseno2}
For $k=0,1,\ldots, \nu-1$, let $t=m-k$. 
We have
\begin{eqnarray}\label{zkcos0000}
{\rm{C}}_t(\mathbf{x})={\rm{C}}_k(\eta \, \mathbf{x})
- \cos \Bigl( \dfrac{\pi \, k}{m} \Bigr) \, 
{\rm{C}}_k(\zeta \,\mathbf{x}) + \sin \Bigl( \dfrac{\pi \, k}{m} \Bigr)
\, {\rm{S}}_k(\zeta \,\mathbf{x}).
\end{eqnarray}
%\label{seno2}
Furthermore, for
%$t=m-k$, 
$k=1,2, \ldots, \nu-1$
%let $t=m -k$. Then,
we get
\begin{eqnarray}\label{zksin2}
{\rm S}_t(\mathbf{x})={\rm{S}}_k(\eta \,\mathbf{x})
- \cos \Bigl( \dfrac{\pi \, k}{m} \Bigr) \, 
{\rm{S}}_k(\zeta \, \mathbf{x})+ \sin \Bigl( \dfrac{\pi \, k}{m} \Bigr)
\, {\rm{C}}_k(\zeta \,\mathbf{x}).
\end{eqnarray}
\end{lemma}

\begin{proof}  We begin with (\ref{zkcos0000}).
For $t=\nu+1, \, \nu +2, \ldots, m$, we have
\begin{eqnarray*}\label{cost}
{\rm{C}}_t(\mathbf{x})&=& \nonumber
\sum_{p=0}^{m-1} x_{2p} 
\cos \Bigl( \dfrac{4 \pi  t p }{n} \Bigr) +
\sum_{p=0}^{m-1} x_{2p+1} 
\cos \Bigl( \dfrac{2 \pi t (2p+1) }{n} \Bigr)= \\ &=&
\nonumber
\sum_{p=0}^{m-1} x_{2p} 
\cos \Bigl( \dfrac{4 \pi  (m-k) p }{n} \Bigr) +
\sum_{p=0}^{m-1} x_{2p+1} 
\cos \Bigl( \dfrac{2 \pi (m-k) (2p+1) }{n} \Bigr)= \\ &=&
\sum_{p=0}^{m-1} x^{\prime}_{p} 
\cos \Bigl( 2 \pi  p- \dfrac{2 \pi  k p }{m} \Bigr) +
\sum_{p=0}^{m-1} x^{\prime \prime}_{p} 
\cos \Bigl( \pi (2p+1) -
\dfrac{\pi k (2p+1)}{m}\Bigr) \nonumber = \\ &=& 
%la terza riga va commentata sull'articolo mcgplrcg
\sum_{p=0}^{m-1} x^{\prime}_{p} 
\cos \Bigl( \dfrac{2 \pi  k p }{m} \Bigr) -
\sum_{p=0}^{m-1} x^{\prime \prime}_{p}
\cos \Bigl( \dfrac{\pi k (2p+1)}{m}\Bigr)= \\&=& \nonumber
\sum_{p=0}^{m-1} x^{\prime}_{p} 
\cos \Bigl( \dfrac{2 \pi  k p }{m} \Bigr) - \cos
\Bigl(\dfrac{\pi k }{m}\Bigr) 
\sum_{p=0}^{m-1} x^{\prime \prime}_{p} 
\cos \Bigl( \dfrac{2 \pi k p }{m}\Bigr)  + \\ &+& \sin
\Bigl(\dfrac{\pi k }{m}\Bigr) 
\sum_{p=0}^{m-1} x^{\prime \prime}_{p}
\sin \Bigl( \dfrac{2 \pi k p }{m} \Bigr)= \nonumber
\\&=& \nonumber {\rm C}_k(\eta \, {\mathbf{x}}) - \cos
\Bigl(\dfrac{\pi k }{m}\Bigr) 
{\rm C}_k(\zeta \,{\mathbf{x}})  + \sin
\Bigl(\dfrac{\pi k }{m}\Bigr) 
{\rm S}_k(\zeta  \, {\mathbf{x}}). 
%\qquad \Box  
\nonumber
\end{eqnarray*}
Now we turn to (\ref{zksin2}).
For $t=\nu+1,  \, \nu +2, \ldots, m-1$, it is
\begin{eqnarray*}\label{sint}
{\rm S}_t(\mathbf{x})&=& \nonumber
\sum_{p=0}^{m-1} x_{2p} 
\sin \Bigl( \dfrac{4 \pi  t p }{n} \Bigr) +
\sum_{p=0}^{m-1} x_{2p+1} 
\sin \Bigl( \dfrac{2 \pi t (2p+1) }{n} \Bigr)= \\ &=&
\nonumber
\sum_{p=0}^{m-1} x_{2p} 
\sin \Bigl( \dfrac{4 \pi  (m-k) p }{n} \Bigr) +
\sum_{p=0}^{m-1} x_{2p+1} 
\sin \Bigl( \dfrac{2 \pi (m-k) (2p+1) }{n} \Bigr)= \\ &=&
\sum_{p=0}^{m-1} x^{\prime}_{p} 
\sin \Bigl( 2 \pi  p- \dfrac{2 \pi  k p }{m} \Bigr) +
\sum_{p=0}^{m-1} x^{\prime \prime}_{p} 
\sin \Bigl( \pi (2p+1) -
\dfrac{\pi k (2p+1)}{m}\Bigr) \nonumber =
%la terza riga va commentata sull'articolo mcgplrcg
 \\ &=& 
- \sum_{p=0}^{m-1} x^{\prime}_{p} 
\sin \Bigl( \dfrac{2 \pi  k p }{m} \Bigr) +
\sum_{p=0}^{m-1} x^{\prime \prime}_{p}
\sin \Bigl( \dfrac{\pi k (2p+1)}{m}\Bigr)= \\&=& \nonumber
-\sum_{p=0}^{m-1} x^{\prime}_{p} 
\sin \Bigl( \dfrac{2 \pi  k p }{m} \Bigr) + \sin
\Bigl(\dfrac{\pi k }{m}\Bigr) 
\sum_{p=0}^{m-1} x^{\prime \prime}_{p} 
\cos \Bigl( \dfrac{2 \pi k p }{m}\Bigr)  + \\ &+& \cos
\Bigl(\dfrac{\pi k }{m}\Bigr) 
\sum_{p=0}^{m-1} x^{\prime \prime}_{p}
\sin \Bigl( \dfrac{2 \pi k p }{m} \Bigr)= \nonumber
\\&=& \nonumber -{\rm S}_k(\eta \, {\mathbf{x}}) + \sin
\Bigl(\dfrac{\pi k }{m}\Bigr) 
{\rm C}_k(\zeta \, {\mathbf{x}}) + \cos
\Bigl(\dfrac{\pi k }{m}\Bigr) 
{\rm S}_k(\zeta \, {\mathbf{x}}). \nonumber
\end{eqnarray*}
\end{proof}
%\begin{remark} \rm 
Note that \begin{eqnarray}\label{zkcos0}
{\rm{C}}_0(\mathbf{x})={\rm{C}}_0(\eta \, \mathbf{x})
+{\rm{C}}_0(\zeta \, \mathbf{x}),
\end{eqnarray}
\begin{eqnarray}\label{zkcosparticular}
{\rm{C}}_m(\mathbf{x})={\rm{C}}_0(\eta \, \mathbf{x})
- {\rm{C}}_0(\zeta \, \mathbf{x}).
\end{eqnarray}
Since ${\rm{S}}_{\nu}(\mathbf{x})=0$ whenever $\mathbf{x}
\in {\mathbb{R}}^m$, then
\begin{eqnarray}\label{zkcos111}
{\rm{C}}_{\nu}(\mathbf{x})={\rm{C}}_{\nu}(\eta \, \mathbf{x})
- \, {\rm{S}}_{\nu}(\zeta \, \mathbf{x})=
{\rm{C}}_{\nu}(\eta \, \mathbf{x})
\end{eqnarray} and
\begin{eqnarray}\label{zksinparticular}
{\rm S}_{\nu}(\mathbf{x})={\rm{S}}_{\nu}(\eta \, \mathbf{x})
+ {\rm{C}}_{\nu}(\zeta \, \mathbf{x})= {\rm{C}}_{\nu}(\zeta 
\, \mathbf{x}).
\end{eqnarray}
%\end{remark}
We call $\rho:\mathbb{R}^{n} \to \mathbb{R}^{m}$ 
that function which reverses 
all components of a given vector $\mathbf{x}
\in \mathbb{R}^n $ but the 
$0$-th component, and $\sigma$ (resp., $\alpha$):
$\mathbb{R}^n \to \mathbb{R}^n$ 
that function which associates to every vector
$\mathbf{x}
\in \mathbb{R}^n $ the double 
of its symmetric (resp., asymmetric) part, namely
\begin{eqnarray*}\label{trick0}
\rho(\mathbf{x}) &=& (x_0 \, x_{n-1} \, \nonumber
x_{n-2} 
 \cdots x_2 \, x_1 )^T,  \\
\sigma  (\mathbf{x})=\sigma \, \mathbf{x}  &=& 
{\mathbf{x}}
+\rho(\mathbf{x}), \, \,  \quad
\alpha  (\mathbf{x}) = \alpha\, \mathbf{x} =
{\mathbf{x}}-\rho(\mathbf{x}).  \nonumber
\end{eqnarray*} 
Note that 
\begin{eqnarray}\label{symmasymm}
{\mathbf{x}}=
\dfrac{\sigma \,\mathbf{x}+\alpha \, \mathbf{x}}{2}.
\end{eqnarray}
%We get
%\begin{eqnarray*}\label{eurekac}
%{\rm C}_k(\mathbf{x})={\rm C}_k(\eta \, {\mathbf{x}})+
%{\rm C}_k(\zeta \, {\mathbf{x}}), \, \, \mathbf{x} \in \mathbb{R}^n,
%\, \, k=0, 1, \ldots, m,
%\end{eqnarray*}
%\begin{eqnarray*}\label{eurekas}
%{\rm S}_k(\mathbf{x})= {\rm S}_k(\eta \, {\mathbf{x}})+
%{\rm S}_k(\zeta \, {\mathbf{x}}),
%\, \, \mathbf{x} \in \mathbb{R}^n,
%\, \, k=1, 2, \ldots, m-1.
%\end{eqnarray*}
%So, the investigated problem reduces to compute
%the expressions
%${\rm C}_k(\mathbf{x})$, $k=0, 1, \ldots, m$, and
%${\rm S}_k(\mathbf{x})$, $k=1, 2, \ldots, m-1$, 
%by means of the minimum possible of operations,
%given $\mathbf{x} \in \mathbb{R}^n$.
%
%We have
%\begin{eqnarray}\label{vp}
%v^{\prime}_{m-p}=v^{\prime}_p 
%\text{  for all   }
%p \in \{1, 2, \ldots, m-1\} \setminus \{m/2\}: 
%\end{eqnarray}
%indeed, for such $p$'s, we get
%$v^{\prime}_{m-p}=v_{2(m-p)}=v_{2m-2p}=
%v_{2p}=v^{\prime}_p$. 
%Analogously as above, it is possible to check that
%\begin{eqnarray}\label{wp}
%w^{\prime}_{m-p}=-w^{\prime}_p \text{  for all   }
%p \in \{1, 2, \ldots, m-1\},
%\end{eqnarray} and hence $w^{\prime}_{m/2}=0$.
%Moreover, we have $w^{\prime}_0 =
%w_0=0$.
%%%%%%%%%%
%Moreover, we get
%$$v^{\prime \prime}_{m-1-p}=v^{\prime \prime}_p:$$ 
%indeed, 
%$$v^{\prime \prime}_{m-1-p}=v_{2(m-1-p)+1}=
%v_{2m-2(m-1-p)-1}=v_{2p+1}=
%v^{\prime \prime}_p.$$
%Analogously, it is possible to see that 
%$w^{\prime \prime}_{m-1-p}=-w^{\prime \prime}_p$.

Now we state the next technical results.
%\begin{proposition}
%Let $q$ be a fixed even positive integer, $\mathbf{a} = 
%(a_0 \, a_1 \, \cdots a_{q-1} )^T$,
%$\mathbf{w}$ $= 
%(w_0 $ $ w_1 \, \cdots w_{q-1} )^T$
%$\in \mathbb{R}^q$
%be vectors such that $a_{q-j}=a_j$ for every 
%$j \in \{1$, $2, \ldots, q-1\}$. Moreover, suppose that
%$w_{q-j}=-w_j$ and $w_0=w_{q/2}=0$, 
%or $w_{q-1-j}=-w_j$, 
%for all $j \in \{1$, $2, \ldots, q-1\}$.
%
%Then, $\mathbf{w}^T \cdot \mathbf{a}=0$.
%\end{proposition}
%%\begin{proof} 
%If $w_0=w_{q/2}=0$ and 
%$w_{q-j}=-w_j$ for every 
%$j \in \{1$, $2, \ldots, q-1\}$, then
%%We get:
%%\begin{eqnarray}\label{scalarproductzero1}
%%\mathbf{w}^T \cdot \mathbf{a}&=& \sum_{j=0}^{q-1} 
%%w_j \, a_j=\sum_{j=1}^{q-1} 
%%w_j \, a_j= \sum_{j=1}^{q/2-1} 
%%w_j \, a_j + w_{q/2} a_{q/2} + \sum_{j=q/2+1}^q 
%%w_j \, a_j = \\ &=& \sum_{j=1}^{q/2-1} 
%%w_j \, a_j + \sum_{j=1}^{q/2-1} w_{q-j} a_{q-j}=
%%\sum_{j=1}^{q/2-1} w_j \, a_j - \sum_{j=1}^{q/2-1} 
%%w_j \, a_j =0. \nonumber 
%%\end{eqnarray} 
%If $w_{q-1-j}=-w_j$ for any
%$j \in \{1$, $2, \ldots, q-1\}$, then 
%\begin{eqnarray}\label{scalarproductzero2}
%\mathbf{w}^T \cdot \mathbf{a}&=& \sum_{j=0}^{q-1} 
%w_j \, a_j= \sum_{j=0}^{q/2-1} 
%w_j \, a_j + \sum_{j=q/2}^{q-1}
%w_j \, a_j = \\ &=& \sum_{j=0}^{q/2-1} 
%w_j \, a_j + \sum_{j=0}^{q/2-1} w_{q-1-j} \, a_{q-1-j}=
%\sum_{j=0}^{q/2-1} w_j \, a_j - \sum_{j=0}^{q/2-1} 
%w_j \, a_j =0. \nonumber 
%\end{eqnarray} 
%%\end{proof}
\begin{lemma}\label{matemagica}
Let $\mathbf{a} = 
(a_0 \, a_1 \, \cdots a_{n-1} )^T$,
$\mathbf{b} = 
(b_0 \, b_1 \, \cdots b_{n-1} )^T$
$\in \mathbb{R}^n$
be such that $\mathbf{a}$ is symmetric and 
$\mathbf{b}$ is asymmetric.
%\begin{eqnarray}\label{conditionsv}
%a_{q-j}=a_j \text{  for  all  }j \in 
%\{1, 2, \ldots, q-1\} \setminus \{q/2\},
%\end{eqnarray}
%\begin{eqnarray}\label{conditionsw}
%b_0=0,
%%b_{q/2}=0,
%\text{   and   } b_{q-j}=-b_j \text{  for  any  }
%j \in \{1, 2, \ldots, q-1\}.
%\end{eqnarray} 
Then, $\mathbf{a}^T \, \mathbf{b}=0$.
\end{lemma}
\begin{proof} 
%If $w_0=w_{q/2}=0$ and 
%$w_{q-j}=-w_j$ for every 
%$j \in \{1$, $2, \ldots, n-1\}$, then
First of all, we observe that 
%from (\ref{conditionsw}) 
%it follows that 
$b_0=b_{n/2}=0$. So, we have
\begin{eqnarray*}\label{scalarproductzero3}
\mathbf{a}^T \, \mathbf{b}&=& \sum_{j=0}^{n-1} 
a_j \, b_j=\sum_{j=1}^{n-1} 
a_j \, b_j= \sum_{j=1}^{n/2-1} 
a_j \, b_j + a_{n/2}\, b_{n/2} + \sum_{j=n/2+1}^n
a_j \, b_j = \\ &=& \sum_{j=1}^{n/2-1} 
a_j \, b_j + \sum_{j=1}^{n/2-1} a_{n-j} \, b_{n-j}=
\sum_{j=1}^{n/2-1} a_j \, b_j - \sum_{j=1}^{n/2-1} 
a_j \, b_j =0.  \nonumber 
\end{eqnarray*} 
\end{proof}
\begin{corollary} 
Let ${\mathbf x} \in {\mathbb R}^{n}$ be a symmetric vector
and $k \in \{ 0, 1, \ldots, n-1\}$.
Then, we get
\begin{eqnarray}\label{sinez}
{\rm S}_k(\mathbf{x})=0. 
\end{eqnarray} 
\end{corollary}
\begin{proof}
Note that ${\rm S}_k(\mathbf{x})= {\mathbf{x}}^T \,
{\mathbf{v}^{(k)}}$.
The formula (\ref{sinez}) follows from Lemma \ref{matemagica},
since $\mathbf{x}$ is symmetric and 
${\mathbf{v}^{(k)}}$ is asymmetric.  
\end{proof}
\begin{corollary} 
Let ${\mathbf x} \in {\mathbb R}^{n}$ be an asymmetric vector
%be an asymmetric vector and
and $k \in \{ 0, 1, \ldots, n-1\}$.
Then, we have
\begin{eqnarray}\label{cosinez000}
{\rm C}_k(\mathbf{x})=0. 
\end{eqnarray} 
\end{corollary}
\begin{proof}
Observe that ${\rm C}_k(\mathbf{x})= \mathbf{x}^T
{\mathbf{u}^{(k)}}$.
The formula (\ref{cosinez000}) 
is a consequence of Lemma \ref{matemagica},
because ${\mathbf{u}^{(k)}}$ is symmetric
and $\mathbf{x}$ is asymmetric.  
\end{proof}
\begin{lemma} 
Let ${\mathbf x} \in {\mathbb R}^{n}$
and $k \in \{ 0, 1, \ldots, m\}$. Then we get
\begin{eqnarray}\label{cosineeven}
C_k(\sigma \, \mathbf{x})=2 \, C_k(\mathbf{x}).
\end{eqnarray}
Moreover, if
$k \in \{ 1, 2, \ldots, m-1\}$, then 
\begin{eqnarray}\label{sineodd}
{\rm S}_k(\alpha \, \mathbf{x})=2 \, S_k(\mathbf{x}).
\end{eqnarray}
\end{lemma}
\begin{proof}
We begin with proving  (\ref{cosineeven}). 
For every $j \in \{0, 1, \ldots, n -1 \}$ we have
\begin{eqnarray*}
C_k(\sigma \, \mathbf{x}) &=& 2 x_0 +\sum_{j=1}^{n-1} x_j u^{(k)}_j +
\sum_{j=1}^{n-1} x_{n-j} u^{(k)}_j =
\\ &=& 2 x_0 + \sum_{j=1}^{n-1} x_j u^{(k)}_j +
2 \sum_{j=1}^{n-1} x_j u^{(k)}_j =
2 \sum_{j=0}^{n-1} x_j u^{(k)}_j =
2 \, C_k(\mathbf{x}).
%\qquad \Box 
\end{eqnarray*}
Now we turn to (\ref{sineodd}). 
For any $j \in \{0, 1, \ldots, n -1 \}$,
since $v^{(k)}_0=0$, we get
\begin{eqnarray*}
S_k(\alpha \, \mathbf{x}) &=& \sum_{j=1}^{n-1} x_j 
v^{(k)}_j -
\sum_{j=1}^{n-1} x_{n-j} v^{(k)}_j =\sum_{j=1}^{n-1} x_j v^{(k)}_j +
\sum_{j=1}^{n-1} x_{n-j} v^{(k)}_{n-j}= \\&=&
2 \sum_{j=1}^{n-1} x_j v^{(k)}_j =
2 \sum_{j=0}^{n-1} x_j v^{(k)}_j =
2 \, S_k(\mathbf{x}). 
\end{eqnarray*}
\end{proof}

Observe that from (\ref{IDSCT}), (\ref{symmasymm}), (\ref{sinez})
and (\ref{cosinez000}), taking into account that 
$\sigma \, \mathbf{x}$ is symmetric and 
$\alpha \, \mathbf{x}$ is asymmetric, we obtain
\begin{eqnarray}\label{IDSCTbis}
\textrm{IDSCT}(\mathbf{x})&=&Q_n^T \, \mathbf{x}
%%Q_n^T \, \dfrac{\sigma \, \mathbf{x}+\alpha \, \mathbf{x}}{2}=
%\left(
%\begin{array}{c}
%{\alpha_0} \, \dfrac{{\textrm{C}}_0(\sigma \, \mathbf{x})+
%{\textrm{C}}_0(\alpha \, \mathbf{x})}{2} \\ \\ {\alpha_1} \, \dfrac{{\textrm{C}}_1(\sigma \, \mathbf{x})+
%{\textrm{C}}_1(\alpha \, \mathbf{x})}{2} \\ \vdots \\
%%\alpha_1 \, {\textrm{C}}_1(\mathbf{x})  \\ \vdots \\
%%\alpha_m \, {\textrm{C}}_m(\mathbf{x}) \\
%{\alpha_m} \, \dfrac{{\textrm{C}}_m(\sigma \, \mathbf{x})+
%{\textrm{C}}_m(\alpha \, \mathbf{x})}{2} \\ \\
%{\alpha_{m+1}} \, \dfrac{{\textrm{S}}_{m-1}(\sigma \, \mathbf{x})+
%{\textrm{S}}_{m-1}(\alpha \, \mathbf{x})}{2} \\  \vdots \\ 
%{\alpha_{n-1}} \, \dfrac{{\textrm{S}}_1(\sigma \, \mathbf{x})+
%{\textrm{S}}_1(\alpha \, \mathbf{x})}{2}
%%\alpha_{m+1} \, {\textrm{S}}_{m-1}(\mathbf{x})\\ \vdots \\
%%\alpha_{n-1} \, {\textrm{S}}_1(\mathbf{x})
%\end{array}\right) 
%%\nonumber  
%%\\ 
%%\nonumber  
%%\\ &=&
=
\left(
\begin{array}{c}
{\alpha_0} \, \dfrac{{\textrm{C}}_0(\sigma \, \mathbf{x})
}{2} \\ \\ {\alpha_1} \, \dfrac{{\textrm{C}}_1(\sigma \, \mathbf{x})
}{2} \\ \vdots \\
{\alpha_m} \, \dfrac{{\textrm{C}}_m(\sigma \, \mathbf{x})
}{2} \\ \\
{\alpha_{m+1}} \, \dfrac{
{\textrm{S}}_{m-1}(\alpha \, \mathbf{x})}{2} \\  \vdots \\ 
{\alpha_{n-1}} \, \dfrac{
{\textrm{S}}_1(\alpha \, \mathbf{x})}{2}
\end{array}\right) .
\end{eqnarray}
Thus, to compute IDSCT($\mathbf{x}$),
we determine the values 
${\rm C}_k(\sigma \, \mathbf{x})$ for $k=0,1,\ldots, m$ 
and ${\rm S}_k(\alpha \, \mathbf{x})$ for $k=1,2,\ldots, m-1$,
and successively we multiply such values by
the constants $\alpha_{k}/2$, $k=0,1,\ldots, n-1$.
Now we give the following
\begin{lemma}\label{instrcos}
Let ${\mathbf{x}} \in {\mathbb{R}}^n$ be symmetric. Then
for $k=1, 2,\ldots, \nu-1$ we get
\begin{eqnarray}\label{cosinfty}
{\rm C}_k ({\mathbf{x}})=
{\rm C}_k (\eta \, {\mathbf{x}})+\dfrac{1}{2 \, \cos 
\Bigl( \dfrac{\pi \, k }{m} \Bigr)} \, \, {\rm C}_k 
(\sigma \, \zeta \, {\mathbf{x}}).
\end{eqnarray}
Moreover, if $t=m-k$, then
\begin{eqnarray}\label{new}
{\rm C}_t(\mathbf{x})=
{\rm C}_k(\eta \, {\mathbf{x}})- \dfrac{1}{2 \, \cos
\Bigl(\dfrac{\pi k }{m}\Bigr) }
\,{\rm C}_k(\sigma \, \zeta \, {\mathbf{x}}).
\end{eqnarray}
\end{lemma}
\begin{proof}
We begin with (\ref{cosinfty}).
Since $\mathbf{x}$ is symmetric, we have
\begin{eqnarray}\label{00s}
{\rm S}_k(\mathbf{x})={\rm{S}}_k(\eta \, {\mathbf{x}})=0.
\end{eqnarray}
From (\ref{zksin}) and (\ref{00s}) we obtain 
\begin{eqnarray}\label{provvdz}
{\rm S}_k(\zeta \, {\mathbf{x}})= - \dfrac{\sin
\Bigl(\dfrac{\pi k }{m}\Bigr)}{\cos\Bigl(\dfrac{\pi k }{m}\Bigr)} 
\,{\rm C}_k(\zeta \, {\mathbf{x}}).
\end{eqnarray}
From (\ref{zkcos}), (\ref{cosineeven}) and (\ref{provvdz}) 
we get (\ref{cosinfty}). The equality in
%Now we prove (\ref{new}).
%Since $\mathbf{x}$ is symmetric, then (\ref{provvdz}) holds. Thus
(\ref{new}) follows from (\ref{zkcos0000}), (\ref{cosineeven}) and (\ref{provvdz}).  
\end{proof}

Now we define the following algorithm:
\begin{algorithmic}[1]
%\begin{algorithm}
%  \caption{An algorithm}
%  \begin{algorithmic}[1]
\STATE{{\bf{function CS$({\bf x},n)$}}}
\IF{$n=4$}
\STATE{$c_0={x}_0+ 2 {x}_1 + {x}_2$;}
\STATE{$c_1={x}_0 - x_2;$}
\STATE{$c_2=x_0 - 2  {x}_1 +  {x}_2$;}
\ELSE
%\STATE{$m=n/2$;}
%\STATE{$\nu=n/4$;}
\STATE{$\widetilde{\bf c}=$CS$(\eta \, {\bf x}, 
n/2)$;} 
\STATE{$\overline{\bf c}=$CS$(\sigma
\, \zeta \, {\bf x}, 
 n/2)$;} 
\FOR{$k=1, 2, \ldots n/4-1$}
\STATE{$aux=1/(2 \cos (2 \, \pi \, k /n)) \, \overline{c}_k$;}
\STATE{$c_k= \widetilde{c}_k + aux$;}
\STATE{$c_{n/2-k}= \widetilde{c}_k - aux$;}
\ENDFOR
\STATE{$aux= \overline{c}_0/2$;}
\STATE{$c_0=\widetilde{c}_0 + aux$;}
\STATE{$c_{n/4}=\widetilde{c}_{n/4}$;}
\STATE{$c_{n/2}=\widetilde{c}_0 - aux$;}
\ENDIF
\RETURN{${\bf c}$}
\end{algorithmic}
%\end{algorithm}
\begin{lemma}\label{presymm}
For each symmetric vector ${\mathbf{x}} \in{\mathbb{R}}^n$,
the vector $\eta \, {\mathbf{x}} \in {\mathbb{R}}^m$ is symmetric.
\end{lemma}
\begin{proof}
Let $\widetilde{\mathbf{x}}=\eta \, {\mathbf{x}}$, where ${\mathbf{x}}$ 
is a symmetric vector.
So, we have
\begin{eqnarray}\label{symmsymm}
\widetilde{x}_j={{x}}_{2j}={{x}}_{n-2j}={{x}}_{2(n/2-j)}=\widetilde{x}_{n/2-j},
\quad j=0,1,\ldots m.
\end{eqnarray}
Since $\widetilde{\mathbf{x}} \in {\mathbb{R}}^m$, from 
(\ref{symmsymm}) 
it follows that $\widetilde{\mathbf{x}}$ is symmetric.
 
\end{proof}
\begin{theorem}  \label{tomove}
Let $\mathbf{x} \in {\mathbb{R}}^n$ be a symmetric vector and
${\bf c}=$
{\rm CS}$({\mathbf{x}}, n)$. Then,
\begin{eqnarray*}\label{yjcoseno}
c_j={\rm C}_j(\mathbf{x})=\mathbf{x}^T {\mathbf{u}}^{(j)},
\quad j=0,1,\ldots, m.
\end{eqnarray*}
\end{theorem}
\begin{proof}
First of all we prove that, when we call the function CS,
the first variable is symmetric.
Concerning the external call, the vector $\mathbf{x}$ 
is symmetric in CS. Now we consider the internal calls 
of CS at lines 7 
and 8.
By Lemma \ref{presymm}, $\eta \, {\mathbf{x}}$
is symmetric. 
Hence, line 7 of our algorithm satisfies the assertion.
Furthermore, it is readily seen that the assertion is fulfilled also by line 8.

Now, let us suppose that $n=4$. Then, 
\begin{eqnarray*}\label{c0}
c_0= {\mathbf{x}}^T \, {\mathbf{u}}^{(0)} \, =
x_0 + x_1 + x_2 + x_3.
\end{eqnarray*}
Since ${\mathbf{x}}$ is even, we get the formula at line 3 of 
the algorithm CS. Moreover, 
\begin{eqnarray*}
{\mathbf{u}}^{(1)} = (1 \, \, \, \, \, \, 0 \, \, -1 \, \, \,\, \,  \, 0)^T,
\end{eqnarray*}
and hence 
\begin{eqnarray*}
c_1= {\mathbf{x}}^T \, {\mathbf{u}}^{(1)} \, =
{{x}}_0 - {{x}}_2 ,
\end{eqnarray*}
obtaining the formula at line 4 of 
the algorithm CS. Furthermore, 
\begin{eqnarray*}
{\mathbf{u}}^{(2)} = (1 \, \, -1 \, \, \, \, \, \, 1 \, \, -1)^T,
\end{eqnarray*}
and thus we have
\begin{eqnarray*}\label{c2}
c_2= {\mathbf{x}}^T \, {\mathbf{u}}^{(2)} \, =
{{x}}_0 - 2 \, {{x}}_1 + {{x}}_2 ,
\end{eqnarray*}
since ${\mathbf{x}}$ is symmetric. So, we obtain
the formula at line 5 of 
the algorithm CS.

Now we consider the case $n >4$. Since ${\mathbf{x}}$ is symmetric,
from Lemma \ref{instrcos} we deduce that line 11
of the algorithm CS gives the correct value 
of ${\rm C}_k({\mathbf{x}})$
for $k=1$, $2, \ldots, \nu -1$. As ${\mathbf{x}}$ is symmetric,
from Lemma \ref{instrcos} we obtain that line 12 gives the exact value 
of ${\rm C}_{m-k}({\mathbf{x}})$ for $k=1$, $2, \ldots, \nu -1$.

From (\ref{zkcos0}) and (\ref{cosineeven}) we deduce that line 15
of the function CS gives the correct value of 
${\rm C}_0({\mathbf{x}})$, and by (\ref{zkcosparticular})
and (\ref{cosineeven}) we obtain
that line 17 of the function CS gives the exact value of 
${\rm C}_m({\mathbf{x}})$. Furthermore, by virtue of
(\ref{zkcos111}), line 16 of the function CS gives the correct value of
${\rm{C}}_{\nu}({\mathbf{x}})$.
This completes the proof.  
\end{proof}
\begin{theorem}\label{tomove3}
Suppose to have a library in which the values
$1/(2 \cos (2 \, \pi \, k /n))$ have been stored 
for $n=2^r$, $r \in\mathbb{N}$,
$r \geq 2$, and $k \in \{ 0$, $1, \ldots, n-1\}$.
Then, the computational cost of a call of the function {\rm CS} 
is given by 
\begin{eqnarray}\label{lawadd0bis}
A(n)=\dfrac34 n \, \log_2 n - \dfrac12 n +1,
\end{eqnarray}
\begin{eqnarray}\label{lawmult0bis}
M(n)= \dfrac14 n\, \log_2 n+ \dfrac12 n  -2,
\end{eqnarray}
where $A(n)$ \rm (\em resp., $M(n)$\rm) \em denotes the number of
additions \rm(\em resp., multiplications\rm) \em requested to compute {\rm CS}.
\end{theorem}
\begin{proof}
Concerning line 8, we first compute
$\sigma \, \zeta \, {\mathbf{x}}$. To do this, 
$\nu -1$ sums and $2$ multiplications are required. 
To compute the {\bf for} loop at lines 9-13, 
$2\, \nu - 2$ sums and $\nu - 1 $ multiplications
are necessary. Moreover, to compute lines 14-17,
2 sums and one multiplication are necessary.
Thus, the total number of additions 
is given by 
\begin{eqnarray}\label{ricorsivabis}
A(n)=\dfrac34 n - 1 +2  \, A\Bigl(\dfrac{n}{2} \Bigr),
\end{eqnarray} while
the total number of multiplications
is 
\begin{eqnarray}\label{ricorsivamultbis}
M(n)=\dfrac14 n  +2 +2  \, M\Bigl(\dfrac{n}{2} \Bigr).
\end{eqnarray} Concerning the initial case, we have
\begin{eqnarray*}\label{ricorsivainitialbis}
A(4)=5, \quad M(4)=2.
\end{eqnarray*}

For every $n=2^r$, with $r \in \mathbb{N}$, $r \geq 2$, 
let $A(n)$ be as in (\ref{lawadd0bis}).
We get $A(4)=6-2+1=5$. Now we claim that 
the function $A(n)$ defined in (\ref{lawadd0bis}) satisfies
(\ref{ricorsivabis}). Indeed, we have
\begin{eqnarray*}\label{lawadd2bis}
A\Bigl(\dfrac{n}{2} \Bigr)= \dfrac38 n \,(\log_2 n -1)
-\dfrac14 n +1,
\end{eqnarray*}
and hence
\begin{eqnarray*}\label{ricorsivayesbis} 
\dfrac34 n - 1 +2  \, A\Bigl(\dfrac{n}{2} \Bigr)=
\dfrac34 n - 1+  \dfrac34 n \,(\log_2 n -1)
- \dfrac12 n +2=A(n), 
\end{eqnarray*}
getting the claim.

Moreover, for any $n=2^r$, with $r \in \mathbb{N}$, $r \geq 2$,  
let $M(n)$ be as in (\ref{lawmult0bis}).
One has $M(4)=2+ 2-2=2$. Now we claim that 
the function $M(n)$ defined in (\ref{lawmult0bis}) fulfils
(\ref{ricorsivamultbis}). It is
\begin{eqnarray*}\label{lawmult2bis}
M\Bigl(\dfrac{n}{2} \Bigr)= \dfrac18 n \,(\log_2 n -1)+ \dfrac14n-2,
\end{eqnarray*}
and hence
\begin{eqnarray*}\label{ricorsivayeesbis}
\dfrac14 n +2+2  \, M\Bigl(\dfrac{n}{2} \Bigr)=
\dfrac14 n +2 +\dfrac14 n \,(\log_2 n -1)+ \dfrac12 n-4=M(n),
\end{eqnarray*}
obtaining the claim.  
\end{proof}
\begin{lemma}\label{instrsin}
Let ${\mathbf{x}} \in {\mathbb{R}}^n$ be asymmetric. Then,
for $k=1, 2,\ldots, \nu-1$ it is
\begin{eqnarray}\label{sinkwelatnom} 
{\rm S}_k(\mathbf{x})= 
{\rm S}_k(\eta \, {\mathbf{x}}) + \dfrac{1}{2 \, \cos
\Bigl(\dfrac{\pi k }{m}\Bigr)}
\,{\rm S}_k(\alpha \, \zeta \, {\mathbf{x}}).
\end{eqnarray}
Moreover, if $t=m-k$, then
\begin{eqnarray}\label{sinkkt}
{\rm S}_t(\mathbf{x})=
- {\rm S}_k(\eta \, {\mathbf{x}}) + \dfrac{1}{2 \, \cos
\Bigl(\dfrac{\pi k }{m}\Bigr)} 
\,{\rm S}_k(\alpha \, \zeta \, {\mathbf{x}}).
\end{eqnarray}
\end{lemma}
\begin{proof}
We begin with (\ref{sinkwelatnom}).
As $\mathbf{x}$ is asymmetric, we have
\begin{eqnarray}\label{00}
{\rm C}_k(\mathbf{x})={\rm{C}}_k(\eta \, {\mathbf{x}})=0.
\end{eqnarray}
From (\ref{zkcos}) and (\ref{00}) we deduce 
\begin{eqnarray}\label{elatnom0}
{\rm C}_k(\zeta \, {\mathbf{x}}) = \dfrac{\sin
\Bigl(\dfrac{\pi k }{m}\Bigr)
}{\cos
\Bigl(\dfrac{\pi k }{m}\Bigr)} 
\, {\rm S}_k(\zeta \, {\mathbf{x}}).
\end{eqnarray}
From (\ref{zksin}), (\ref{sineodd}) and (\ref{elatnom0}) 
we get (\ref{sinkwelatnom}). 

The relation (\ref{sinkkt})
%Since $\mathbf{x}$ is asymmetric, then we get (\ref{elatnom0}). Thus the assertion 
follows from (\ref{zksin2}), (\ref{sineodd}) and (\ref{elatnom0}).  
\end{proof}
Now we define the following algorithm:
\begin{algorithmic}[1]
%\begin{algorithm}
%  \caption{An algorithm}
%  \begin{algorithmic}[1]
\STATE{{\bf{function SN$({\bf x},n)$}}}
\IF{$n=4$}
\STATE{$s_1=2 \, {x}_1$;}
\ELSE
%\STATE{$m=n/2$;}
%\STATE{$\nu=n/4$;}
\STATE{$\widetilde{\bf s}=$SN$( 
\eta \, {\bf x},n/2)$;} 
\STATE{$\overline{\bf s}=$SN$( 
\alpha \, \zeta \, {\bf x}, n/2)$;} 
\FOR{$k=1, 2, \ldots n/4-1$}
\STATE{$aux=1/(2 \cos (2 \, \pi \, k /n)) \, \overline{s}_k$;}
\STATE{$s_k= \widetilde{s}_k + aux$;}
\STATE{$s_{n/2-k}=aux  -\widetilde{s}_k$;}
\ENDFOR
\STATE{$s_{n/4}=0$;}
\FOR{$j=0,2,\ldots, n/4-2$}
\STATE{$s_{n/4}=s_{n/4}+{x}_{2j+1}- {x}_{2j+3}$}
\ENDFOR
\STATE{$s_{n/4}= 2 \, s_{n/4}$}
\ENDIF
\RETURN{${\bf s}$}
\end{algorithmic}

\begin{lemma}\label{preasymm}
For every asymmetric vector ${\mathbf{x}} \in{\mathbb{R}}^n$,
the vector $\eta \, {\mathbf{x}} \in {\mathbb{R}}^m$ is asymmetric.
\end{lemma}
\begin{proof}
Let $\overline{\mathbf{x}}=\eta \, {\mathbf{x}}$, where ${\mathbf{x}}$ 
is asymmetric. Then,
\begin{eqnarray}\label{asymmasymm}
\overline{x}_j={{x}}_{2j}=-{{x}}_{n-2j}=-{{x}}_{2(n/2-j)}=-\overline{x}_{n/2-j},
\quad j=0,1,\ldots m.
\end{eqnarray}
Since $\widetilde{\mathbf{x}} \in {\mathbb{R}}^m$, from 
(\ref{asymmasymm}) 
it follows that $\overline{\mathbf{x}}$ is asymmetric.  
\end{proof}
\begin{lemma}\label{presymmasymm}
Given any asymmetric vector $\mathbf{x} \in {\mathbb{R}}^n$, 
set $\widehat{\mathbf{x}}=\zeta \, {\mathbf{x}}$. Then,
\begin{eqnarray}\label{mcgplrcg}
{\rm S}_{\nu} (
\widehat{\mathbf{x}})=2 \sum_{j=0}^{\nu-1} (-1)^j \, 
\widehat{x}_{2j+1}.
\end{eqnarray}
\end{lemma}
\begin{proof}
Let ${\mathbf{x}}\in 
{\mathbb{R}}^n$ be any asymmetric vector. Then, 
$\widehat{\mathbf{x}}=\zeta \, {\mathbf{x}}
\in{\mathbb{R}}^m$. It is not difficult to check that
\begin{eqnarray}\label{barbar}
\widehat{x}_j=- \widehat{x}_{n-1-j}, 
\quad j=0,1,\ldots n-1.
\end{eqnarray}
Since ${\mathbf{u}}^{(\nu)}=( 1 \, \, -1 \, \, \ldots 1 \, \, -1)^T$
whenever ${\mathbf{u}}^{(\nu)}\in 
{\mathbb{R}}^m$, the equality in (\ref{mcgplrcg}) follows
from (\ref{zksinparticular})
and (\ref{barbar}).  
\end{proof}
\begin{theorem}  \label{tomove2}
Given an asymmetric vector $\mathbf{x} \in {\mathbb{R}}^n$,
let 
${\bf s}=$
{\rm SN}$({\mathbf{x}}, n)$. We get 
\begin{eqnarray*}\label{yjseno}
s_j={\rm S}_j(\mathbf{x})=\mathbf{x}^T {\mathbf{v}}^{(j)},
\quad j=1,2,\ldots, m-1.
\end{eqnarray*}\end{theorem}
\begin{proof}
First we prove that, when we call the function SN,
the first variable is asymmetric.
Concerning the external call, we have that $\mathbf{x}$ 
is asymmetric. 
Now we consider the internal calls 
of SN at lines 5
and 6.
By Lemma \ref{preasymm}, 
$\eta \, \mathbf{x}$ is asymmetric.
Therefore, line 5 of our algorithm fulfils the assertion.
Moreover, it is easy to see that line 6 satisfies the assertion too.

Now, let us suppose that $n=4$. We have
\begin{eqnarray*}
{\mathbf{v}}^{(1)} = (0 \, \,\, \, \, \,  1 \, \, \, \, \,  \, 0 \, \, -1)^T,
\end{eqnarray*} and hence
\begin{eqnarray*}
s_1= {\mathbf{x}}^T \, {\mathbf{v}}^{(1)} \, =
{{x}}_1 - {{x}}_3 = 2 {{x}}_1 ,
\end{eqnarray*}
because ${\mathbf{x}}$ is asymmetric.
Therefore, we obtain the formula at line 3 of 
the algorithm SN.

Now we consider the case $n >4$. As ${\mathbf{x}}$ is asymmetric,
from Lemma \ref{instrsin} we get that line 9
of the function SN gives the right value 
of ${\rm S}_k({\mathbf{x}})$
for $k=1$, $2, \ldots, \nu -1$. Since ${\mathbf{x}}$ is asymmetric,
from Lemma \ref{instrsin} we obtain that 
line 10 of the function SN gives the exact value 
of ${\rm S}_{m-k}({\mathbf{x}})$ for $k=1$, $2, \ldots, \nu -1$.
Thanks to Lemma \ref{presymmasymm} and asymmetry of
${\mathbf{x}}$, lines 12-16 give the exact value of 
${\rm{S}}_{\nu}({\mathbf{x}})$.
This ends the proof.  
\end{proof}
\begin{theorem}
Suppose to have a library in which the values
$1/(2 \cos (2 \, \pi \, k /n))$ have been stored 
for $n=2^r$, $r \in\mathbb{N}$,
$r \geq 2$, and $k \in \{ 0$, $1, \ldots, n-1\}$.
Then, the computational cost of a call of the function {\rm SN} 
is given by 
\begin{eqnarray}\label{lawadd0}
A(n)= n \, \log_2 n -\dfrac{11}{4} n +3,
\end{eqnarray}
\begin{eqnarray}\label{lawmult0}
M(n)= \dfrac14 n\, \log_2 n-\dfrac14 n ,
\end{eqnarray}
where $A(n)$ \rm (\em resp., $M(n)$\rm) \em denotes the number of
additions \rm(\em resp., multiplications\rm) \em required to compute {\rm CS}.
\end{theorem}
\begin{proof}
Concerning line 6, we first compute
$\alpha \, \zeta \, {\mathbf{x}}$. To do this, 
$\nu -1$ sums and no multiplications are required.
To compute the {\bf for} loop at lines 7-11, 
$2\, \nu - 2$ sums and $\nu -1 $ multiplications
are necessary. 
Concerning the {\bf for} loop at lines 13-15,
$\nu$ sums are necessary. Finally, at line 16,
one multiplication 
is necessary. Thus, the total number of 
additions is  
\begin{eqnarray}\label{ricorsivann}
A(n)=n - 3 +2  \, A\Bigl(\dfrac{n}{2} \Bigr),
\end{eqnarray}
and the total number of multiplications is 
\begin{eqnarray}\label{ricorsivamultnn}
M(n)=\dfrac14 n +2  \, M\Bigl(\dfrac{n}{2} \Bigr).
\end{eqnarray} Concerning the initial case, we have
\begin{eqnarray*}\label{ricorsivainitialnn}
A(4)=0, \quad M(4)=1.
\end{eqnarray*}
For each $n=2^r$, with $r \in \mathbb{N}$, $r \geq 2$, 
let $A(n)$ be as in (\ref{lawadd0}).
We get $A(4)=8-11+3=0$. 
Now we claim that 
the function $A(n)$ defined in (\ref{lawadd0}) satisfies
(\ref{ricorsivann}). Indeed, we have
\begin{eqnarray*}\label{lawadd2nn}
A\Bigl(\dfrac{n}{2} \Bigr)=-\dfrac{11}{8} n + \dfrac12 n \,(\log_2 n -1)+3.
\end{eqnarray*}
Therefore,
\begin{eqnarray*}\label{ricorsivayesnnh} 
n - 3 +2  \, A\Bigl(\dfrac{n}{2} \Bigr)=
n - 3  -\dfrac{11}{4} n + n \,(\log_2 n -1)+6=A(n),
\end{eqnarray*}
which gives the claim.

Moreover, for every $n=2^r$, with $r \in \mathbb{N}$, $r \geq 2$,  
let $M(n)$ be as in (\ref{lawmult0}).
One has $M(4)=2-1=1$. Now we claim that 
the function $M(n)$ defined in (\ref{lawmult0}) fulfils
(\ref{ricorsivamultnn}). Indeed, 
\begin{eqnarray*}\label{lawmult2}
M\Bigl(\dfrac{n}{2} \Bigr)=-\dfrac{1}{8} n + \dfrac18 n \,(\log_2 n -1),
\end{eqnarray*}
and hence
\begin{eqnarray*}\label{ricorsivayees}
\dfrac14 n +2  \, M\Bigl(\dfrac{n}{2} \Bigr)=
\dfrac14 n - \dfrac14 n+\dfrac14 n \,(\log_2 n -1)=M(n),
\end{eqnarray*}
getting the claim.  
\end{proof}

\begin{corollary}  
Let  $\mathbf{x} \in {\mathbb{R}}^n$,
${\mathbf{y}}=${\rm IDSCT}$({\mathbf{x}})$, 
${\bf c}=$
{\rm CS}$(\sigma \, {\mathbf{x}}, n)$ and
${\bf s}=$
{\rm SN}$(\alpha \, {\mathbf{x}}, n)$. Then,
\begin{eqnarray}\label{yj1}
y_j=\left\{ \begin{array}{ll}
\dfrac{\alpha_j \, c_j}{2}, & \text{   if  } j \leq m; \\ \\
\dfrac{\alpha_j \, s_{n-j}}{2}, & \text{   otherwise.  }
\end{array} \right.
\end{eqnarray}
\end{corollary}
\begin{proof}
Since $\sigma \, \mathbf{x}$ 
is symmetric, from Theorem \ref{tomove}
we obtain
\begin{eqnarray*}\label{yjcosinus}
c_j={\rm C}_j(\sigma \, \mathbf{x}),
\quad j=0,1,\ldots, m.
\end{eqnarray*}
Moreover, as $\alpha \, \mathbf{x}$ 
is asymmetric, from Theorem \ref{tomove2}
we get 
\begin{eqnarray*}\label{yjsinus}
s_j={\rm S}_j(\alpha \, \mathbf{x}),
\quad j=1,2,\ldots, m-1.
\end{eqnarray*}
Therefore, from (\ref{IDSCTbis}) we deduce (\ref{yj1}).  
\end{proof}

To complete the computation 
of IDSCT$(\mathbf{x})$ as in (\ref{IDSCTbis}), we have to compute 
$\sigma \, \mathbf{x}$ and $\alpha \, \mathbf{x}$ and to 
multiply every entry of the result of CS$(\sigma \, \mathbf{x},
\alpha \, \mathbf{x},n)$ by $\alpha_{k}/2$, $k=0,1,\ldots, n-1$.
The computational cost of these operations is $O(n)$. 
Therefore, the total cost for the computation of 
(\ref{2.2}) is $\frac74 \, n \, \log_2 n+o( n \, \log_2 n)$ additions and
$\frac12 \, n \, \log_2 n+o( n \, \log_2 n)$ multiplications.

\subsection{Computation of the eigenvalues of a
$\gamma$-matrix}
In this subsection
we find an algorithm to compute the expression in (\ref{2.3}).
We first determine the eigenvalues of the matrix $G$ in 
(\ref{primordiale}), in order to know
the matrix $\Lambda^{(G)}$ in (\ref{2.3}). 

Since $G \in {\mathcal G}_n$ and 
${\mathcal G}_n={\mathcal C}_n \oplus {\mathcal B}_n$, 
we determine two matrices $C \in {\mathcal C}_n$
and $B \in {\mathcal B}_n$, $C
=$circ$(\mathbf{c})$, $B
=$rcirc$(\mathbf{b})$.
%It is possible to prove that
%$(\mathbf{c})$ and 
%$(\mathbf{b})$
%can be computed in time $O(n)$ (see VIXRA)
% mcgplrcg
%\begin{comment}
Observe that
\begin{eqnarray}\label{firstdecomp}
g_{0,0}&=&c_0+b_0, \qquad
g_{\nu,\nu}=c_0+b_m, \\
g_{0,m}&=&c_m+b_m, \qquad
g_{\nu,m+\nu}=c_m+b_0. \nonumber
\end{eqnarray}
By solving the system in (\ref{firstdecomp}), we find 
$c_0$, $b_0$, 
$c_m$, $b_m$. 
Knowing $c_0$, from the principal diagonal of 
$G$ it is not difficult to determine the numbers 
$b_{2i}$, $i=1$, $2, \ldots, \nu-1$.
If we know these quantities, it is possible
to determine the numbers
${c}_{2i}$, $i=1$, $2, \ldots, \nu-1$,
from the first row of $G$.
Moreover, note that
\begin{eqnarray}\label{seconddecomp}
g_{0,1}&=&{c}_1+{b}_1, \qquad
g_{1,2}=c_1+{b}_3, \\
g_{0,3}&=&{c}_3+{b}_3, \qquad
g_{m-1,m+2}={c}_3+{b}_1. \nonumber
\end{eqnarray}
By solving the system in (\ref{seconddecomp}), we obtain
$c_1$, $b_1$, 
$c_3$, $b_3$. 
If we know $c_1$, then from the first diagonal 
superior to the principal diagonal of 
$G$ it is not difficult to determine the numbers 
${b}_{2i+1}$, $i=2$, $3, \ldots, \nu-1$.
Knowing these values, it is not
difficult to find the quantities
${c}_{2i+1}$, $i=2$, $3, \ldots, \nu-1$,
from the first row of $G$.
It is possible to prove that the computational cost
of all these operations is $O(n)$.
%\end{comment}
Note that \begin{eqnarray}\label{eigensum}
G \, \mathbf{q}^{(j)} &=& C
\, \mathbf{q}^{(j)} + B
\, \mathbf{q}^{(j)} =\left({\lambda}^{(C)}_j +
{\lambda}_j^{(B)} \right) \, \mathbf{q}^{(j)}, \quad 
j=0,1, \ldots, n-1,
\end{eqnarray}
where the ${\lambda}^{(C)}_j $'s and the
${\lambda}^{(B)}_j $'s are the $j$-th eigenvalues 
of $C$ and $B$, and the orders 
are given by Theorems \ref{Theorem3.1} and
\ref{Theorem3.126},
respectively.
%We recall that ${\widetilde{\lambda}_j}= {\mathbf{\widetilde{a}}}^T \,
%\mathbf{u}^{(j)}$, $j=0,1, \ldots, m$,
%${\widetilde{\lambda}_j}= {\widetilde{\lambda}_{n-j}}
%$, $j=m+1$, $m+2, \ldots, n-1$,
%${\widehat{\lambda}_j}= {\mathbf{\widehat{a}}}^T \,
%\mathbf{u}^{(j)}$, $j=1,2, \ldots, m-1$,
%${\widehat{\lambda}_j}= - {\widehat{\lambda}_{n-j}}
%$, $j=m+1$, $m+2, \ldots, n-1$. Furthermore, we get
%${\widehat{\lambda}_0}={\widehat{\lambda}_m}=0$.
\begin{proposition}  
Given two symmetric vectors
$\bf c$, $\bf b\in {\mathbb{R}}^n$
such that $$\displaystyle{\sum_{t=0}^{n-1} b_t = 0} \quad
\text{   and   } \quad
\displaystyle{\sum_{t=0}^{n-1} (-1)^t  b_t
\, = \, 0},$$
set ${\bf d}^{(C)}=$
{\rm CS}$({\bf c}, n)$ and
${\bf d}^{(B)}=$
{\rm CS}$({\bf b}, n)$.
Then the eigenvalues of 
$G={\rm circ}({\bf c})+{\rm rcirc}({\bf b})$ are given by
\begin{eqnarray}\label{yj}
\lambda^{(G)}_0&=&d_0^{(C)};  \qquad 
\lambda^{(G)}_j=d^{(C)}_j + d^{(B)}_j ,
\quad j=1,2,\ldots m-1;\\  
\lambda_m^{(G)}
&=&d_m^{(C)}; \qquad 
\lambda_j^{(G)}=d^{(C)}_{n-j} - d^{(B)}_{n-j},
\quad j=m+1,m+2,\ldots n-1. \nonumber
%\left\{ \begin{array}{ll}
%\dfrac{\alpha_j \, c_j}{2} & \text{if  } j \leq m, \\ \\
%\dfrac{\alpha_j \, s_j}{2} & \text{otherwise.  }
%\end{array} \right.
\end{eqnarray}
\end{proposition}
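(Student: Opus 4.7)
The plan is to identify $G=\text{circ}(\mathbf{c})+\text{rcirc}(\mathbf{b})$ as a $\gamma$-matrix, exploit the orthogonal decomposition ${\mathcal G}_n={\mathcal C}_n\oplus{\mathcal B}_n$ together with the eigenstructure formulas already established, and then translate cosine/sine inner products into outputs of the CS algorithm via Theorem \ref{tomove}.

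First I would observe that $\mathbf{c}$ symmetric implies $C=\text{circ}(\mathbf{c})$ is a real symmetric circulant matrix, hence $C\in{\mathcal H}_{n,n-1}={\mathcal C}_n$; similarly, the two summation conditions on $\mathbf{b}$ together with symmetry give $B=\text{rcirc}(\mathbf{b})\in{\mathcal J}_{n,1}={\mathcal B}_n$ by Theorem \ref{thm:A*n struc}. Therefore $G\in{\mathcal C}_n\oplus{\mathcal B}_n={\mathcal G}_n$, and formula (\ref{eigensum}) yields $\lambda^{(G)}_j=\lambda^{(C)}_j+\lambda^{(B)}_j$ for $j=0,1,\ldots,n-1$, where the eigenvalues are ordered according to the fixed eigenbasis $\{\mathbf{q}^{(j)}\}$ of $Q_n$.

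Next I would invoke Theorem \ref{Theorem3.1} to get $\lambda^{(C)}_j=\mathbf{c}^T\mathbf{u}^{(j)}$ for $j=0,1,\ldots,m$ together with $\lambda^{(C)}_j=\lambda^{(C)}_{n-j}$ for $j=1,\ldots,m-1$, and Theorem \ref{Theorem3.126} to get $\lambda^{(B)}_j=\mathbf{b}^T\mathbf{u}^{(j)}$ for $j=0,1,\ldots,m$, together with $\lambda^{(B)}_{n-j}=-\lambda^{(B)}_j$ for $j=1,\ldots,m-1$ and $\lambda^{(B)}_0=\lambda^{(B)}_m=0$. Since both $\mathbf{c}$ and $\mathbf{b}$ are symmetric, Theorem \ref{tomove} guarantees that the outputs of CS satisfy $d^{(C)}_j=\mathbf{c}^T\mathbf{u}^{(j)}=\lambda^{(C)}_j$ and $d^{(B)}_j=\mathbf{b}^T\mathbf{u}^{(j)}=\lambda^{(B)}_j$ for $j=0,1,\ldots,m$.

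Combining these identifications gives the four cases of (\ref{yj}): for $j=0$ one has $\lambda^{(G)}_0=d^{(C)}_0+\lambda^{(B)}_0=d^{(C)}_0$, because $\lambda^{(B)}_0=0$; for $1\le j\le m-1$ one directly has $\lambda^{(G)}_j=d^{(C)}_j+d^{(B)}_j$; for $j=m$ the vanishing $\lambda^{(B)}_m=0$ gives $\lambda^{(G)}_m=d^{(C)}_m$; and for $m+1\le j\le n-1$, writing $k=n-j\in\{1,\ldots,m-1\}$ and using the symmetric/asymmetric relations yields $\lambda^{(G)}_j=\lambda^{(C)}_{n-j}-\lambda^{(B)}_{n-j}=d^{(C)}_{n-j}-d^{(B)}_{n-j}$. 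There is no real obstacle here once the correct translations are made; the only bookkeeping care required is to match the index range $j\in\{m+1,\ldots,n-1\}$ of the CS output (which only produces $d_k$ for $k\le m$) via the reflection $k\mapsto n-k$ supplied by the symmetry/antisymmetry of the two spectra.
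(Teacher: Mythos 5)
Your proposal is correct and follows essentially the same route as the paper's proof: identify the CS outputs with the inner products $\mathbf{c}^T\mathbf{u}^{(j)}$, $\mathbf{b}^T\mathbf{u}^{(j)}$ via Theorem \ref{tomove}, identify those with the eigenvalues via Theorems \ref{Theorem3.1} and \ref{Theorem3.126}, and combine through (\ref{eigensum}) using the symmetry/asymmetry of the two spectra to cover the index range $j>m$. Your explicit verification that $B={\rm rcirc}(\mathbf{b})\in{\mathcal B}_n$ from the two vanishing-sum hypotheses is a detail the paper leaves implicit, but it does not change the argument.
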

\begin{proof} 
Since $\mathbf{c}$ and $\mathbf{b}$ 
are symmetric, from Theorem \ref{tomove}
we obtain
\begin{eqnarray*}
d^{(C)}_j={\rm C}_j({\mathbf{c}})=
{\mathbf{c}}^T  {\mathbf{u}}^{(j)},
\quad
d^{(B)}_j={\rm C}_j({\mathbf{b}})=
{\mathbf{c}}^T  {\mathbf{u}}^{(j)},
\quad j=0,1,\ldots, m.
\end{eqnarray*}
By Theorems \ref{Theorem3.1} and
\ref{Theorem3.126}, we get that $ \lambda^{(C)}_j=
{{d}}^{(C)}_j$ and $ \lambda^{(B)}_j=
d^{(B)}_j$
for $j=0$, $1, \ldots, m$, where ${{\lambda}}^{(C)}_j$
(resp., ${{\lambda}}^{(B)}_j$) are the eigenvalues of
${\rm circ}(\mathbf{c})$ 
(resp., ${\rm rcirc}(\mathbf{b})$).
From Theorems \ref{Theorem3.1},
\ref{Theorem3.126} and (\ref{eigensum}) we obtain
(\ref{yj}). 
%This ends the proof. 
 
\end{proof}

To complete the computation of (\ref{2.3}), we have to multiply the diagonal matrix
$\Lambda^{(G)}$ by $\mathbf{z}$. The cost of this operation consists of
$n$ multiplications. Thus, by Theorem \ref{tomove3},
the total cost to compute
(\ref{2.3}) is of $\frac32 \, n \,\log_2 n +o(n \,\log_2 n)$ additions and
$\frac12 \, n \,\log_2 n +o(n \,\log_2 n)$ multiplications.
\subsection{The DSCT technique}
Now we compute $\mathbf{y}=Q_n \, \mathbf{t}=
{\rm DSCT}(\mathbf{t})$. Let 
%\begin{eqnarray}\label{scala}
$\overline{\alpha}$ and 
%=\dfrac{1}{\sqrt{n}}, \qquad
$\widetilde{\alpha}$
%=\sqrt{\dfrac{2}{n}}.
%\end{eqnarray} 
be as in (\ref{alphak}).
By (\ref{prestartingmatrix}), 
for $j=0$, $1, \ldots, m$ it is
\begin{eqnarray}\label{i} \nonumber
y_j&=& \overline{\alpha} \, t_0 +(-1)^j \,\overline{\alpha} \,
t_{n/2}+\widetilde{\alpha} \,\sum_{k=1}^{m-1} 
\cos \Bigl( \dfrac{2 \, k \, \pi \, j}{n}\Bigl) \, t_k +\\ &+&
\widetilde{\alpha} \,\sum_{k=1}^{m-1} 
\sin \Bigl( \dfrac{2 \, k \, \pi \, j}{n}\Bigl) \, t_{n-k}=
{\rm DSCT}_j(\mathbf{t}),
\end{eqnarray}
and for $j=1$, $2, \ldots, m-1$ we have
\begin{eqnarray}\label{n-i} \nonumber
y_{n-j}&=& \overline{\alpha} \, t_0 +(-1)^j \,\overline{\alpha} \,
t_{n/2}+\widetilde{\alpha} \,\sum_{k=1}^{m-1} 
\cos \Bigl( \dfrac{2 \, k \, \pi \,(n-j)}{n}\Bigl) \, t_k +\\ &+&
\widetilde{\alpha} \,\sum_{k=1}^{m-1} 
\sin \Bigl( \dfrac{2 \, k \, \pi \, (n-j)}{n}\Bigl) \, t_{n-k}=\\ &=&
\nonumber \overline{\alpha} \, t_0 +(-1)^j \,\overline{\alpha} \,
t_{n/2}+\widetilde{\alpha} \,\sum_{k=1}^{m-1} 
\cos \Bigl( \dfrac{2 \, k \, \pi \, j}{n}\Bigl) \, t_k -\\ &-&
\widetilde{\alpha} \,\sum_{k=1}^{m-1} 
\sin \Bigl( \dfrac{2 \, k \, \pi \, j}{n}\Bigl) \, t_{n-k}=
{\rm DSCT}_{n-j}(t). \nonumber
\end{eqnarray}
Now we define the functions $\varphi$, $\vartheta 
: \mathbb{R}^n \to \mathbb{R}^n$ by
\begin{eqnarray}\label{t1}
\varphi({\mathbf{t}})   &=& \overline{\mathbf{t}}
=(t_0 \, \,  t_1  \, \, \ldots t_{m-1} \, \, t_m \, \, 
 t_{m-1} \, \, \ldots \, \, t_1 ),    	\\
  \vartheta({\mathbf{t}})  &=&  \widetilde{\mathbf{t}}
=(0 \, \, t_{n-1}\, \, t_{n-2} \, \, \ldots t_{m+1} \, \,0
\, \, -t_{m+1} \, \, \ldots \, \, - t_{n-1} ). \nonumber
\end{eqnarray}
The following result holds.
\begin{theorem}\label{3.24}
For $j=1$, $2, \ldots, m-1$, it is
\begin{eqnarray}\label{j}
{\rm DSCT}_j(\mathbf{t})
&=& \dfrac{\widetilde{\alpha}}{2} ({\rm C}_j 
(\overline{\mathbf{t}})-t_0 - (-1)^j t_m +  
{\rm S}_j(\widetilde{\mathbf{t}}) )+ \\ &+& \overline{\alpha} \, 
t_0 +(-1)^j \,\overline{\alpha} \,
t_m \nonumber
\end{eqnarray}
%From (\ref{n-i}), (\ref{t1}),
%(\ref{coscos}) and (\ref{sinsin}) we get, 
%Moreover, for $j=1$, $2, \ldots, m-1$ we have
and
\begin{eqnarray}\label{nminusj}
{\rm DSCT}_{n-j}(\mathbf{t})&=& \dfrac{\overline{\alpha}}{2} ({\rm C}_j 
(\overline{\mathbf{t}})-t_0 - (-1)^j t_m -  
{\rm S}_j(\widetilde{\mathbf{t}}) )+ \\ &+& \overline{\alpha} \, 
t_0 +(-1)^j \,\overline{\alpha} \,
t_m. \nonumber
\end{eqnarray}
%From (\ref{n-i}), (\ref{coscos}) and (\ref{sinsin}) we obtain, 
%for $i=1$, $2, \ldots, m$,
Moreover, one has
\begin{eqnarray}\label{0}
{{\rm DSCT}_0}(\mathbf{t})&=& \dfrac{\widetilde{\alpha}}{2} ({\rm C}_0 
(\overline{\mathbf{t}})-t_0 - t_m )+\overline{\alpha} \, 
t_0 +\overline{\alpha} \,
t_m; 
\end{eqnarray}
\begin{eqnarray}\label{m}
{{\rm DSCT}_m}
(\mathbf{t})&=& \dfrac{\widetilde{\alpha}}{2} ({\rm C}_m
(\overline{\mathbf{t}})-t_0 - t_m) +\overline{\alpha} \, 
t_0 +\overline{\alpha} \,
t_m.
\end{eqnarray}
\end{theorem}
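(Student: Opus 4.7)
The plan is to reduce each formula in the statement to the defining expressions (\ref{i}) and (\ref{n-i}), using the symmetry of $\overline{\mathbf{t}}=\varphi(\mathbf{t})$ and the asymmetry of $\widetilde{\mathbf{t}}=\vartheta(\mathbf{t})$, exactly as prescribed by (\ref{t1}). The essential observation is that $\text{C}_j(\overline{\mathbf{t}})$ collects twice the cosine partial sum $\sum_{k=1}^{m-1}t_k\cos(2\pi jk/n)$ appearing in $\text{DSCT}_j(\mathbf{t})$, plus boundary terms, and $\text{S}_j(\widetilde{\mathbf{t}})$ collects twice the sine partial sum $\sum_{k=1}^{m-1}t_{n-k}\sin(2\pi jk/n)$, modulo a sign that will distinguish index $j$ from index $n-j$.

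First, I expand $\text{C}_j(\overline{\mathbf{t}})=\sum_{l=0}^{n-1}\overline{t}_l\cos(2\pi jl/n)$, splitting into $l=0$, $1\le l\le m-1$, $l=m$, and $m+1\le l\le n-1$. Using $\overline{t}_l=t_l$ for $0\le l\le m$ and $\overline{t}_{n-k}=t_k$ for $1\le k\le m-1$, together with $\cos(2\pi j(n-k)/n)=\cos(2\pi jk/n)$, I obtain
\[
\text{C}_j(\overline{\mathbf{t}})\,=\,t_0+(-1)^j t_m+2\sum_{k=1}^{m-1}t_k\cos\!\Bigl(\tfrac{2\pi jk}{n}\Bigr).
\]
Analogously, using $\widetilde{t}_0=\widetilde{t}_m=0$, $\widetilde{t}_l=t_{n-l}$ for $1\le l\le m-1$, $\widetilde{t}_l=-t_l$ for $m+1\le l\le n-1$, and the identity $\sin(2\pi j(n-k)/n)=-\sin(2\pi jk/n)$, a change of index $k\mapsto n-k$ in the tail gives
\[
\text{S}_j(\widetilde{\mathbf{t}})\,=\,2\sum_{k=1}^{m-1}t_{n-k}\sin\!\Bigl(\tfrac{2\pi jk}{n}\Bigr).
\]

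Solving these two identities for the cosine partial sum and the sine partial sum, and substituting into (\ref{i}), I obtain (\ref{j}); substituting instead into (\ref{n-i}), where only the sine contribution flips sign, I obtain (\ref{nminusj}). The two boundary cases follow by direct evaluation: for $j=0$ the sine partial sum vanishes, and $\text{C}_0(\overline{\mathbf{t}})=t_0+t_m+2\sum_{k=1}^{m-1}t_k$, which yields (\ref{0}); for $j=m$ the sine partial sum still vanishes since $\sin(\pi k)=0$, while $\text{C}_m(\overline{\mathbf{t}})=t_0+t_m+2\sum_{k=1}^{m-1}(-1)^k t_k$ (using $(-1)^{n-k}=(-1)^k$, as $n$ is even), and this yields (\ref{m}).

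I expect no deep obstacle: the whole argument is a matter of bookkeeping on the reflection symmetry/antisymmetry of $\overline{\mathbf{t}}$ and $\widetilde{\mathbf{t}}$ and on the parity of the cosine and sine kernels under $k\mapsto n-k$. The most delicate point is tracking the correct coefficients $\overline{\alpha}$ and $\widetilde{\alpha}$ at the boundary indices $0$ and $m$, since those columns of $Q_n$ carry the normalizing factor $\overline{\alpha}=1/\sqrt{n}$ rather than $\widetilde{\alpha}=\sqrt{2/n}$; one has to keep those two terms outside the factor of $\widetilde{\alpha}/2$ that multiplies the sanitised cosine/sine sums, which is exactly the shape of the right-hand sides in (\ref{j})--(\ref{m}).
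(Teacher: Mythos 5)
Your proposal is correct and follows essentially the same route as the paper: the paper's proof likewise expands ${\rm C}_j(\overline{\mathbf{t}})$ and ${\rm S}_j(\widetilde{\mathbf{t}})$ via the reflection symmetry of $\overline{\mathbf{t}}$ and antisymmetry of $\widetilde{\mathbf{t}}$ together with the parity of cosine and sine under $k\mapsto n-k$, obtaining exactly your two identities, and then substitutes them into (\ref{i}) and (\ref{n-i}). Your computation in fact shows that the factor $\overline{\alpha}/2$ printed in (\ref{nminusj}) should read $\widetilde{\alpha}/2$, consistent with (\ref{j}).
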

\begin{proof}
We have
\begin{eqnarray} \label{coscos} \nonumber
 {\rm C}_j 
(\overline{\mathbf{t}})&=&{\overline{\mathbf{t}}}^T
{\mathbf{u}}^{(j)}=\overline{t}_0+(-1)^j \overline{t}_m+
\sum_{k=1}^{m-1} 
\cos \Bigl( \dfrac{2 \, k \, \pi \, j}{n}\Bigl) \, \overline{t}_k +
\sum_{k=m+1}^{n-1} 
\cos \Bigl( \dfrac{2 \, k \, \pi \, j}{n}\Bigl) \, \overline{t}_k =\\ 
&=& \nonumber  t_0+(-1)^j \, t_m+
\sum_{k=1}^{m-1} 
\cos \Bigl( \dfrac{2 \, k \, \pi \, j}{n}\Bigl) \, t_k +
\sum_{k=1}^{m-1} 
\cos \Bigl( \dfrac{2 \, (n-k) \, \pi \, j}{n}\Bigl) \, 
\overline{t}_{n-k} = \\&=&  t_0+(-1)^j t_m+
\sum_{k=1}^{m-1} 
\cos \Bigl( \dfrac{2 \, k \, \pi \, j}{n}\Bigl) \, t_k +
\sum_{k=1}^{m-1} 
\cos \Bigl( \dfrac{2 \, (n-k) \, \pi \, j}{n}\Bigl) \, t_k =
\\&=&  t_0+(-1)^j t_m+2
\sum_{k=1}^{m-1} 
\cos \Bigl( \dfrac{2 \, k \, \pi \, j}{n}\Bigl) \, t_k ; \nonumber
\end{eqnarray}
\begin{eqnarray} \label{sinsin}
\nonumber  {\rm S}_j 
(\widetilde{\mathbf{t}}) &=&{\widetilde{\mathbf{t}}}^T
{\mathbf{v}}^{(j)}=
\sum_{k=1}^{m-1} 
\sin \Bigl( \dfrac{2 \, k \, \pi \, j}{n}\Bigl) \, \widetilde{t}_k +
\sum_{k=m+1}^{n-1} 
\sin \Bigl( \dfrac{2 \, k\, \pi \, j}{n}\Bigl) \, \widetilde{t}_k 
=\\ 
&=& 
\sum_{k=1}^{m-1} 
\sin \Bigl( \dfrac{2 \, k \, \pi \, j}{n}\Bigl) \, t_{n-k} -
\sum_{k=1}^{m-1} 
\sin \Bigl( \dfrac{2 \, (n-k) \, \pi \, j}{n}\Bigl) \, 
t_{n-k} = \\&=& \nonumber 2 
\sum_{k=1}^{m-1} 
\sin \Bigl( \dfrac{2 \, k \, \pi \, j}{n}\Bigl) \, t_{n-k}.
\end{eqnarray}
From (\ref{i}) and (\ref{coscos}) we obtain (\ref{j}), 
(\ref{0}) and (\ref{m}), while from (\ref{n-i}) and (\ref{sinsin}) we 
get %(\ref{j}) and (\ref{sinsin}) we get 
(\ref{nminusj}).  
\end{proof}
The computational cost of the valuation of the functions
 $\varphi$ and $\vartheta$ is
linear, and the cost of the call of the functions 
CS and SN to determinate the values C$_j(\overline{\mathbf{t}})$
for $j=0$, $1, \cdots, m$ and S$_j(\widetilde{\mathbf{t}})$
for $j=1$, $2, \cdots, m$ is of 
$\frac74 \, n \,\log_2 n +o(n \,\log_2 n)$ additions and
$\frac12 \, n \,\log_2 n +o(n \,\log_2 n)$ multiplications.
The remaining computations for DSCT$(\mathbf{t})$
are of $O(n)$. Thus, the total cost to compute 
DSCT$(\mathbf{t})$ is of 
$\frac74 \, n \,\log_2 n +o(n \,\log_2 n)$ additions and
$\frac12 \, n \,\log_2 n +o(n \,\log_2 n)$ multiplications.
Therefore, the total cost to compute (\ref{primordiale}) is given by 
$5\, n \,\log_2 n +o(n \,\log_2 n)$ additions and
$\frac32 \, n \,\log_2 n +o(n \,\log_2 n)$ multiplications.
\section{Multiplication between two $\gamma$-matrices}
In this section we show how to compute the product of 
two given $\gamma$-matrices. 
\begin{lemma}
Let $G_1$, $G_2$ be two $\gamma$-matrices. Then,
the first column of 
$\Lambda^{(G_1)}\, \Lambda^{(G_2)} Q_n^T$
is given by 

\begin{eqnarray}\label{l}
\mathbf{s}^{(G_1,G_2)}=
\begin{pmatrix} 
\overline{\alpha} \,\lambda_0^{(G_1)} \, 
\lambda_0^{(G_2)}\\ \\
\widetilde{\alpha} \,\lambda_1^{(G_1)} \, 
\lambda_1^{(G_2)}\\ \vdots \\
\widetilde{\alpha} \,\lambda_{{n/2}-1}^{(G_1)} \, 
\lambda_{{n/2}-1}^{(G_2)}\\ 
\\
\overline{\alpha} \,\lambda_{n/2}^{(G_1)} \, 
\lambda_{n/2}^{(G_2)}\\ \\ 0 \\ \vdots \\0
\end{pmatrix},
\end{eqnarray}
where $\overline{\alpha}$ and 
$\widetilde{\alpha}$ are as in \rm (\ref{alphak}) \em
and $\lambda_j^{(G_1)}$ (resp., $\lambda_j^{(G_2)}$),
$j=0$, $1, \ldots, n/2$, are the first $n/2+1$
eigenvalues of $G_1$
(resp., $G_2$).
Moreover, 
$\mathbf{s}^{(G_1,G_2)}$ can be computed by 
%means of
$\frac32 \, n \log_2 n+
o( n \log_2 n)$ additions and
$\frac12 \, n \log_2 n +
o( n \log_2 n)$
multiplications.
\end{lemma}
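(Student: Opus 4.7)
The plan is to split the statement into two parts: first verify the explicit formula for $\mathbf{s}^{(G_1,G_2)}$ by direct matrix multiplication, and then design an efficient procedure for obtaining the data needed to assemble it, namely the first $n/2+1$ eigenvalues of each $G_r$.

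For the formula, I would observe that the first column of $Q_n^T$ is exactly the first row of $Q_n$, whose $j$-th entry is $q^{(n)}_{0,j}$. Setting $k=0$ in (\ref{prestartingmatrix}) gives $q^{(n)}_{0,j}=\alpha_j\cos 0=\alpha_j$ for $0\le j\le n/2$ and $q^{(n)}_{0,j}=\alpha_j\sin 0=0$ for $n/2<j\le n-1$. Since $\Lambda^{(G_1)}\Lambda^{(G_2)}$ is diagonal with $(j,j)$-entry $\lambda_j^{(G_1)}\lambda_j^{(G_2)}$, the first column of $\Lambda^{(G_1)}\Lambda^{(G_2)}Q_n^T$ has $j$-th entry $\lambda_j^{(G_1)}\lambda_j^{(G_2)}\,q^{(n)}_{0,j}$. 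Substituting the values of $\alpha_j$ from (\ref{alphak}) (so $\alpha_0=\alpha_{n/2}=\overline{\alpha}$ and $\alpha_j=\widetilde{\alpha}$ for $1\le j\le n/2-1$) matches (\ref{l}) term by term.

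For the computational cost, the key observation is that only the first $n/2+1$ eigenvalues of each $G_r$ are required. I would use the decomposition $G_r=\operatorname{circ}(\mathbf{c}^{(r)})+\operatorname{rcirc}(\mathbf{b}^{(r)})$ provided by the direct-sum theorem, where both $\mathbf{c}^{(r)}$ and $\mathbf{b}^{(r)}$ are symmetric vectors. Then the first row of $G_r$ is $\mathbf{g}^{(r)}=\mathbf{c}^{(r)}+\mathbf{b}^{(r)}$, which is itself symmetric. By the eigenvalue formula of the previous proposition, for $j=0,1,\ldots,n/2$,
\[
\lambda_j^{(G_r)}=\mathbf{c}^{(r)T}\mathbf{u}^{(j)}+\mathbf{b}^{(r)T}\mathbf{u}^{(j)}=\mathbf{g}^{(r)T}\mathbf{u}^{(j)}=\textrm{C}_j(\mathbf{g}^{(r)}),
\]
so that a single call $\mathrm{CS}(\mathbf{g}^{(r)},n)$ produces all the needed eigenvalues. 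By Theorem \ref{tomove3}, this costs $\frac{3}{4}n\log_2 n+O(n)$ additions and $\frac{1}{4}n\log_2 n+O(n)$ multiplications per matrix.

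Performing one CS call on each of $G_1$ and $G_2$, then executing the $O(n)$ componentwise multiplications $\alpha_j\lambda_j^{(G_1)}\lambda_j^{(G_2)}$ and the zero-padding needed to assemble (\ref{l}), gives a total of $\frac{3}{2}n\log_2 n+o(n\log_2 n)$ additions and $\frac{1}{2}n\log_2 n+o(n\log_2 n)$ multiplications, as claimed. The only subtlety I expect is the shortcut that avoids separating $\mathbf{c}^{(r)}$ from $\mathbf{b}^{(r)}$: a naive application of the eigenvalue proposition would use two CS calls per matrix and double the leading constants; exploiting linearity of $\textrm{C}_j$ together with the fact that we never need the eigenvalues of the circulant and reverse-circulant parts individually reduces the cost to a single CS call per matrix, which is what allows the stated bound to be met.
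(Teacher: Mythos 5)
Your proposal is correct and follows essentially the same route as the paper: read off the first column of $Q_n^T$ as the $k=0$ row of $Q_n$ (you correctly use $q^{(n)}_{0,j}=\alpha_j$ for $j\le n/2$ and $0$ otherwise; the paper's own proof writes $q^{(n)}_{1,j}$, which appears to be a typo), and compute the needed eigenvalues via $\lambda_j^{(G_r)}=\mathrm{C}_j(\mathbf{g}^{(r)})$ applied to the symmetric first row of each $G_r$, so that a single call of CS per matrix suffices. The shortcut you highlight — not separating the circulant and reverse-circulant parts, which halves the number of CS calls — is exactly what the paper exploits to reach the stated $\frac32 n\log_2 n$ and $\frac12 n\log_2 n$ leading terms.
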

\begin{proof}
First, we note that 
\begin{eqnarray}\label{prestartingmatrix0}
q^{(n)}_{1,j}=\left\{ \begin{array}{ll}
\overline{\alpha}, & \text{if  } j=0 \text{  or  }
j= n/2 ; \\  \\ \widetilde{\alpha},
 & \text{if  } j=1,2, \ldots, n/2 - 1; \\ \\ 0, & \text{otherwise.}
\end{array} \right.
\end{eqnarray} 
Thus, the first part of the 
assertion follows from (\ref{prestartingmatrix0}).

Moreover, we get
\begin{eqnarray*}
\lambda_j^{(G_1)}&=& {\mathbf{g_1}}^T \,
 \mathbf{u}^{(j)}={\rm C}_j(\mathbf{g_1}), 
\quad j =0,1, \ldots, m; \\ 
\lambda_j^{(G_2)}&=& {\mathbf{g_2}}^T\,
 \mathbf{u}^{(j)}={\rm C}_j(\mathbf{g_2}), 
\quad j =0,1, \ldots, m,
\end{eqnarray*} 
where $\mathbf{g_1}$ and $\mathbf{g_2}$ are the first row
of $G_1$ and $G_2$, respectively.

In order to compute the eigenvalues of $G_1$ and 
$G_2$, we do two calls of the function CS, obtaining
a computational cost of 
$\dfrac32 \, n \log_2 n+
o( n \log_2 n)$ additions of
$\dfrac12 \, n \log_2 n +
o( n \log_2 n)$
multiplications.
Note that the final multiplication cost
is not relevant, because it is of type 
$O(n)$.  
\end{proof}
\begin{theorem}
Given two $\gamma$-matrices $G_1$, $G_2$, where 
$G_1=C_1+B_1$, $G_2=C_2+B_2$, 
$C_l\in {\mathcal C}_n$, $B_l\in {\mathcal B}_n$, $l=1.2$, we have
%$\widehat{A}$, $\widehat{B} \in {\mathcal B}_n$, we have
\begin{eqnarray}\label{4760}
G_1\,  G_2 = {\rm{circ}}\left(Q_n \left(\mathbf{s}^{(C_1 C_2)}
+\mathbf{s}^{(B_1 B_2)}\right)\right) +
{\rm{rcirc}}\left(Q_n \left(\mathbf{s}^{(C_1 B_2)}
+\mathbf{s}^{(B_1 C_2)}\right)  \right).
\end{eqnarray}
Moreover, with the exception of the computation of 
the functions {\rm{circ}} and {\rm{rcirc}},
$G_1 \, G_2$ can be computed by means of
$\frac92 \, n \log_2 n+
o( n \log_2 n)$ additions of
$\frac32 \, n \log_2 n +
o( n \log_2 n)$
multiplications.
\end{theorem}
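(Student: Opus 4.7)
The plan is to prove the structural identity (\ref{4760}) first, and then track the operation count. My starting point would be the direct sum decomposition ${\mathcal G}_n = {\mathcal C}_n \oplus {\mathcal B}_n$ given by Theorem \ref{directsum}, which writes $G_l = C_l + B_l$ with $C_l \in {\mathcal C}_n$ and $B_l \in {\mathcal B}_n$ for $l = 1, 2$. Expanding bilinearly, $G_1 G_2 = C_1 C_2 + C_1 B_2 + B_1 C_2 + B_1 B_2$. Proposition \ref{productm} then classifies the summands as $C_1 C_2, B_1 B_2 \in {\mathcal C}_n$ and $C_1 B_2, B_1 C_2 \in {\mathcal B}_n$, so after regrouping, the first bracket $C_1 C_2 + B_1 B_2$ lies in ${\mathcal C}_n$ and the second $C_1 B_2 + B_1 C_2$ lies in ${\mathcal B}_n$.

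To turn each bracket into the circulant or reverse circulant expression, I would apply the lemma just preceding this theorem: the first column of $Q_n \Lambda^{(X_1)} \Lambda^{(X_2)} Q_n^T$ equals $Q_n \mathbf{s}^{(X_1, X_2)}$. Linearity of the first-column map then yields $Q_n(\mathbf{s}^{(C_1 C_2)} + \mathbf{s}^{(B_1 B_2)})$ and $Q_n(\mathbf{s}^{(C_1 B_2)} + \mathbf{s}^{(B_1 C_2)})$ as the first columns of the two brackets. Using (\ref{HS}) together with the inclusion ${\mathcal B}_n \subset {\mathcal L}_{n, 1}$ from Theorem \ref{213}, any element of ${\mathcal C}_n$ equals $\mathrm{circ}$ of its first column, and any element of ${\mathcal B}_n$ equals $\mathrm{rcirc}$ of its first column (in both cases the first column matches the first row, thanks to the symmetry of the generating vector, which is Theorem \ref{thm:A*n struc} for ${\mathcal B}_n$). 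Combining the two reconstructions gives (\ref{4760}).

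For the operation count, the plan is to show that the entire computation (apart from the bookkeeping of $\mathrm{circ}$ and $\mathrm{rcirc}$) reduces to exactly six calls of $\mathrm{CS}$, each costing $\tfrac{3}{4}\, n \log_2 n + O(n)$ additions and $\tfrac{1}{4}\, n \log_2 n + O(n)$ multiplications by Theorem \ref{tomove3}. First, I would decompose each $G_l$ as $C_l + B_l$ in $O(n)$ operations via the scheme outlined around (\ref{firstdecomp})--(\ref{seconddecomp}); next, by Theorems \ref{Theorem3.1} and \ref{Theorem3.126}, the eigenvalues $\lambda^{(C_l)}_j$ and $\lambda^{(B_l)}_j$ for $j = 0, 1, \ldots, n/2$ are recovered from the symmetric first rows $\mathbf{c}_l, \mathbf{b}_l$ by four $\mathrm{CS}$ calls; finally the two vectors $\mathbf{s}^{(C_1 C_2)} + \mathbf{s}^{(B_1 B_2)}$ and $\mathbf{s}^{(C_1 B_2)} + \mathbf{s}^{(B_1 C_2)}$ are assembled in $O(n)$ scalar multiplications and additions.

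The step I expect to carry the main technical weight is showing that each of the two multiplications by $Q_n$ costs a single $\mathrm{CS}$ call rather than a full $\mathrm{DSCT}$. Both input vectors vanish at positions $n/2 + 1, \ldots, n - 1$, so the relevant sum collapses to
\begin{equation*}
(Q_n \mathbf{w})_k = \overline{\alpha}\, w_0 + (-1)^k \overline{\alpha}\, w_{n/2} + \widetilde{\alpha} \sum_{j=1}^{n/2 - 1} \cos\Bigl(\dfrac{2 \pi k j}{n}\Bigr) w_j,
\end{equation*}
which is symmetric under $k \leftrightarrow n - k$ and therefore only needs to be evaluated for $k = 0, 1, \ldots, n/2$; this matches the symmetry of the first column of a symmetric circulant matrix and of an element of ${\mathcal B}_n$ regarded as a reverse circulant (Theorem \ref{thm:A*n struc}). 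An elementary identification then shows that these $n/2 + 1$ values coincide, up to $O(n)$ boundary corrections, with $\mathrm{CS}$ applied to the symmetric extension of $\mathbf{w}$, so each $Q_n$-application costs exactly one $\mathrm{CS}$ call. Summing the four $\mathrm{CS}$ calls for the eigenvalues with the two $\mathrm{CS}$ calls for the $Q_n$-applications gives $\tfrac{9}{2}\, n \log_2 n + o(n \log_2 n)$ additions and $\tfrac{3}{2}\, n \log_2 n + o(n \log_2 n)$ multiplications, which is the claimed bound.
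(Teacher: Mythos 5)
Your proposal is correct and follows essentially the same route as the paper: bilinear expansion $G_1G_2=C_1C_2+C_1B_2+B_1C_2+B_1B_2$, classification of the summands via Proposition \ref{productm}, reconstruction of each bracket from its first column via the preceding lemma, and a count of six CS calls (four for the eigenvalues of $C_l,B_l$ and two for the $Q_n$-applications, which reduce to CS because the vectors $\mathbf{s}$ vanish beyond index $n/2$, i.e.\ $\widetilde{\mathbf{t}}=0$ in (\ref{t1})). Your justification of why each $Q_n$-application costs only one CS call is in fact spelled out in slightly more detail than in the paper, but it is the same observation.
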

\begin{proof}
First, we note that 
\begin{eqnarray*}\label{distributive0}
G_1\, G_2= C_1 \, C_2 + C_1 \, B_2 +
B_1 \, C_2 + B_1 \, B_2. 
\end{eqnarray*}
From this it follows that the first column of $G_1\, G_2$ is equal to 
\begin{eqnarray}\label{QnQn}
Q_n \left(\mathbf{s}^{(C_1 C_2)} + 
\mathbf{s}^{(B_1 B_2)} \right) +
Q_n \left(\mathbf{s}^{(C_1 B_2)} + 
\mathbf{s}^{(B_1 C_2)} \right).
\end{eqnarray}
Moreover, observe that every $\gamma$-matrix 
has the property that its first column coincide
with the transpose of its first row. From this and 
(\ref{productm}) we obtain (\ref{4760}).

Furthermore note that, when we compute the two DSCT's in 
(\ref{QnQn}), from (\ref{l}) it follows that the vector
$\widetilde{\mathbf{t}}$ in $(\ref{t1})$ is equal to $0$ 
in both cases. Therefore, two calls of the function CS are needed.

Again, we observe that to compute
$\mathbf{s}^{(C_1 C_2)}$, $
\mathbf{s}^{(B_1 B_2)}$, 
$Q_n (\mathbf{s}^{(C_1 B_2)})$ and 
$\mathbf{s}^{(B_1 C_2)}$ it is enough
to compute the eigenvalues of $C_l$, $B_l$, $l=1,2$, only once.
Thus, four calls of the function CS are needed.
Hence, $G_1\, G_2$ can be computed with
$\frac92 \, n \log_2 n+
o( n \log_2 n)$ additions of
$\frac32 \, n \log_2 n +
o( n \log_2 n)$
multiplications.

\end{proof} 
\section{Toeplitz matrix preconditioning}\label{TT}
For each $n \in \mathbb{N}$, let us consider the following class:
\begin{eqnarray}\label{toeplitzs}
{\mathcal T}_n =\{T_n \in \mathbb{R}^{n \times n}:
T_n=(t_{k,j})_{k,j}, t_{k,j}=t_{|k-j|}, \, \, k,j \in \{0, 1, \ldots, n-1\}
\, \, \}. 
\end{eqnarray}
Observe that the class defined in (\ref{toeplitzs})
coincides with the family of all real symmetric Toeplitz matrices.
\vspace{3mm}

Now we consider the following problem.
\begin{description}
\item[] Given $T_n \in {\mathcal T}_n$, find
\begin{eqnarray*}\label{minmin}
G_n(T_n)=\min_{G \in {\mathcal G}_n} \|G - T_n \|_F,
\end{eqnarray*}
where $G_n(T_n) = C_n(T_n) + B_n(T_n)$,
$C_n(T_n) \in {\mathcal C}_n$,
$B_n(T_n) \in {\mathcal B}_n$, and $\|\cdot\|_F$ denotes 
the Frobenius norm.
\end{description}
\begin{theorem}\label{calculus} 
Let $\widehat{\mathcal G}_n= \mathcal{S}_n + 
{\mathcal{H}}_{n,1}$.
Given $T_n \in {\mathcal T}_n$, one has
\begin{eqnarray*}
C_n (T_n)+ B_n (T_n)= \min_{G\in \widehat{\mathcal G}_n } \|G- T_n \|_F=
\min_{G \in {\mathcal G}_n} \|G - T_n \|_F,
\end{eqnarray*}
where  $C_n(T_n)=${\rm{circ}}$({\mathbf{c}})$,
with
\begin{eqnarray}\label{cj}
c_j=\dfrac{(n-j)\, t_j + j \, t_{n-j}}{n}, \quad
j \in \{1,2, \ldots, n-1\}; 
\end{eqnarray}
\begin{eqnarray}\label{c0}
c_0=t_0,
\end{eqnarray}
and $B_n(T_n)=${\rm{rcirc}}$({\mathbf{b}})$,
where: 
for $n$ even and $j \in \{1, 2, \ldots, n-1\}
\setminus \{n/2\}$,
\begin{eqnarray}\label{evenodd}
b_j&=& \dfrac{1}{2n} \left( \dfrac{4 \,j-2 \, n}{n}
(t_j - t_{n-j}) + 4 \sum_{k=1}^{(j-3)/2}\dfrac{2\,k+1}{n} 
(t_{2k+1}-t_{n-2k-1}) +\right. \nonumber
\\& +& \left. 4 \sum_{k=1}^{(n-j-3)/2}\dfrac{2\,k+1}{n}
(t_{2k+1}-t_{n-2k-1}) \right), \quad j \text{ odd; } 
\end{eqnarray}
\begin{eqnarray}\label{eveneven}
b_j&=& \dfrac{1}{2n} \left( \dfrac{4 \,j-2 \, n}{n}
(t_j - t_{n-j}) + 4 \sum_{k=1}^{j/2-1}\dfrac{2\,k}{n} 
(t_{2k}-t_{n-2k}) +\right. \nonumber
\\& +& \left. 4 \sum_{k=1}^{(n-j)/2-1}\dfrac{2\,k}{n}
(t_{2k}-t_{n-2k}) \right), \quad j \text{  even;  }
\end{eqnarray}
for $n$ even,
\begin{eqnarray}\label{even0}
b_0&=& \dfrac{2}{n} \left( \sum_{k=1}^{n/2-1} 
\dfrac{2\,k}{n} (t_{2k}-t_{n-2k}) \right),
\end{eqnarray}
\begin{eqnarray}\label{evenn2}
b_{n/2}&=& \dfrac{4}{n} \left( \sum_{k=1}^{n/4-1}
\dfrac{2\,k}{n} 
(t_{2k}-t_{n-2k})\right) ; 
\end{eqnarray}
for $n$ odd and $j \in \{1, 2,\ldots, n-1\}$,
\begin{eqnarray}\label{oddodd}
b_j&=& \dfrac{1}{2n} \left( \dfrac{4 \,j-2 \, n}{n}
(t_j - t_{n-j}) + 4 \sum_{k=0}^{(j-3)/2}\dfrac{2\,k+1}{n} 
(t_{2k+1}-t_{n-2k-1}) +\right. \nonumber
\\& +& \left. 4 \sum_{k=1}^{(n-j)/2-1}\dfrac{2\,k}{n}
(t_{2k}-t_{n-2k}) \right), \quad j \text{ odd; }
\end{eqnarray}
\begin{eqnarray}\label{oddeven}
b_j&=& \dfrac{1}{2n} \left( \dfrac{4 \,j-2 \, n}{n}
(t_j - t_{n-j}) + 4 \sum_{k=1}^{j/2-1}\dfrac{2\,k}{n} 
(t_{2k}-t_{n-2k}) +\right. \nonumber
\\& +& \left. 4 \sum_{k=0}^{(n-j-3)/2}\dfrac{2\,k +1}{n}
(t_{2k+1}-t_{n-2k-1}) \right), \quad j \text{  even;  } 
\end{eqnarray}
for $n$ odd,
\begin{eqnarray}\label{odd0}
b_0&=& \dfrac{2}{n} \left(\sum_{k=0}^{(n-3)/2} 
\dfrac{2\,k+1}{n} (t_{2k+1}-t_{n-2k-1}) \right).
\end{eqnarray}
\end{theorem}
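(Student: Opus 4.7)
The plan is to exploit the orthogonal decomposition $\mathcal{G}_n=\mathcal{C}_n\oplus\mathcal{B}_n$ established in Theorem \ref{directsum}. Since the Frobenius inner product is precisely the one with respect to which this sum is orthogonal, the Frobenius-closest $\gamma$-matrix to any $T_n$ splits as $G_n(T_n)=C_n(T_n)+B_n(T_n)$, where $C_n(T_n)$ is the orthogonal projection of $T_n$ onto $\mathcal{C}_n$ and $B_n(T_n)$ is the orthogonal projection onto $\mathcal{B}_n$; the two subproblems can then be treated independently. The equality of the minima over $\widehat{\mathcal{G}}_n$ and $\mathcal{G}_n$ follows immediately from $\mathcal{E}_n\subset\mathcal{C}_n$, giving $\widehat{\mathcal{G}}_n=\mathcal{C}_n+\mathcal{H}_{n,1}=\mathcal{C}_n+(\mathcal{B}_n\oplus\mathcal{E}_n)=\mathcal{C}_n\oplus\mathcal{B}_n=\mathcal{G}_n$, so the apparent enlargement adds no new flexibility.

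For $C_n(T_n)$ I would use the identification of $\mathcal{C}_n$ with the real symmetric circulant matrices (Corollary following Theorem \ref{cccirc}), and minimize $\|\mathrm{circ}(\mathbf{c})-T_n\|_F^2$ over symmetric $\mathbf{c}$. This is a quadratic in the independent parameters $c_0,c_1,\ldots,c_{\lfloor n/2\rfloor}$; equating each partial derivative to zero, and counting how many matrix entries carry each $c_j$ together with which $t_{|k-l|}$ values are paired with it via the Toeplitz identity, yields in closed form Chan's formula $c_j=((n-j)\,t_j+j\,t_{n-j})/n$. Symmetry $c_{n-j}=c_j$ holds by inspection, confirming $\mathrm{circ}(\mathbf{c})\in\mathcal{C}_n$.

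For $B_n(T_n)$ I would first compute the unconstrained Frobenius projection of $T_n$ onto the larger space $\mathcal{L}_{n,1}$ of all reverse circulants. Because each $b_s$ appears exactly $n$ times in $\mathrm{rcirc}(\mathbf{b})$, stationarity of $\|\mathrm{rcirc}(\mathbf{b})-T_n\|_F^2$ gives
\[
b_s=\frac{1}{n}\sum_{(k,l):\,k+l\equiv s\,(\mathrm{mod}\,n)} t_{|k-l|}.
\]
I would then project onto $\mathcal{B}_n=\mathcal{J}_{n,1}$ (Theorem \ref{thm:A*n struc}) by enforcing symmetry $b_j=b_{n-j}$, the vanishing $\sum_t b_t=0$, and, for even $n$, the vanishing $\sum_t(-1)^t b_t=0$. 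Since $\|\mathrm{rcirc}(\mathbf{b})\|_F^2=n\,\|\mathbf{b}\|_2^2$, these Frobenius projections reduce to Euclidean projections on the first row, so the two sum constraints are enforced by subtracting the appropriate constant and alternating-constant corrections. Splitting the antidiagonal sum defining $b_s$ according to parities and carrying out the constraint corrections produces the closed forms (\ref{evenodd})--(\ref{oddeven}) together with the special cases (\ref{even0}), (\ref{evenn2}) and the expressions for $b_0$ in (\ref{odd0}).

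The main obstacle is purely combinatorial bookkeeping: turning the compact antidiagonal-average formula for $b_s$, together with the two constraint corrections, into the parity-stratified closed forms displayed in the statement. The count depends on whether $s$ arises from $k+l=s$ or $k+l=s+n$, on whether the indices are even or odd, and on the parity of $n$; these three binary distinctions are precisely what generate the separate subcases appearing in the theorem, and unpacking them is the source of all the case analysis but involves no ideas beyond the orthogonal-projection framework above.
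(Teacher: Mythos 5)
Your proposal is correct but follows a genuinely different route from the paper's. The paper sets up the single functional $\phi(\mathbf{c},\mathbf{b})=\|T_n-\mathrm{circ}(\mathbf{c})-\mathrm{rcirc}(\mathbf{b})\|_F^2$ over the redundant parametrization by two symmetric vectors, writes out all the coupled stationarity equations $\partial\phi/\partial c_j=0$ and $\partial\phi/\partial b_j=0$ (case by case in the parities of $n$ and $j$), invokes convexity and \cite[Theorem 2.2]{DFZ} to get a unique stationary point, and only \emph{a posteriori} verifies that the resulting $\mathbf{b}$ satisfies the zero-sum conditions, which is what forces $B_n(T_n)\in{\mathcal B}_n$ and hence the equality of the two minima. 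You instead decouple from the outset using the Frobenius-orthogonal splitting ${\mathcal G}_n={\mathcal C}_n\oplus{\mathcal B}_n$ of Theorem \ref{directsum}, so that the best $\gamma$-approximation is the sum of two independent orthogonal projections; you obtain $C_n(T_n)$ as the T.~Chan projection (using that the circulant projection of a symmetric Toeplitz matrix is automatically a symmetric circulant), and $B_n(T_n)$ as a nested projection, first onto ${\mathcal L}_{n,1}$ by antidiagonal averaging and then onto ${\mathcal J}_{n,1}={\mathcal B}_n$ by subtracting the (mutually orthogonal, symmetry-preserving) constant and alternating-constant components. This buys a conceptual explanation of why the circulant and reverse-circulant parts do not interact and why the zero-sum corrections appear, and it replaces the convexity/uniqueness argument by elementary projection geometry; your observation that $\widehat{\mathcal G}_n={\mathcal C}_n+{\mathcal H}_{n,1}={\mathcal C}_n\oplus{\mathcal B}_n={\mathcal G}_n$ (via ${\mathcal H}_{n,1}={\mathcal B}_n\oplus{\mathcal E}_n$ and ${\mathcal E}_n\subset{\mathcal C}_n$) disposes of the first equality more cleanly than the paper does. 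The price is the same one the paper pays: the parity-stratified closed forms (\ref{evenodd})--(\ref{odd0}) still have to be extracted by explicit bookkeeping, which you describe but do not carry out; to make the argument complete you would need to check that the antidiagonal average of $T_n$ followed by the two rank-one corrections really reproduces the displayed coefficients $(4j-2n)/n$, $(2k+1)/n$, $2k/n$ in each case.
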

\begin{proof}
Let us define 
\begin{eqnarray*}\label{phi}
\phi(\mathbf{c},\mathbf{b})=\|T_n- \textrm{circ}({\mathbf{c}})
- \textrm{circ}({\mathbf{b}})\|^2_F
\end{eqnarray*}
for any two symmetric vectors
$\mathbf{c}$, $\mathbf{b} \in \mathbb{R}^n$.
If $j \in \{1,2, \ldots, n-1\}$, then we get
\begin{eqnarray} \label{fourth0}
\dfrac{\partial \phi(\mathbf{c},\mathbf{b})}{\partial c_j}=
-4(n-j)\, t_j -4 \, j \, t_{n-j}+4\sum_{j=0}^{n-1} b_j
+4\,n \, c_j .
\end{eqnarray}
%getting (\ref{cj}). 
Furthermore, one has
\begin{eqnarray} \label{fifth0}
\dfrac{\partial \phi(\mathbf{c},\mathbf{b})}{\partial c_0}=
- 2 \, n \, t_0+ 2
\sum_{j=0}^{n-1} b_j + 2 \, n \, c_0 .
\end{eqnarray}
%obtaining (\ref{c0}).

If $n$ is even and $j$ is odd, $j \in \{1, \ldots, n-1\}$, then,
%taking into account (\ref{cj}) and
%(\ref{c0}) and 
since $c_{n-j}=c_j$,
we have
\begin{eqnarray}\label{bjeo} 
\dfrac{\partial \phi(\mathbf{c},\mathbf{b})}{\partial b_j}&=&
-2 \left( 2 \, t_j + 4  \sum_{k=0}^{(j-3)/2} t_{2k+1}+ 2 \, t_{n-j} + 
4 \sum_{k=0}^{(n-j-3)/2} t_{2k+1}- \right. \nonumber
\\ & & \left. -4  \nonumber
\sum_{k=0}^{n/4-1} c_{2k+1} - 2 \, n \, b_j
\right)= \\&=& -2 \left( 2(t_j-c_j) + 4 
\sum_{k=0}^{(j-3)/2} (t_{2k+1}-c_{2k+1}) + 2 (t_{n-j} - c_{n-j}) +
\right. \\ & & \left. +
4 \sum_{k=0}^{(n-j-3)/2} (t_{2k+1}-c_{2k+1})-
2 \,n \, b_j \right). \nonumber
%= \\&=& -2 \left( 
%\dfrac{4\,j-2 \, n}{n} (t_j-t_{n-j}) + 4 
%\sum_{k=0}^{(j-3)/2}  \dfrac{2k+1}{n}
%(t_{2k+1}-t_{n-2k-1})  +
%\right. \\ &+& \left. 4 \sum_{k=0}^{(n-j-3)/2} \dfrac{2k+1}{n}
%(t_{2k+1}-t_{n-2k-1}) -
%2 \,n \, b_j \right),
\end{eqnarray}
%getting (\ref{evenodd}).

If both $n$ and $j$ are even, $j \in \{1, 2, \ldots, n-1\}
\setminus \{n/2\}$, then,
by arguing analogously as in the previous case, we deduce
\begin{eqnarray} \label{bjee}
\dfrac{\partial \phi(\mathbf{c},\mathbf{b})}{\partial b_j}&=&
-2\left(2(t_j-c_j) + 4 \nonumber
\sum_{k=1}^{j/2-1} (t_{2k}-c_{2k}) + 2 (t_{n-j} - c_{n-j}) +
\right. \\ & & \left. +4 (t_0 - c_0)+
4 \sum_{k=1}^{(n-j)/2-1} (t_{2k}-c_{2k})-
2 \,n \, b_j \right).
%= \\&=& -2 \left( 
%\dfrac{4\,j-2 \, n}{n} (t_j-t_{n-j}) + 4 
%\sum_{k=1}^{j/2-1}  \dfrac{2k}{n}
%(t_{2k}-t_{n-2k})  +
%\right. \\ & & \left. +
%4 \sum_{k=1}^{(n-j)/2-1} \dfrac{2k}{n}
%(t_{2k}-t_{n-2k}) -
%2 \,n \, b_j \right).
\end{eqnarray}
%getting (\ref{eveneven}). 
Moreover, if $n$ is even, then one has
%taking into account (\ref{c0}), it is
\begin{eqnarray}\label{bje0}
\dfrac{\partial \phi(\mathbf{c},\mathbf{b})}{\partial b_0}=
-2 \left(  t_0-c_0 +2 
\sum_{k=1}^{n/2-1} (t_{2k}-c_{2k}) -  n \, b_0
\right),
%\\ &=& -2 \left( 2 \sum_{k=1}^{n/2-1} 
%\dfrac{2k}{n}
%(t_{2k}-t_{n-2k}) -n \, b_0\right),
%\nonumber
\end{eqnarray}
getting (\ref{even0}). Furthermore, for $n$ even, we have
\begin{eqnarray}\label{bjen2} 
\dfrac{\partial \phi(\mathbf{c},\mathbf{b})}{\partial b_{n/2}}=
-2 \left( 2(t_{n/2}-c_{n/2}) +%2
4 \sum_{k=1}^{n/4-1} (t_{2k}-c_{2k}) 
%+2 \sum_{k=1}^{n/4-1} (t_{2k}-c_{2k}) 
-  n  b_{n/2}
\right). 
%\\ &=& -2 \left( 2 \sum_{k=1}^{n/4} 
%\dfrac{2k}{n} (t_{2k}-t_{n-2k}) 
%+2 \sum_{k=1}^{n/4-1}\dfrac{2k}{n} (t_{2k}-t_{n-2k}) 
%-  n \, b_{n/2}\right),
%\nonumber
\end{eqnarray}
%getting (\ref{evenn2}).

%Note that, taking into account 
%(\ref{cj}) and (\ref{c0}), it is not difficult to see that
%\begin{eqnarray}\label{first0}
%\sum_{k=0}^{n/4-1} b_{2k+1}=0
%\end{eqnarray}
%and 
%\begin{eqnarray}\label{second0}
%b_0+2\sum_{k=1}^{n/4-1} b_{2k}+b_{n/2}=0.
%\end{eqnarray}

Now, if both $n$ and $j$ are odd, $j \in \{0,1, \ldots, n-1\}$, then,
taking into account (\ref{cj}), we obtain
\begin{eqnarray}\label{bjoo} 
\dfrac{\partial \phi(\mathbf{c},\mathbf{b})}{\partial b_j}&=&
-2 \left( 2 \, (t_j -c_j)+ 4  \sum_{k=0}^{(j-3)/2} (t_{2k+1}
-c_{2k+1})+ 2 \, (t_{n-j}-c_{n-j}) + \right.\nonumber
\\ & & \left. +4 \sum_{k=1}^{(n-j)/2-1} (t_{2k}-c_{2k}) 
+2 \,( t_0 -  c_0) 
%4  \nonumber \sum_{k=1}^{(n-1)/2} c_k 
- 2 \, n \, b_j
\right). 
%\\ &=& -2\left(2(t_j-t_{n-j}) -4 \dfrac{(n-j)t_j+j \,
%t_{n-j}}{n}+ \nonumber
%4 \sum_{k=0}^{(j-3)/2} \dfrac{2 \, k + 1}{n}
%(t_{n-(2k+1)} - t_{2k+1}) + \right.
%\\&+& \left. 4 \sum_{k=1}^{(n-j)/2-1} \left(t_{2k} -
%\dfrac{(n-2 \, k) t_{2k} + 2 \, k \, t_{n-2 k}}{n} \right)- 2 \, n \, b_j 
%\right)=\\&=& -2 \left(\dfrac{2 \, n -j}{n} (t_{n-j}-t_j)  \right. \nonumber
%+ \\ &+& 
%4 \sum_{k=0}^{(j-3)/2} \left(t_{2k+1}- \dfrac{4}{n} (
%(n-2\, k -1)\, t_{2k+1}+(2 \, k + 1) \, t_{n-2k-1})
%\right) + \\ &+& \left.4 \sum_{k=1}^{(n-j)/2-1} \dfrac{2 \, k}{n}
%(t_{n-2k} -t_{2k}) -2 \, n \, b_j \right), \nonumber
\end{eqnarray}
%getting (\ref{oddodd}). 

If $n$ is odd and $j$ is even, $j \in \{0,1, \ldots, n-1\}$, then 
%by proceeding analogously as in the previous case, 
we have
\begin{eqnarray}\label{bjoe} 
\dfrac{\partial \phi(\mathbf{c},\mathbf{b})}{\partial b_j}&=&
-2 \left( 2 \, (t_j -c_j) + 4  \sum_{k=1}^{j/2-1} 
(t_{2k}-c_{2k})+ 2 \, (t_{n-j} - c_{n-j})+\right. \\& & + \left.
4 \sum_{k=1}^{(n-j-3)/2} (t_{2k+1}-c_{2k+1})+  \nonumber
2 \, (t_0 - c_0 )
%+ 4  \nonumber
%\sum_{k=1}^{(n-1)/2} c_k 
- 2 \, n \, b_j
\right).
\end{eqnarray}
%getting (\ref{oddeven}).

Finally, for $n$ odd, 
%taking into account (\ref{c0}), 
one has
\begin{eqnarray}\label{bjo0} 
\dfrac{\partial \phi(\mathbf{c},\mathbf{b})}{\partial b_0}=
-2 \left(  t_0-c_0 +2 \sum_{k=0}^{(n-3)/2} 
(t_{2k+1} - c_{2k+1})
%-2 \sum_{k=1}^{n/2-1} c_k 
- n \, b_0 \right).
%\\ &=& -2 \left( 2 \sum_{k=0}^{(n-3)/2} 
%\dfrac{2k+1}{n}
%(t_{2k}-t_{n-2k}) -n \, b_0\right),
%\nonumber
%getting (\ref{odd0}).
\end{eqnarray}
%Observe that, taking into account 
%(\ref{cj}) and (\ref{c0}), we have
%\begin{eqnarray}\label{third0}
%& & b_0+2\sum_{j=1}^{(n-1)/2} b_j=\\ 
%&=& \dfrac{2}{n} \left( n \, t_0 +
%2\sum_{k=1}^{(n-1)/2}((n-k)\, t_k + k \, t_{n-k})   -n \, c_0
%- 2 \, n \sum_{j=1}^{(n-1)/2} c_j \right)=
%0. \nonumber
%\qquad \Box
%\end{eqnarray}

It is not difficult to see that the
function $\phi$ is convex. By \cite[Theorem 2.2]{DFZ},
$\phi$ has exactly one point of minimum. From this it follows 
that $\phi$ admits exactly one stationary point. 
Now we claim that this point satisfies 
%
%Note that, taking into account 
%(\ref{cj}) and (\ref{c0}), it is not difficult to see that
\begin{eqnarray}\label{first0}
\sum_{k=0}^{n/4-1} b_{2k+1}=0
\end{eqnarray}
and 
\begin{eqnarray}\label{second0}
b_0+2\sum_{k=1}^{n/4-1} b_{2k}+b_{n/2}=0
\end{eqnarray}
when $n$ is even, and 
%
%Observe that, taking into account 
%(\ref{cj}) and (\ref{c0}), we have
\begin{eqnarray}\label{third0}
%& & 
b_0+2\sum_{j=1}^{(n-1)/2} b_j=0
%&=& \dfrac{2}{n} \left( n \, t_0 +
%2\sum_{k=1}^{(n-1)/2}((n-k)\, t_k + k \, t_{n-k})   -n \, c_0
%- 2 \, n \sum_{j=1}^{(n-1)/2} c_j \right)=
%0. \nonumber
\end{eqnarray}
if $n$ is odd, that is  $B_n(T_n) \in {\mathcal B}_n$.
From (\ref{first0})%
%, (\ref{second0}), 
-(\ref{third0}) and
(\ref{fourth0})-(\ref{fifth0}) we get (\ref{cj})-(\ref{c0}).
Furthermore, from (\ref{cj})-(\ref{c0}) and
(\ref{bjeo})%
%, (\ref{bjee}), (\ref{bje0}), (\ref{bjen2}),
%(\ref{bjoo}), (\ref{bjoe}) and 
-(\ref{bjo0}) we obtain
(\ref{evenodd})%
%, (\ref{eveneven}), (\ref{even0}),
%(\ref{evenn2}), (\ref{oddodd}), (\ref{oddeven}) and 
-(\ref{odd0}). Finally, (\ref{first0})-(\ref{second0}) follow
from (\ref{evenodd})%, (\ref{eveneven}), (\ref{even0}) and
-(\ref{evenn2}), while (\ref{third0}) is a consequence of 
(\ref{oddodd})%, (\ref{oddeven}) and
-(\ref{odd0}). 
\end{proof} 

Now, for $n \in \mathbb{N}$, set
\begin{eqnarray}\label{wienert}
\widehat{\mathcal T}_n &=&  
\{t \in {\mathcal T}_n: \text{  there is a function }
f(z)= \sum_{j=-\infty}^{+\infty} t_j \, z^j, \\ 
 & & \text{with  } z \in \mathbb{C},
%\text{   such   that   } 
|z|=1,
\text{  and  such  that   }
\sum_{j=-\infty}^{+\infty} |t_j| < + \infty  \}.\nonumber
\end{eqnarray}
Observe that any function defined by a power series
as in the first line of (\ref{wienert})
is real-valued, and the set of such functions 
satisfying the condition 
$$\sum_{j=-\infty}^{+\infty} |t_j| < + \infty $$
is called \emph{Wiener class} (see, e.g., 
\cite{BINIFAVATI1993}, 
\cite[\S3]{CHANetal}).

Given a function $f$ belonging to the Wiener class and
 a matrix $T_n \in \widehat{{\mathcal T}_n}$, 
$T_n(f)=(t_{k,j})_{k,j}$: $t_{k,j}=t_{|k-j|}$, $k,j \in \{0$,
$1, \ldots, n-1\}$, and $\displaystyle{f(z)= \sum_{j=-\infty}^{+\infty} t_j \, z^j}$, then we say that $T_n(f)$ is \emph{generated by $f$}.

We will often use the following property of 
absolutely convergent series (see, e.g., 
\cite{BINIFAVATI1993}, \cite{CHAN}).

\begin{lemma}\label{insp}
Let $\displaystyle{\sum_{j=1}^{\infty} t_j }$ be an
absolutely convergent series. Then, we get
\begin{eqnarray*}\label{inspproperty}
\lim_{n \to + \infty}\left[ \dfrac{1}{n} 
\left(\sum_{k=1}^n k \, |t_k|+
\sum_{k=\lceil(n+1)/2 \rceil}^n{(n-k)} \, |t_k|
\right)\right]=0.
\end{eqnarray*}
\end{lemma}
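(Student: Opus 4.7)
The plan is to split the expression into its two pieces and control each by exploiting that the tail $\sum_{k>N}|t_k|$ becomes arbitrarily small. Write
\[
I_n \, = \, \frac{1}{n}\sum_{k=1}^{n} k\,|t_k|, \qquad
J_n \, = \, \frac{1}{n}\sum_{k=\lceil (n+1)/2\rceil}^{n} (n-k)\,|t_k|,
\]
so that the quantity in the statement equals $I_n + J_n$. It suffices to show $I_n \to 0$ and $J_n \to 0$ separately. Fix $\varepsilon > 0$. Since $\sum_{k\geq 1} |t_k| < +\infty$, one can choose $N = N(\varepsilon) \in \mathbb{N}$ such that $\sum_{k>N}|t_k| < \varepsilon$.

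For $I_n$, I would split the range at $N$:
\[
I_n \, = \, \frac{1}{n}\sum_{k=1}^{N} k\,|t_k| \; + \; \frac{1}{n}\sum_{k=N+1}^{n} k\,|t_k|.
\]
The first term, for $n$ large enough, is bounded by $\frac{N}{n}\sum_{k=1}^{N}|t_k|$, which tends to $0$ as $n \to +\infty$ since $N$ is fixed. In the second term, estimate $k \leq n$ to obtain
\[
\frac{1}{n}\sum_{k=N+1}^{n} k\,|t_k| \;\leq\; \sum_{k=N+1}^{n}|t_k| \;\leq\; \sum_{k>N}|t_k| \;<\; \varepsilon.
\]
Hence $\limsup_{n} I_n \leq \varepsilon$, and since $\varepsilon$ is arbitrary, $I_n \to 0$. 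This is essentially Kronecker's lemma applied to the sequence $(|t_k|)$ with weights $b_k = k$, and I would cite the statement rather than reprove it if a citation is appropriate.

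For $J_n$, the key observation is that in the range of summation $k \geq \lceil (n+1)/2\rceil$, we have $n - k \leq \lfloor (n-1)/2\rfloor \leq n/2$, so
\[
J_n \;\leq\; \frac{1}{n}\cdot \frac{n}{2}\sum_{k=\lceil (n+1)/2\rceil}^{n}|t_k|
\;=\; \frac{1}{2}\sum_{k=\lceil (n+1)/2\rceil}^{n}|t_k|,
\]
which is at most half of the tail $\sum_{k \geq \lceil (n+1)/2\rceil}|t_k|$. Since $\lceil (n+1)/2\rceil \to +\infty$ and the series converges absolutely, this tail tends to $0$. Combining the two estimates gives $I_n + J_n \to 0$, as required. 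There is no real obstacle here; the argument is entirely routine, the only mild care being to correctly extract the factor $n/2$ in the estimate of $J_n$ from the range constraint on $k$ and to verify the tail bound holds uniformly in $\varepsilon$ for $I_n$.
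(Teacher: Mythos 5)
Your proof is correct and follows essentially the same route as the paper: both split the first sum at a fixed cutoff determined by the tail of $\sum|t_k|$ and observe that the second sum lives entirely in that tail. The only cosmetic difference is that you bound $(n-k)/n$ by $1/2$ and treat $J_n$ as a vanishing tail on its own, whereas the paper bounds $(n-k)/n$ by $1$ and folds both remainders into a single $\varepsilon$-estimate.
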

\begin{proof}
Let $\displaystyle{S=\sum_{j=1}^{\infty} |t_j| }$.
Choose arbitrarily $\varepsilon > 0$. By hypothesis,
there is a positive integer $n_0$ with 
\begin{eqnarray}\label{restseries}
\sum_{k=n_0+1}^{\infty} |t_k| \leq 
\dfrac{\varepsilon}{4}.
\end{eqnarray}
Let $\displaystyle{n_1=
\max\left\{\dfrac{2\, n_0 \,S}{\varepsilon}, 2 \, n_0\right\}}$.
Taking into account (\ref{restseries}),
for every $n > n_1$ it is
\begin{eqnarray*}
0&\leq& \dfrac{1}{n} \left(\sum_{k=1}^n k \, |t_k| +
\sum_{k=\lceil(n+1)/2 \rceil}^n{(n-k)} \, |t_k|\right)= \\&=&
\dfrac{1}{n} \sum_{k=1}^{n_0} k \, |t_k| +  \dfrac{1}{n}
\sum_{k=n_0+1}^{n} k \, |t_k|+ \dfrac{1}{n}
\sum_{k=\lceil(n+1)/2 \rceil}^{n} (n-k) \, |t_k|\leq 
\\ &\leq& \dfrac{1}{n_1} \, n_0
\sum_{k=1}^{n_0} |t_k| +  
%\dfrac{1}{n} \, n
%\sum_{k=n_0+1}^{n} |t_k|+ 
2 \sum_{k=n_0+1}^{n} |t_k|\leq 
\dfrac{\varepsilon}{2 \, n_0 \, S} \, n_0 \, S + 2 \,
\dfrac{\varepsilon}{4}=\varepsilon.
\end{eqnarray*}
So, the assertion follows.
\end{proof}
\begin{theorem}\label{calculusestimate}
For $n \in \mathbb{N}$, given $T_n(f) \in 
\widehat{\mathcal T}_n$, let $C_n(f)=C_n(T_n(f))$,
$B_n(f)=B_n(T_n(f))$ be
as in Theorem \ref{calculus}, and set
$G_n(f)= C_n(f) + B_n(f)$ . Then, one has: 
\begin{description}
\item[\rm{\ref{calculusestimate}.1)}]
for every 
$\varepsilon >0$ there is a positive integer $n_0$, 
such that for each $n \geq n_0$ 
and for every eigenvalue
$\lambda_j^{(G_n(f))}$ of $G_n(f)$,
 it is
\begin{eqnarray}\label{propprop}
\lambda_j^{(G_n(f))} \in [f_{\rm{min}} - \varepsilon,
f_{\rm{max}} + \varepsilon], \quad j \in \{0,1, \ldots, n-1\},
\end{eqnarray}
where $f_{\rm{min}}$ and $f_{\rm{max}}$
denote the minimum and the maximum value of $f$, 
respectively;
\item[\rm{\ref{calculusestimate}.2)}] 
for every $\varepsilon > 0$ there are $k$, $n_1 \in 
\mathbb{N}$ such that for each $n \geq n_1$ 
the number of eigenvalues $\lambda_j^{((G_n(f))^{-1} \, T_n(f))}$ of 
$G_n^{-1} \, T_n(f)$ such that $|\lambda_j^{((G_n(f))^{-1} \, T_n(f))} - 1 | > \varepsilon$
is less than $k$, namely the spectrum of $(G_n(f))^{-1} \, T_n(f)$ is clustered around $1$.
\end{description}
\end{theorem}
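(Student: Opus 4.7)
The plan is to attack both parts through explicit Fourier-type formulas for the eigenvalues of $G_n(f)$, relying throughout on Lemma \ref{insp} to control the tails and the $1/n$-weighted head of the Fourier coefficients of $f$.

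For part \ref{calculusestimate}.1, the first step is to write out $\lambda_j^{(G_n(f))} = \lambda_j^{(C_n(f))} + \lambda_j^{(B_n(f))}$ using Theorems \ref{Theorem3.1} and \ref{Theorem3.126}. Substituting the formula $c_j = ((n-j)t_j + jt_{n-j})/n$ from (\ref{cj})--(\ref{c0}) and changing variable $k \mapsto n-k$ in the cross term, one checks that
\[
\lambda_j^{(C_n(f))} = t_0 + 2\sum_{k=1}^{n-1} \frac{n-k}{n}\, t_k \cos\!\Bigl(\frac{2\pi jk}{n}\Bigr),
\]
so $|\lambda_j^{(C_n(f))} - f(2\pi j/n)|$ is dominated by $\frac{2}{n}\sum_{k=1}^{n-1} k|t_k| + 2\sum_{k\geq n} |t_k|$, which vanishes uniformly in $j$ by Lemma \ref{insp}. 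The second step is to bound $\lambda_j^{(B_n(f))}$: by (\ref{evenodd})--(\ref{odd0}) each $b_k$ is a weighted sum of differences $t_p - t_{n-p}$ with weights proportional to $p/n$, so $\sum_k |b_k| \leq \frac{C}{n}\sum_{p=1}^{n-1} p\,(|t_p| + |t_{n-p}|)$, which again tends to zero by Lemma \ref{insp}. Hence $|\lambda_j^{(B_n(f))}| \to 0$ uniformly in $j$, and together with $f_{\min} \leq f(2\pi j /n) \leq f_{\max}$ this yields (\ref{propprop}).

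For part \ref{calculusestimate}.2, I would mimic the classical Chan--Yeung clustering argument. Write $R_n := T_n(f) - G_n(f)$ and split $R_n = L_n + U_n$, where $L_n$ collects the entries of $R_n$ whose index $|k-j|$ is close to $0$ or close to $n$ (a ``corner mass'' supported on at most a constant number of bands adjacent to the main diagonal and to the anti-diagonal), while $U_n$ is the remainder. One checks, using the explicit formulas for $c_j$ and $b_j$, that $\|U_n\|_F^2$ is bounded by a sum of the form $\sum_k \tfrac{k^2}{n}|t_k|^2$ truncated away from the corners, so $\|U_n\|_2 \leq \|U_n\|_F \to 0$ by absolute summability together with Lemma \ref{insp}, whereas $\operatorname{rank}(L_n)$ is bounded by a constant $k_0 = k_0(\varepsilon)$ independent of $n$. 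Part \ref{calculusestimate}.1 (together with the implicit hypothesis $f_{\min} > 0$ ensuring invertibility of $G_n(f)$) gives $\|G_n(f)^{-1}\|_2 \leq 1/(f_{\min}-\varepsilon)$ for $n$ large; writing
\[
G_n(f)^{-1} T_n(f) = I + G_n(f)^{-1} L_n + G_n(f)^{-1} U_n,
\]
the second summand has uniformly bounded rank and the third has arbitrarily small spectral norm, and a standard Cauchy interlacing or Weyl-type perturbation argument then forces all but at most $k_0$ eigenvalues of $G_n(f)^{-1} T_n(f)$ into $(1-\varepsilon, 1+\varepsilon)$.

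The main obstacle I anticipate is the explicit bookkeeping for the reverse-circulant contribution, specifically proving $\sum_k |b_k| = o(1)$. The formulas (\ref{evenodd})--(\ref{odd0}) are structurally telescoping but split across four parity cases $(n, j)$, and it must be verified that no cancellation is needed when passing from a pointwise bound on $b_k$ to a uniform bound on $\sum_k |b_k|$. Equally delicate is the design of the splitting $R_n = L_n + U_n$ in part \ref{calculusestimate}.2: the extra reverse-circulant summand in $G_n(f)$ means the residual carries mass in \emph{two} corners rather than one, so $L_n$ has to be tailored to absorb diagonal and anti-diagonal deviations \emph{simultaneously} while keeping its rank bounded.
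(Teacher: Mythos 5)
Your proposal follows essentially the same route as the paper's proof: part 1 is handled by splitting $\lambda_j^{(G_n(f))}=\lambda_j^{(C_n(f))}+\lambda_j^{(B_n(f))}$, showing the circulant eigenvalues approximate the symbol $f$ up to error terms controlled by Lemma \ref{insp}, and bounding $|\lambda_j^{(B_n(f))}|\le\sum_h|b_h|\to 0$ via the same $\tfrac1n\sum_p p(|t_p|+|t_{n-p}|)$ estimate that the paper verifies case by case; part 2 uses the same low-rank plus small-norm decomposition of $T_n(f)-G_n(f)$ followed by Cauchy interlacing and the bound $\|G_n(f)^{-1}\|\le 1/(f_{\min}-\varepsilon)$. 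The only cosmetic difference is your use of the Frobenius/spectral norms where the paper works with the $1$-norm, so the argument is correct and matches the paper's.
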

\begin{proof} We begin with proving \ref{calculusestimate}.1). 
Choose arbitrarily $\varepsilon >0$. We denote by 
$\lambda_j^{(C_n(f))}$ (resp., $\lambda_j^{(B_n(f))}$)
the generic $j$-th eigenvalue 
of $C_n(f)$ (resp., $B_n(f)$), in the order given by 
Theorem \ref{Theorem3.1} (resp., Theorem \ref{Theorem3.126}).
To prove the assertion it is enough to show that
property (\ref{propprop}) holds 
(in correspondence with $\varepsilon/2$) for each
$\lambda_j^{(C_n(f))}$, $j =0$, $1, \ldots, n-1$, and that 
\begin{eqnarray}\label{lambd}
\lambda_j^{(B_n(f))} \in [-\varepsilon/2,\varepsilon/2] 
\text{   for  every   } 
n \geq n_0 \text{  and  } j\in \{0,1, \ldots, n-1\} .
\end{eqnarray}
Indeed, since $C_n(f)$, $B_n(f) \in  
{\mathcal G}_n$, we have
$$\lambda_j^{(G_n(f))}=\lambda_j^{(C_n(f))}+\lambda_j^{(B_n(f))}
\text{  for  all  }j \in \{0, 1, \ldots, n-1\},
$$ getting the assertion.

We first consider the case $n$ odd.
For every $j \in \{0, 1, \ldots, n-1\}$, 
since $c_j=c_{n-j}$ and thanks to (\ref{cj}), one has
\begin{eqnarray}\label{ruota}   \nonumber
\left|\lambda_j^{(C_n(f))}\right|&=&\left|\sum_{h=0}^{n-1} c_h
\cos\left( 2 \, \pi \, h \, j \right) \right|=\left|
c_0 + 2 \sum_{h=1}^{(n-1)/2}c_h
\cos\left( 2 \, \pi \, h \, j \right) \right|=\\&=& \left|
t_0 + 2 \sum_{h=1}^{(n-1)/2}t_h \nonumber
\cos\left( 2 \, \pi \, h \, j \right) \right.-
\\ &-& \left. \sum_{h=1}^{(n-1)/2}
\frac{h}{n} t_h \cos\left( 2 \, \pi \, h \, j \right) + \sum_{h=1}^{(n-1)/2}
\frac{h}{n} t_{n-h}\cos\left( 2 \, \pi \, h \, j \right) \right|\leq
\\ &\leq&\sum_{h=-(n-1)/2}^{(n-1)/2}
| t_h |\left(e^{\textrm{i}\frac{2 \, \pi \, j}{n}}
\right)^h \nonumber +\sum_{h=1}^{(n-1)/2}
\frac{h}{n} |t_h| + \sum_{h=1}^{(n-1)/2}
\frac{h}{n} |t_{n-h}|\leq\\ &\leq& \sum_{h=-\infty}^{+\infty}
| t_h |\left(e^{\textrm{i}\frac{2 \, \pi \, j}{n}}
\right)^h +\sum_{h=1}^{(n-1)/2}
\frac{h}{n} |t_h| + \sum_{h=(n+1)/2}^{n-1}
\frac{n-h}{n} |t_{h}|. \nonumber
\end{eqnarray}
Choose arbitrarily $\varepsilon > 0$.
Note that the first addend of the last term in (\ref{ruota}) tends to
$f \left(e^{\textrm{i}\frac{2 \, \pi \, j}{n}}
\right)$ as $n$ tends to $+\infty$, and hence,
without loss of generality, we can suppose that it
belongs to the interval $[f_{\rm{min}} - \varepsilon/6,
f_{\rm{max}} + \varepsilon/6]$ for $n$ sufficiently large.
By Lemma \ref{insp}, it is
\begin{eqnarray*}\label{firstmialc}
\lim_{n \to +\infty} \left(
\sum_{h=1}^{(n-1)/2}
\frac{h}{n} |t_h| + \sum_{h=(n+1)/2}^{n-1}
\frac{n-h}{n} |t_{h}|\right)=0.
%\sum_{h=1}^{n-1}
%\dfrac{h}{n} |t_h| =0.
\end{eqnarray*}
%From Lemma \ref{insp}
%%(\ref{S}) [and (\ref{S2})] 
%it follows that
%\begin{eqnarray}\label{revealtrick} 0 \leq 
%\sum_{j=\overline{n}+1}^{(n-1)/2} 
%\frac{j}{n} |t_j| \leq \dfrac12 \left( \sum_{j=\overline{n}+1}^{(n-1)/2} 
% |t_j|\right) \leq \dfrac12
%\left( \sum_{j=\overline{n}+1}^{\infty} 
% |t_j|\right) \to 0,
%\end{eqnarray}
%so that all terms in (\ref{revealtrick}) tend to $0$.
When $n$ is even, we get
\begin{eqnarray*}\label{ruotaa}   \nonumber
\left|\lambda_j^{(C_n(f))}\right|&=&\left|\sum_{h=0}^{n-1} c_h
\cos\left( 2 \, \pi \, h \, j \right) \right|=\left|
c_0 + 2 \sum_{h=1}^{n/2-1}c_h
\cos\left( 2 \, \pi \, h \, j \right) + (-1)^{n/2} 
c_{n/2}\right|=\\&=& \left|
t_0 + 2 \sum_{h=1}^{n/2-1} \cos\left( 2 \, \pi \, h \, j \right) 
t_h +(-1)^{n/2} t_{n/2} - \right.  \nonumber \\ 
 &-& \left.\sum_{h=1}^{n/2-1}
\frac{h}{n} t_h \cos\left( 2 \, \pi \, h \, j \right)+ \sum_{h=1}^{n/2-1}
\frac{h}{n} t_{n-h}\cos\left( 2 \, \pi \, h \, j \right)\right|\leq
\\ &\leq&\sum_{h=-n/2+1}^{n/2}
| t_h |\left(e^{\textrm{i}\frac{2 \, \pi \, j}{n}}
\right)^h \nonumber +\sum_{h=1}^{n/2-1}
\frac{h}{n} |t_h| + \sum_{h=1}^{n/2-1}
\frac{h}{n} |t_{n-h}|\\ &\leq& \sum_{h=-\infty}^{+\infty}
| t_h |\left(e^{\textrm{i}\frac{2 \, \pi \, j}{n}}
\right)^h  +\sum_{h=1}^{n/2-1}
\frac{h}{n} |t_h| + \sum_{h=n/2+1}^{n-1}
\frac{n-h}{n} |t_{h}|. \nonumber
\end{eqnarray*}
Thus, it is possible to repeat the same argument used in
the previous case, getting \ref{calculusestimate}.1).

Now we turn to \ref{calculusestimate}.2). 
From Theorem \ref{Theorem3.126} we obtain
\begin{eqnarray*} 
\lambda_j^{(B_n(f))}= \left\{
\begin{array}{ll}
\displaystyle{\sum_{h=0}^{n-1} \, b_h \cos 
\left(\dfrac{2 \, \pi \, j \, h}{n}\right)} & \text{ if } j \leq n/2,\\
\\ \displaystyle{- \sum_{h=0}^{n-1} \, b_h \cos 
\left(\dfrac{2 \, \pi \, (n-j) \, h}{n}\right)} & \text{ if } j > n/2.
\end{array}\right.
%fare un array
\end{eqnarray*}
So, without loss of generality, it is enough to prove 
\ref{calculusestimate}.2) for $j \leq n/2$. 

We first consider the case when $n$ is even. We get
\begin{eqnarray}\label{vwxyz}
\left|\lambda_j^{(B_n(f))}\right| &\leq& \left| \nonumber
\displaystyle{\sum_{h=0}^{n-1} \, b_h \cos 
\left(\dfrac{2 \, \pi \, j \, h}{n}\right)} 
\right| \leq \sum_{h=0}^{n-1}|b_h| = \\ &=& 
|b_0 | + \sum_{h=1}^{n/4-1}|b_{2h}| + 
\sum_{h=n/4 +1}^{n/2-1}|b_{2h}| + |b_{n/2}|+
\sum_{h=0}^{n/2-1}|b_{2h+1}|= \\&=&
I_1+I_2+I_3+I_4+I_5. \nonumber
\end{eqnarray}
So, in order to obtain \ref{calculusestimate}.2), it is enough 
to prove that each addend
of the last line of (\ref{vwxyz}) tends to $0$ as 
$n$ tends to $+\infty$. We get:
\begin{eqnarray}\label{I1estimate} 
I_1 &=& |b_0|\leq \sum_{k=1}^{n/2-1} \dfrac{4 \, k}{n^2} |t_{2k}|+
\sum_{k=1}^{n/2-1} \dfrac{4 \, k}{n^2} |t_{n-2k}|=
\\&=&\sum_{k=1}^{n/2-1} \dfrac{4 \, k}{n^2} |t_{2k}|+
\sum_{k=1}^{n/2-1} \dfrac{2 \, n-4 \, k}{n^2} |t_{2k}|
\leq \dfrac{4 \, n - 8}{n^2} \sum_{h=1}^{\infty}|t_h| 
\leq \dfrac{4 \, S}{n}, \nonumber
%I_1+I_2;
\end{eqnarray}
%\begin{eqnarray}\label{I1estimate}
%I_1 &=& \sum_{k=1}^{n/2-1} \dfrac{4 \, k}{n^2} |t_{2k}| \leq 
%\dfrac{2 \, n - 4}{n^2} \sum_{h=1}^{\infty}|t_h| \leq 
%\dfrac{2 \, S}{n},\\ \nonumber
%I_2 &=& \sum_{k=1}^{n/2-1} \dfrac{2 \, n -4 \, k}{n^2} |t_{2k}| \leq 
%\dfrac{2 \, n - 4}{n^2} \sum_{h=1}^{\infty}|t_h| \leq 
%\dfrac{2 \, S}{n},
%\end{eqnarray}
where $\displaystyle{S=\sum_{h=1}^{\infty}|t_h|}$.
From (\ref{I1estimate}) it follows that $I_1=|b_0|$
tends to $0$ 
as $n$ tends to $+ \infty$. Analogously it is possible to check 
that $I_4=|b_{n/2}|$ tends to $0$ 
as $n$ tends to $+ \infty$.

Now we estimate the term $I_2+I_3$. We first observe that
\begin{eqnarray}\label{otherestimate1}
& & 
%\sum_{h=1}^{n/4-1}|b_{2h}| + 
%\sum_{h=n/4 +1}^{n/2-1}|b_{2h}| \leq 
\dfrac{1}{n^2} \nonumber
\sum_{h=1}^{n/4-1} (4 \, h - n) (|t_{2h}|+|t_{n-2h}|)+
\dfrac{1}{n^2}
\sum_{h=n/4+1}^{n/2-1} (4 \, h - n) (|t_{2h}|+|t_{n-2h}|)\leq
\\ &\leq&
\dfrac{1}{n^2}
\sum_{h=1}^{n/2-1} (4 \, h - n) (|t_{2h}|+|t_{n-2h}|) \leq 
\\&\leq& \dfrac{1}{n^2}
\sum_{h=1}^{n/2-1} 4 \, h |t_{2h}| + \nonumber
\nonumber \dfrac{1}{n^2}
\sum_{h=1}^{n/2-1} (4 \, h - n) |t_{n-2h}| =\\
&=&\dfrac{1}{n^2}
\sum_{h=1}^{n/2-1} 4 \, h |t_{2h}| +\dfrac{1}{n^2}
\sum_{h=1}^{n/2-1} (2 \, n-4 \, h ) |t_{2h}|. \nonumber
\end{eqnarray}
Arguing analogously as in (\ref{I1estimate}), it is possible to see
that the quantities at the first hand of
(\ref{otherestimate1}) tend to $0$ 
as $n$ tends to $+ \infty$.

Furthermore, we have
\begin{eqnarray}\label{os2}
\dfrac{2}{n} \sum_{h=1}^{n/2-1}\left( 
\sum_{k=1}^{h-1}\dfrac{2k}{n} |t_{2k}| \right)=
\sum_{k=1}^{n/2-2} \dfrac{4k}{n^2}\left( \dfrac{n}{2}-2-k\right)
|t_{2k}| \leq \sum_{k=1}^{n/2-2} \dfrac{2k}{n}|t_{2k}|,
\end{eqnarray}
\begin{eqnarray}\label{os3}
\dfrac{2}{n} \sum_{h=1}^{n/2-1}\left( 
\sum_{k=1}^{h-1}\dfrac{2k}{n} |t_{n-2k}| \right)=
\sum_{k=1}^{n/2-2} \dfrac{4k}{n^2}\left( \dfrac{n}{2}-2-k\right)
|t_{n-2k}| \leq \sum_{k=2}^{n/2-1} \dfrac{2k}{n}|t_{2k}|,
\end{eqnarray}
\begin{eqnarray}\label{os4}
\dfrac{2}{n} \sum_{h=1}^{n/2-1}\left( 
\sum_{k=1}^{n/2-h-1}\dfrac{2k}{n} |t_{2k}| \right)
\leq \sum_{k=1}^{n/2-2} \dfrac{2k}{n}|t_{2k}|,
\end{eqnarray}
and
\begin{eqnarray}\label{os5}
& & \dfrac{2}{n} \sum_{h=1}^{n/2-1}\left( \nonumber
\sum_{k=1}^{n/2-h-1}\dfrac{2k}{n} |t_{n-2k}| \right) \leq 
 \sum_{k=1}^{n/2-2} \dfrac{4 \, k}{n^2} \left(
\dfrac{n-2 \, k - 2}{2} \right) |t_{n - 2k}| = \\ &=&
\sum_{k=2}^{n/2-1} \dfrac{(n - 2 \, k) \, (2 \, k - 2)}{n^2}
|t_{2k}|
\leq  \sum_{k=2}^{n/2-1} \dfrac{2k}{n}|t_{2k}|.
\end{eqnarray}
Summing up (\ref{otherestimate1})-(\ref{os5}), from
(\ref{eveneven}) we obtain
\begin{eqnarray}\label{os6}
I_2 + I_3 &=&
\sum_{h=1}^{n/4-1}|b_{2h}| + \nonumber
\sum_{h=n/4 +1}^{n/2-1}|b_{2h}| \leq
\dfrac{1}{n^2}
\sum_{h=1}^{n/2-1} 4 \, h |t_{2h}| + \dfrac{1}{n^2}
\sum_{h=1}^{n/2-1} 4 \, h |t_{n-2h}| + \\  &+&
\sum_{k=1}^{n/2-2} \dfrac{4k}{n}|t_{2k}| +
\sum_{k=2}^{n/2-1} \dfrac{4k}{n}|t_{2k}|.
\end{eqnarray}

Thus, taking into account Lemma \ref{insp}, it is possible to
check that the terms at the right hand of
(\ref{os6}) tend to $0$ as $n$ tends to
$+\infty$.

Now we estimate the term $I_5$. One has
%\begin{eqnarray}\label{otherestimate1bis}
%%& & \sum_{h=1}^{n/4-1}|b_{2h}| + 
%%\sum_{h=n/4 +1}^{n/2-1}|b_{2h}| \leq 
%& & \nonumber
%\dfrac{1}{n^2}
%\sum_{h=1}^{n/2-1} (4 \, h - n) (|t_{2h+1}|+|t_{n-2h-1}|) \leq 
%\\ &\leq& \dfrac{1}{n^2}
%\sum_{h=1}^{n/2-1} 4 \, h |t_{2h+1}| + \dfrac{1}{n^2}
%\sum_{h=1}^{n/2-1} 4 \, h |t_{n-2h-1}|,
%\end{eqnarray}
\begin{eqnarray*}\label{os3bis} & & \nonumber
\dfrac{2}{n} \sum_{h=0}^{n/2-1}\left( 
\sum_{k=0}^{h-1}\dfrac{2k+1}{n} |t_{2k+1}| \right)=
\sum_{k=0}^{n/2} \dfrac{2k+1}{n}|t_{2k+1}|= \\&=&
\sum_{k=0}^{n/2} \dfrac{2k}{n}|t_{2k+1}| + 
\sum_{k=1}^{n/2} \dfrac{1}{n}|t_{2k+1}| =J_1+J_2.
\end{eqnarray*}
Thanks to Lemma \ref{insp}, it is possible to
check that $J_1$ tends to $0$ as $n$ tends to $+\infty$.
Moreover, we have 
\begin{eqnarray}\label{159new}
0 \leq J_2 \leq \dfrac{S}{n},
\end{eqnarray}
and hence $I_4$ tends to $0$ as $n$ tends to $+\infty$.
Analogously as in the previous case, it is possible to prove that 
\begin{eqnarray}\label{os6bis}
I_5&=&\sum_{k=0}^{n/2-1}|b_{2k+1}|\nonumber
\leq
\dfrac{1}{n^2}
\sum_{k=0}^{n/2-1} 2\, (2\, k + 1) |t_{2k+1}| + \\&+&
 \dfrac{2}{n}
\sum_{k=1}^{n/2-2}  \left(\dfrac{n}{2} -1 -k\right) 
\left( \dfrac{2 \, k +1}{n}\right)|t_{n-2k-1}|\leq \\  &\leq&
\sum_{k=1}^{n/2-2} \dfrac{2\, (2\, k + 1) }{n}|t_{2k+1}| +
\sum_{k=2}^{n/2-1} \dfrac{2\, (2\, k + 1) }{n}|t_{2k+1}|.
\nonumber
\end{eqnarray}
By virtue of Lemma \ref{insp} and (\ref{159new}), we
get that $I_5$ tends to $0$ as $n$ tends to $+\infty$. Therefore,
all addends of the right hand of (\ref{vwxyz}) tend 
to $0$ as $n$ tends to $+ \infty$. Thus, (\ref{lambd})
follows from (\ref{vwxyz}), (\ref{I1estimate}), (\ref{os6})
and (\ref{os6bis}).

When $n$ is odd, it is possible 
to proceed analogously as in previous case.
This proves \ref{calculusestimate}.1).
\vspace{3mm}

Now we turn to \ref{calculusestimate}.2), that is 
we prove that the spectrum of $(G_n(f))^{-1} \, T_n(f)$ 
is clustered around $1$. Since
%\begin{eqnarray}\label{spectrum}
$(G_n(f))^{-1} (T_n(f) - G_n(f)) =(G_n(f))^{-1} T_n(f) - I_n$,
%p(G_n^{-1} T_n),
%\end{eqnarray}
where $I_n$ is the identity matrix, it is enough
%
%$p(x)=x-1$ is a polynomial of matrices. From 
%(\ref{spectrum}) it follows that, if $\lambda$ is any eigenvalue
%of $G_n^{-1} T_n$, then $p(G_n^{-1} T_n(x))=
%p(\lambda) x$. So, to our aims, it is enough 
to check that the
eigenvalues of $(G_n(f))^{-1} (T_n (f)- G_n(f))$ are clustered 
around $0$. 

Choose arbitrarily $\varepsilon > 0$. 
Since $f$ belongs 
to the Wiener class, there exists a positive integer $n_0
=n_0(\varepsilon)$ such that 
$$\sum_{j=n_0+1}^{\infty}|t_j| \leq \varepsilon.$$
Proceeding similarly as in the proof of 
\cite[Theorem 3 (ii)]{BINIFAVATI1993}, we get
\begin{eqnarray*}
T_n(f) - G_n(f)= T_n(f) - C_n(f) - B_n(f) =
E_n^{(n_0)} + W_n^{(n_0)} + Z_n^{(n_0)},
\end{eqnarray*}
where $E_n^{(n_0)}$, $W_n^{(n_0)}$ and $Z_n^{(n_0)}$ are 
suitable matrices such that 
$E_n^{(n_0)}$ and $W_n^{(n_0)}$ agree
with the 
$(n - n_0) \times (n - n_0)$ leading principal
submatrices of $T_n(f)-C_n(f)$ and $B_n(f)$, respectively.
We have:
\begin{eqnarray}\label{estimates}
\nonumber
{\rm{rank}}(E_n^{(n_0)}) &\leq& 2 \, n_0; \\
\|W_n^{(n_0)}\|_1 &\leq& \frac{2}{n} \sum_{k=1}^{n-n_0-1}
k \, |t_{n-k}-t_k| \leq  \frac{2}{n} \sum_{k=1}^{n_0}
k \, |t_k| + 4 \sum_{k=n_0+1}^{\infty} |t_k|; \\  
\|Z_n^{(n_0)}\|_1 &\leq& \sum_{h=0}^{n-1} |b_h|,
\nonumber
\end{eqnarray}
where the symbol 
$\|\cdot\|_1$ denotes the $1$-norm of the involved matrix.
Let $n_1 > n_0 $ be a positive integer with 
\begin{eqnarray}\label{inspestimates}
\dfrac{1}{n_1}\sum_{k=1}^{n_0} k \, |t_k| \leq 
\varepsilon \,  \, \text{     and     } \,\,
\sum_{h=0}^{n-1} |b_h| \leq 
\varepsilon.
\end{eqnarray} Note that such an $n_1$ does exist,
thanks to Lemma \ref{insp} and since all terms of
(\ref{vwxyz}) tend to $0$ as $n$ tends to $+ \infty$.
From (\ref{estimates}) and (\ref{inspestimates}) 
it follows that 
\begin{eqnarray}\label{int}
\|W_n^{(n_0)} +W_n^{(n_0)}\|_1
\leq \|W_n^{(n_0)}\|_1 + \|Z_n^{(n_0)}\|_1
\leq 8 \varepsilon.
\end{eqnarray}
From (\ref{int}) and the Cauchy interlace theorem
(see, e.g., \cite{WILKINSON}) we deduce that the eigenvalues 
of $T_n(f)-G_n(f)$ are clustered around $0$,
with the exception of at most $k= 2 \, n_0$ of them. 
By virtue of the Courant-Fisher minimax characterization 
of the matrix $(G_n(f))^{-1} \,( T_n(f)-G_n(f) )$
(see, e.g., \cite{WILKINSON}), we obtain
\begin{eqnarray} \label{fin}
\lambda_j^{(G_n(f))^{-1} \, (T_n(f)-G_n(f) )} \leq
\dfrac{\lambda_j^{(T_n(f)-G_n(f) )}}{f_{\rm{min}}}
\end{eqnarray}
for $n$ large enough. From (\ref{fin}) we deduce 
that the spectrum of $(G_n(f))^{-1} \,( T_n(f)-G_n(f) )$
is clustered around $0$, namely for every 
$\varepsilon > 0$ there are $k$, $n_1 \in \mathbb{N}$
with the property that for each $\varepsilon > 0$ 
the number of eigenvalues 
$\lambda_j^{(G_n(f))^{-1} \, T_n(f)}$ such that 
$\left|\lambda_j^{(G_n(f))^{-1} \, T_n(f)}-1\right|>
\varepsilon$ is at most equal to $k$.
%The assertion follows by proceeding 
%analogously as in the proof of 
%\cite[Theorem 3 (ii)]{BINIFAVATI1993}. 
%mcgplrcg
\end{proof}
\section{Conclusions}
We analyzed a new class of 
simultaneously diagonalizable real matrices,
the $\gamma$-ma\-tri\-ces. Such a class
includes both symmetric circulant matrices and a subclass of
reverse circulant matrices.
We dealt with both spectral and structural
properties of this family of matrices. 
Successively, we defined some algorithms 
for fast computation
of the product between a $\gamma$-matrix
and a real vector 
and between two $\gamma$-matrices, and we proved
that the computational cost of a multiplication
between a $\gamma$-matrix and a real vector is of at most
$\frac74 \, n \, \log_2 n+o( n \, \log_2 n)$ additions and
$\frac12 \, n \, \log_2 n+o( n \, \log_2 n)$ multiplications, 
and the computational cost of a multiplication
between two $\gamma$-matrices is at most
$\frac92 \, n \, \log_2 n+o( n \, \log_2 n)$ additions and
$\frac32 \, n \, \log_2 n+o( n \, \log_2 n)$ multiplications.
Finally, we gave a technique of approximating a real symmetric
Toeplitz matrix by a $\gamma$-matrix, and we proved how
the eigenvalues of the preconditioned matrix are clustered 
around $0$ with the exception of at most a finite number 
of terms.

%\begin{acknowledgements}
%If you'd like to thank anyone, place your comments here
%and remove the percent signs.
%\end{acknowledgements}

% Authors must disclose all relationships or interests that 
% could have direct or potential influence or impart bias on 
% the work: 
%
% \section*{Conflict of interest}
%
% The authors declare that they have no conflict of interest.

% BibTeX users please use one of
%\bibliographystyle{spbasic}      % basic style, author-year citations
%\bibliographystyle{spmpsci}      % mathematics and physical sciences
%\bibliographystyle{spphys}       % APS-like style for physics
%\bibliography{}   % name your BibTeX data base

\begin{thebibliography}{99}
\bibitem{ahmed} Ahmed, N., Natarajan, T., Rao, K. R.:
Discrete Cosine Transform. IEEE Trans. on Computers \textbf{23},
90--93 (1974)
\bibitem{andrecut} Andrecut, M.: Applications 
of left circulant matrices in signal and image processing,
Modern Physics Letters B, \textbf{22}, 231--241 (2008)
\bibitem{arico} Aric\`{o}, A., Serra--Capizzano, S., Tasche, M.:
Fast and numerically stable algorithms for discrete Hartley 
transforms and applications to preconditioning. 
Comm. Information Systems \textbf{6}, 21--68 (2005)
\bibitem{1} Badeau, R., Boyer, R.: Fast 
multilinear singular value decomposition for structured
tensors. SIAM J. Matrix Anal. Appl. 
\textbf{30}, 1008--1021 (2008)
%\bibitem{BAMIEH} NON CITATO Bamieh, B.: Discovering transforms: 
%a tutorial on circulant matrices, circular convolution, and  
%the Discrete Fourier Transform. (2020) arXiv:1805.05533v2
\bibitem{bergland} Bergland, G. D.: A radix-eight Fast Fourier
Transform Subroutine for real-valued series. IEEE Trans.
on Audio and Electroacoustics \textbf{17}, 138--144 (1969)
\bibitem{bertaccinicirc} Bertaccini, D.: 
The spectrum of circulant-like preconditioners for some 
general linear multistep formulas for linear boundary
value problems. 
SIAM J. Numer. Anal. \textbf{40}
%Vol. 40, No. 5, pp. 
1798--1822 (2002)
\bibitem{bertacciniomega} Bertaccini, D., Ng, M. K.:
Block $\omega$-circulant preconditioners
for the systems of differential equations.
Calcolo \textbf{40}, 71--90 (2003)
Daniele Bertaccini1, Michael K. Ng2
\bibitem{BINIDIBENEDETTO} Bini, D., Di Benedetto, F.:
A new preconditioner for the parallel solution
of positive definite Toeplitz systems. In: Second Ann. Symp.
Parallel Algorithms and Architecture, Crete, Greece, 220--223 
(1990)
\bibitem{BINIFAVATI1993} Bini, D., Favati, P.: On a matrix algebra 
related to the discrete Hartley transform. SIAM J. Matrix Anal. Appl.
%14 (2) 500--507 (1993)
\textbf{14}, 500--507 (1993)
\bibitem{bgm}
Boccuto, A., Gerace, I., Martinelli, F.: Half-quadratic image restoration with a non-parallelism
constraint, J. Math. Imaging Vis. 
\textbf{59}, 270--295 (2017)
\bibitem{bgp}
Boccuto, A., Gerace, I., Pucci, P.: Convex Approximation Technique for Interacting
Line Elements Deblurring: A New Approach. J. Math. Imaging Vis. 
\textbf{44}, 168--184 (2012)
\bibitem{bortol} Bortoletti, A., Di Fiore, C.:
On a set of matrix algebras related to discrete
Hartley-type transforms. Linear Algebra Appl.
\textbf{366}, 65--85 (2003)
\bibitem{BOSE} Bose, A., Saha, K.: Random 
circulant matrices. CRC Press, Taylor \& Francis Group, 
Boca Raton-London-New York (2019)
\bibitem{carras} 
Carrasquinha, E., Amado, C., Pires, A. M., 
Oliveira, L.: Image reconstruction based on circulant matrices.
Signal Processing: Image Communication
\textbf{63} 72--80 (2018)
\bibitem{CHAN} Chan, R.: The spectrum of a family of circulant 
preconditioned Toeplitz systems. {SIAM J. Numer. Anal.}
\textbf{26}, 503--506 (1989) 
\bibitem{CHANetal} Chan, R. H., Jin, X.-Q., Yeung, M.-C.:
The Spectra of Super-Optimal Circulant Preconditioned Toeplitz Systems.
%Author(s): Raymond H. Chan, Xiao-Qing Jin and Man-Chung Yeung
{SIAM J. Numer. Anal.} \textbf{28},
%(3), 
871--879 (1991)
\bibitem{CHANSTRANG} Chan, R. H., Strang, G.: 
Toeplitz equations by conjugate gradients 
with circulant preconditioner. SIAM J. Sci. Stat. Comput.
\textbf{10}, 104--119 (1989)
\bibitem{CGV} Codenotti, B., Gerace, I., Vigna, S.:
Hardness results and spectral techniques for combinatorial
problems on circulant graphs.
{Linear Algebra Appl.} 
\textbf{285}, 123--142 (1998)
%\bibitem{CT} NON CITATO
%Cooley, J. W., Tukey, J. W.: An algorithm for 
%the machine calculation of complex Fourier series. 
%Math. Comput. \textbf{19}, 
%%(90), 
%297--301 (1965) 
\bibitem{DAVIS} Davis, P. J.: Circulant matrices. 
John Wiley \& Sons, New York (1979)
\bibitem{dellacqua} Dell'Acqua, P., Donatelli, M., Serra--Capizzano, S., Sesana, D., Tablino--Possio, C.: Optimal preconditioning for image deblurring with anti--reflective boundary conditions. Linear Algebra Appl. \textbf{502}, 159--185 (2016).
\bibitem{DFZ} Di Fiore, C., Zellini, P.: Matrix algebras 
in optimal preconditioning. Linear Algebra Appl. 
\textbf{335}, 1--54 (2001) 
\bibitem{4} Ding, W., Qi, L., Wei, Y.: 
Fast Hankel tensor-vector product and applications to
exponential data fitting. Numerical Linear Algebra with Appl.
\textbf{22}, 814--832 (2015)
\bibitem{DISCEPOLI} Discepoli, M., Gerace, I., 
Mariani, R., Remigi, A.: 
A Spectral Technique to Solve the Chromatic Number Problem in Circulant Graphs. In: Computational Science and its 
Applications - International Conference on Computational Science and Its Applications 2004, Lecture Notes in Computer 
Sciences \textbf{3045}, 745-754 (2004)
\bibitem{donatelliserra} Donatelli, M., Serra--Capizzano, S.: Antireflective boundary conditions for deblurring problems. J. Electr. Comput. Eng., Art. ID 241467, 18 pp. (2010)
\bibitem{duhamel} Duhamel, P., Vetterli, M.:
Improved Fourier and Hartley transform algorithms:
Application to cyclic convolution of real data. IEEE
Trans. Acoustics, Speech and Signal Processing \textbf{35},
818--824 (1987)
\bibitem{DYKES} Dykes, L., Noschese, S., Reichel, L.:
Circulant preconditioners for discrete 
ill-posed Toeplitz systems.
Numer. Algor. \textbf{75}, 477--490 (2017)
%\bibitem{ERICKSON} NON CITATO
%Erickson, J.: Algorithms.
%University of Illinois, Urbana-Champaign (2019)
\bibitem{ersoy} Ersoy, O.: Real Discrete Fourier Transform.
IEEE Trans. Acoustics, Speech and Signal Processing \textbf{33},
880--882 (1985)
\bibitem{evans} Evans, D. J., Okolie, S. O.:
Circulant matrix methods for the numerical solution of partial 
differential equations by FFT convolutions
J. Computational Appl. Math. \textbf{8}, 
%no 4, 
238--241 (1982) 
\bibitem{FISCHER} Fischer, R., Huckle, T.: 
Using $\omega$-circulant matrices for the
preconditioning of Toeplitz systems.
Sel\c{c}uk J. Appl. Math. \textbf{4}
%Vol. 4, No. 2, pp. 
71--88 (2003)
\bibitem{gilmour} Gilmour, A. E.: Circulant matrix methods for
the numerical solution of
partial differential equations
by FFT convolutions.
Appl. Math. Modelling \textbf{12}, 44--50 (1988)
\bibitem{geracegreco} Gerace, I., Greco, F.: 
The Travelling Salesman Problem in symmetric circulant matrices with two stripes.
Mathematical Ctructures in Computer Science, \textbf{18}, Special Issue 1: In memory of Sauro Tulipani, 165--175 (2008)
\bibitem{gpcdm} Gerace, I., Pucci, P., Ceccarelli, N., Discepoli, M.,
Mariani, R.: 
A Preconditioned Finite Element Method 
for the $p$-Laplacian Parabolic Equation. Appl. Numer. Anal.  Computational Math. \textbf{1}, 155--164 (2004)
\bibitem{gg} Greco, F., Gerace, I.: The Traveling Salesman Problem 
in Circulant Weighted Graphs With Two Stripes.
Electronic Notes in Theoretical Computer Science
\textbf{169}, 99--109 (2007)
\bibitem{gupta} Gupta, A., Rao, K. R.: A fast
recursive algorithm for the discrete sine transform. IEEE
Trans. Acoustics, Speech and Signal Processing \textbf{38},
553--557 (1990)
\bibitem{gutekunst} Gutekunst, S. C., Williamson, D. P.:
Characterizing the Integrality Gap of the Subtour LP for the Circulant Traveling Salesman Problem.
SIAM J. Discrete Math. \textbf{33}, 2452--2478 (2019)
%Samuel C. Gutekunst and David P. Williamson
\bibitem{gutekunst2021} Gutekunst, S. C., Jin, B., 
Williamson, D. P.:
The Two-Stripe Symmetric Circulant TSP is in P.
https://people.orie.cornell.edu/dpw/2stripe.pdf
(2021)
\bibitem{gyori}  Gy\H{o}ri, I., Horv\'{a}th, L.:
Utilization of Circulant Matrix Theory in Periodic Autonomous 
Difference Equations. Int. J. Difference Equations \textbf{9},
%ISSN 0973-6069, Volume 9, Number 2, 
163--185 (2014)
\bibitem{gyori2}  Gy\H{o}ri, I., Horv\'{a}th, L.:
Existence of periodic solutions in a linear higher order system of difference equations. Computers and Math. with Appl. \textbf{66}, 2239--2250 (2013)
%Volume 66, Issue 11, December 2013, Pages 2239-2250
\bibitem{heinig} Heinig, G., Rost, K.: Hartley 
transform representations of symmetric Toeplitz 
matrix inverses with application to fast matrix-vector
multiplication. SIAM J. Matrix Anal. Appl. 
\textbf{22}, 86--105 (2000)
\bibitem{henriques} Henriques, J. F.: 
Circulant structures in computer vision. Ph. D. thesis., Department of Electrical and Computer Engineering,
Faculty of Science and Technology,
Coimbra (2015)
\bibitem{CINESI} Lei, Y. J., Xu, W. R., Lu, Y., Niu, Y. R.,
Gu, X. M.: On the symmetric doubly stochastic inverse 
eigenvalue problem. Linear Algebra Appl. 
\textbf{445}, 181--205 (2014) 
\bibitem{xiao} Lv, X.-G., Huang, T.-Z., Ren, Z.-G.:
A modified T. Chan's preconditioner for Toeplitz systems.
Computers and Math. with Appl. \textbf{58}, 693--699 (2009)
\bibitem{martens} Martens, J.--B.: Discrete 
Fourier Transform algorithms for real valued sequences.
IEEE Trans. Acoustics, Speech and Signal Processing \textbf{32},
390--396 (1984)
%\bibitem{MARTINEZ} NON CITATO
%Mart\'{i}nez-Avenda\~{n}o, R. A., 
%Rios-Cangas, J. I.:
%Inner products on the space of complex 
%square matrices. Linear Algebra Appl. \textbf{439}, 
%3620--3637 (2013)
\bibitem{murakami} 
Murakami, H.: Real-valued fast discrete Fourier transform and cyclic convolution algorithms of highly composite even length. Proc. of 
the international Conference on Acoustics, Speech, and Signal Processing  (ICASSP) \textbf{3}, 1311--1314 (1996)
\bibitem{osele} Oseledets, I., Tyrtyshnikov, E.:
A unifying approach to the construction 
of circulant preconditioners. Linear Algebra Appl. \textbf{418}, 
435--449 (2006).
\bibitem{16} Papy, J. M., De Lauauer, L., Van Huffel, S.:
Exponential data fitting using multilinear algebra: 
The single-channel and the multi-channel case.
Numerical Linear Algebra with Appl.
\textbf{12}, 809--826 (2005)
\bibitem{18} Qi, L.: Hankel tensors: Associated Hankel
matrices and Vandermonde decomposition.
Commun. Math. Sci. \textbf{13}, 113--125 (2015)
%\bibitem{ROJO} NON CITATO
%Rojo, O., Rojo, H.: Some results on
%symmetric circulant matrices and on symmetric 
%centrosymmetric matrices. Linear Algebra Appl. 
%\textbf{392}, 211--233 (2004)
%\bibitem{ROXBURG} NON CITATO
%Roxburg, A.: On Computing the Discrete Fourier Transform.
%In: Individual Investigations: Electrical and Computer 
%Engineering \textbf{55}, 
%%(198), 
%University of Iowa, Iowa City
%(2013)
\bibitem{SCARABOTTI} Scarabotti, F.: The discrete 
sine transform and the spectrum of the finite 
$q$-ary tree. SIAM J. Discrete Math. \textbf{19}, 1004-1010 (2006)
\bibitem{SC1} Serra-Capizzano, S., Sesana, D.: 
A note on the eigenvalues of $g$-circulants (and 
of $g$-Toeplitz, $g$-Hankel matrices). 
Calcolo \textbf{51}, 639--659 (2014)
\bibitem{strang} Strang, G.: The Discrete Cosine Transform.
SIAM Review \textbf{41}, 135--147 (1999)
\bibitem{TEE} Tee, G. J.: Eigenvectors of block circulant 
and alternating circulant matrices.
New Zealand J. Math. \textbf{36}, 195--211 (2007)
\bibitem{WILKINSON} Wilkinson, J.: The algebraic eigenvalue problem. Clarendon Press, Oxford (1965)
%\bibitem{WINOGRAD} Winograd, S.: On Computing 
%the Discrete Fourier Transform. Math. Computation \textbf{32},
%% (141), 
%175--199 (1978)
\end{thebibliography}

% Non-BibTeX users please use
\end{document}